\newtheorem{theorem}{Theorem}[section]
\newtheorem{lemma}[theorem]{Lemma}
\newtheorem{thm}[theorem]{Theorem}
\newtheorem{define}[theorem]{Definition}
\newtheorem{assume}[theorem]{Assumption}
\newtheorem{example}[theorem]{Example}
\newtheorem{remark}[theorem]{Remark}
\newcommand{\refdef}[1]{{Definition~\ref{#1}}}
\newcommand{\refapp}[1]{{Appendix~\ref{#1}}}
\newcommand{\refthm}[1]{{Theorem~\ref{#1}}}
\newcommand{\reflem}[1]{{Lemma~\ref{#1}}}
\newcommand{\refass}[1]{{Assumption~\ref{#1}}}
\newcommand{\pr}{\mathbb P}
\newcommand{\ex}{\operatorname{\mathbb E}}
\DeclareMathOperator{\var}{var}
\DeclareMathOperator{\cov}{cov}
\newcommand{\der}{\mathrm d}
\newcommand{\mc}{\mathcal}
\newcommand{\ind}{\operatorname{\mathds{1}}}
\newcommand{\mb}{\mathbf}
\newcommand{\del}{\partial}
\newcommand{\bs}{\boldsymbol}
\newcommand{\nn}{\nonumber}
\newcommand{\q}{\quad}
\newcommand{\mcS}{\mc{S}}
\newcommand{\mcP}{\mc{P}}
\newcommand{\mcN}{\mc{N}}
\newcommand{\mcT}{\mc{T}}
\newcommand{\wh}{\widehat}
\newcommand{\wt}{\widetilde}
\newcommand{\Rd}{\mathbb{R}^d}
\newcommand{\rmax}{r_{\textrm{max}}}
\DeclareMathOperator{\fin}{Fin}
\DeclareMathOperator{\vol}{vol}
\DeclareMathOperator{\bbQ}{\mathbb Q}
\DeclareMathOperator{\diam}{diam}
\def\bc{{\mathbf{c}}}
\def\cI{{\mathcal I}}
\def\sec{\setcounter{equation}{0}} 
\newcommand{\robert}[1]{{#1}}
\newcommand{\efe}[1]{{#1}}
\begin{document}
\begin{frontmatter}

\title{Functional Central Limit Theorems for  Local Statistics of Spatial Birth-Death Processes in the Thermodynamic Regime}

\runtitle{FCLTs for Spatial Birth-Death Processes}

\begin{aug}
\author[A]{\fnms{Efe} \snm{Onaran}\ead[label=e1, mark]{efeonaran@campus.technion.ac.il}},
\author[A,B]{\fnms{Omer} \snm{Bobrowski}\ead[label=e2,mark]{omer@ee.technion.ac.il}}
\and
\author[A]{\fnms{Robert J.} \snm{Adler}\ead[label=e3,mark]{radler@technion.ac.il}}
\address[A]{Viterbi Faculty of Electrical and Computer Engineering,
Technion--Israel Institute of Technology\\\printead{e1,e2,e3}
}

\address[B]{School of Mathematical Sciences,
Queen Mary University of London}
\end{aug}

\begin{abstract}
We present normal approximation results at the process level for local functionals defined on dynamic Poisson  processes in $\mathbb{R}^d$. The dynamics we study here are those of a Markov birth-death process. We prove functional limit theorems in the so-called thermodynamic regime. Our results are applicable to several functionals of interest in the stochastic geometry literature, including subgraph and component counts in the random geometric graphs.

%


\end{abstract}

\begin{keyword}[class=MSC]
\kwd[Primary ]{60G55, 60F05}
\kwd[; secondary ]{60D05, 05C80}
\end{keyword}

\begin{keyword}
\kwd{Spatial birth-death process}
\kwd{functional central limit theorems}
\kwd{Ornstein-Uhlenbeck process}
\kwd{random geometric graphs}
\end{keyword}

\end{frontmatter}

\section{Introduction}
\sec

 Spatial birth-death processes are continuous time Markov chains with state spaces of counting measures in a metric space, typically $\Rd$. They have found extensive use in the stochastic simulation of applications where particles sporadically appear and disappear at random times \cite{MR2004226}. The first rigorous mathematical definition and treatment of these processes appeared in \cite{preston1975spatial}.

 Random geometric graphs, on the other hand, are static models generated by points randomly scattered in Euclidian space, for which pairs of points in close proximity are assumed to create  edges. The statistical properties of local, \efe{non-negative} functionals defined on random geometric graphs, such as numbers of edges or subgraphs isomorphic to a given graph, along with the techniques to derive them, are generally well understood \cite{MR1986198,MR1878288}. 
 
 In this paper, we establish \emph{functional} central limit theorems for a broad class of  local, \efe{non-negative} functionals defined on subsets of points in spatial birth-death processes. The functionals are parametrized by the size of their \emph{local support}, and our focus in this paper is on the regime where the expected number of particles in shrinking supports converges to a constant as the overall density of particles goes to infinity. This is generally called the \emph{thermodynamic regime} in the random geometric graph literature. We prove that subject to mild assumptions,  normalized sums of these spatial functionals (summed over all subsets of the underlying particles alive at a given time) converge weakly to weighted superpositions of independent Ornstein-Uhlenbeck processes. In particular, this superposition is infinite when the functionals satisfy a certain neighbourhood vacancy condition, an example of which is the isolated subgraph (\emph{component}) count. Further examples of practical interest  will be given as special cases at the end of Section \ref{sec2}.
 
As for our methods, we rely heavily on the modern theory of normal approximation for stabilizing functionals. Historically, central limit theorems in stochastic geometry have been based either on normal approximation theory for martingale differences \cite{MR1986198} or on the classical Stein method \cite{MR2201885}. Recent developments for Poisson space functionals, fusing the Malliavin calculus and the Stein method \cite{MR3520016} led to normal approximation theorems easily applicable to point processes in more general spaces, while also providing improved convergence rates \cite{lachieze2019normal}. We extend the applications of this new theory by modelling the temporally evolving spatial birth-death process as a static \emph{marked} point process. We employ the marked point process representation also to calculate the limiting covariance and bounds on the higher moments needed to establish the limit theorems at the process level. 

Our results echo other findings in related areas. Central limit theorems for functionals over dynamic complexes were considered in \cite{Thoppe2016OnComplexes}, where the authors showed that the normalized number of $k$-cliques in a dynamic Erdős-Rényi graph with $n$ vertices 
 converges weakly to an Ornstein-Uhlenbeck process. Consequently, similar CLTs apply to the topological invariants (e.g. Betti numbers) of the corresponding random flag complex.
In \cite{Owada2021Limit} these results were extended from the random flag complex to the generic, multi-parameter, random simplicial complex model introduced in \cite{Costa2017}.
 In the geometric setting, \cite{owada2017functional} considers the extreme value distribution of subgraph counts in random geometric graphs as a process of varying proximity threshold. The normalized subgraph count in \cite{owada2017functional} was proved to converge weakly to a superposition of a new class of Gaussian processes in an extreme-value regime that is equivalent to thermodynamic regime we study here. These results were later extended to Betti numbers in \cite{MR3847974}, complementing the earlier central limit theorem in \cite{Yogeshwaran2014RandomRegime} for the static case. Interestingly, infinite superpositions of Gaussian processes show up in the limit theorems in \cite{MR3847974} as well. 
  

Finally, we note that functional CLTs for spatial birth-death processes  were obtained in related scenarios, although for considerably different settings.
In \cite{qi}, similar birth-death processes are studied, with more general rate functions, albeit for functionals that can only be applied to \emph{single points}. In \cite{penrose_continuum}, generic local functionals of point processes are studied, using the martingale central limit theorem \cite{penrose_2001}. However, the birth-death dynamics there are different than here in two important ways. Firstly, the dynamics there require the local jump rates to be \emph{uniformly bounded}, which fails for dynamic Poisson processes with independent  death rates, as the local number of points is unbounded. Secondly, the model assumptions in \cite{penrose_2001} make it applicable only to scenarios with spatially uniform point density. 
More recently, functionals of random tessellations of the pure-birth processes (the ``dead leaves'' model) were shown to converge to Ornstein-Uhlenbeck process in \cite{penrose_leaves}.

\robert{In Section \ref{sec2} we turn to the precise formulations of our results, and more carefully describe the  applications. Section \ref{sec3} provides preliminaries to be used in the proofs. Finally, Sections \ref{proof1} and \ref{proof2} will provide the proofs for Theorems \ref{OU} and \ref{OU2}, respectively.}

\section{Main Results}
\label{sec2}
\sec 

The point process model that we study in this paper is as follows. Let $\eta_n(\cdot)$ be a continuous-time Markov process on the state space $\fin(\Rd)$ consisting of all finite subsets of $\Rd$. The infinitesimal generator of $\eta_n$, denoted by  $L$, is defined via
\begin{equation}\label{eqn:generator}
Lh(\eta) := n \int_{\Rd} D_x h(\eta) \bbQ(dx) -  \sum_{y\in \eta} D_y h(\eta\backslash \{y\}),\quad \eta\in \fin(\Rd),
\end{equation}
where $h:\fin(\Rd)\to\mathbb{R}$ is a bounded, real valued, function,  $D_x$ is the add-one cost operator
\[
    D_xh(\eta)\coloneqq h(\eta\cup\{x\}) - h(\eta), 
\] 
and 
 $\bbQ$ is a probability measure on $\Rd$, with density $q$.
We will assume that $q$ is bounded and  continuous almost everywhere, and
\begin{align*}
    \|q\|_{\infty}\coloneqq \sup_{x\in\Rd} q(x)<\infty.
\end{align*}

In  words, the probability for a single point to be added to the process (or a ``birth'') during the time interval $[t,t+\Delta t]$ and inside a Borel set $A\subset \Rd$ is $n\bbQ(A)\Delta t + o(\Delta t)$. The probability that an existing point is removed from the process (a ``death'') during such an interval is $\Delta t + o(\Delta t)$. All other possible changes occur with probability $o(\Delta t)$. In addition,  changes that occur in disjoint time intervals are independent.

These processes are known as spatial birth-death processes with constant birth and death rates. 
We refer the reader to \cite{MR2371524} for a comprehensive theory of spatial point processes. An important fact which will be essential in this paper is that the stationary distribution for the Markov process defined by \eqref{eqn:generator} is that of a Poisson point process in $\Rd$, with intensity measure $n\bbQ$ (see 
 \cite{preston1975spatial,van2000markov}). Throughout, we will assume that $\eta_n(0)$ has this stationary distribution, from which it follows that $\eta_n$ is stationary on $[0,\infty)$.
 
Next, we define the family of functionals  of interest to us.
Let  $\xi_{k,r}:\fin(\Rd)\to \mathbb{R}^+\coloneqq [0,\infty)$ be a \efe{non-negative} subset functional parametrized by $r> 0$, such that $\xi_{k,r}(\mcS) = 0$ if $|\mcS| \ne k$, where
 $|\mcS|$ denotes the number of points in $\mcS$.
 
 We assume that $\xi_{k,r}$ satisfies the following assumptions. 

\begin{assume}[Translation and scale invariance]\label{inv}
For any $\mcS\in \fin(\Rd)$, $\alpha>0$, $x\in\Rd$, and $r>0$
\begin{align*}
    \xi_{k,r}(\alpha \mcS+x) = \xi_{k, r/\alpha}( \mcS),
\end{align*} 
where set addition and scalar multiplication are defined in the natural way.

\end{assume}

\begin{assume}[Localization]\label{localass}
There exists a constant $r_{\textrm{max}}>0$ such that $\xi_{k,1}(\mcS) = 0$ if the diameter of the set satisfies $\diam(\mcS)> \rmax$.
\end{assume}

\begin{assume}[Boundedness]\label{boundedass}
The function $\xi_{k,1}$ satisfies
\[
\|\xi_{k,1}\|_{\infty} \coloneqq \sup\limits_{\mcS\in \fin(\Rd)} |\xi_{k,1}(\mcS)|<\infty.\]
\end{assume}
Lastly, to avoid trivialities, we make the following \emph{feasibility} assumption on $\xi_{k,1}$.

\efe{\begin{assume}[Feasibility] \label{feas} There exists a nonempty open set $\mcN \subset (\mathbb R^d)^{k-1}$, such that
the function $\xi_{k,1}$ satisfies for all $\mb{y}\in \mcN$,
    \begin{equation}
     \xi_{k,1}(0, \mb y) >0.  
     \label{feaseq0}
    \end{equation}
\end{assume}}

Recall  that  $\xi_{k,r}$ was originally defined as a function whose domain is $\fin(\Rd)$. However, in Assumption \ref{feas} it is treated as a function on $(\mathbb{R}^d)^k$, invariant under permutations among $d$-dimensional partitions. We shall use this minor notational ambiguity throughout the paper. The meaning will always be clear from the context, and it will save us redundant notation.

Given a functional $\xi_{k,r}$  satisfying  Assumptions \ref{inv}-\ref{feas}, we will be interested in two types of processes, both constructed via the Markov process $\eta_n$ defined above. The first one is defined as
\[
     f_{n}(t) \coloneqq \sum_{\mcS \subset \eta_n(t)} \xi_{k,r} (\mcS), 
\]
which is merely the sum of the functional over all subsets. Before we define the latter, we assume that $N_r(\mcS)$ are Lebesgue measurable sets satisfying the following.
\begin{assume}[Affine invariance]\label{linF}
For any $\mcS\in \fin(\Rd)$, $\alpha,r>0$, and $x\in\Rd$,
\begin{align*}
     N_{\alpha r}(\alpha \mcS +x) = \alpha N_{r}( \mcS ) +x.
 \end{align*}
\end{assume}
\begin{assume}[Localization]\label{loc2}
There exists $\beta_k>0$ such that 
 for all $\mcS\in \fin(\Rd)$, with $|\mcS| = k$, 
 \begin{align*}
     \diam\big(N_r(\mcS)\big)&\leq  \beta_k\max\big(r,\diam(\mcS)\big).
 \end{align*}
\end{assume}

\efe{\begin{assume}[Joint feasibility] \label{jointfeas} There exists a nonempty open set  $\tilde{\mcN}\subset (\Rd)^{k-1}$, such that
the functions $\xi_{k,1}$ and $N_{1}$ satisfy for all $\mb{y}\in \tilde{\mcN}$,
    \begin{equation}
     \xi_{k,1}(0, \mb{y})\vol\left[N_1(0,\mb{y})\right] >0,   
     \label{feaseq}
    \end{equation}
where $\vol$ denotes  Lebesgue measure.  
\end{assume}}

With these assumptions, we define the second process of interest to us as
\begin{equation*}
  F_{n}(t) \coloneqq \sum_{\mcS \subset \eta_n(t)} \xi_{k,r} (\mcS) \ind\big\{ (\eta_n(t)\setminus\mcS)\cap N_r(\mcS)=\varnothing \big\},
\end{equation*}
which is an \emph{exclusive} sum where we require the neighborhood $N_r(\mcS)$ to be empty.


The limit theorems we prove in this paper will in fact be applied to the normalized versions of the processes $f_n(t)$ and $F_n(t)$, given as
\[
\bar{f}_{n}(t) \coloneqq \frac{{f}_{n}(t) - \ex[{f}_{n}(t)] }{\sqrt{\text{var} [f_{n}(t)]} }, 
\quad \text{and}\quad 
\bar{F}_{n}(t) \coloneqq \frac{{F}_{n}(t) - \ex[{F}_{n}(t)] }{\sqrt{\text{var} [F_{n}(t)]} }, 
\]
where we implicitly assume the Poisson-Markov model for $\eta_n$ described at the beginning of this section.

In order to state our results succinctly, we use $\{\mc{U}_{\kappa}(t):t\geq 0\}$, for some $\kappa>0$, to denote the stationary, Gaussian, zero mean, Ornstein-Uhlenbeck (OU) process with  covariance function $$\text{cov}[\mc{U}_{\kappa}(t_1), \mc{U}_{\kappa}(t_2)] = e^{-\kappa|t_1-t_2|}.$$
For a given sequence $\bc = (c_1,c_2,\ldots)$ we define the following process
\[
\mc{U}_{\bc}(t) := \sum_{j=1}^\infty c_j\mc{U}_j(t),
\]
 a weighted superposition  of \emph{independent} OU processes. By $\|\bc\|$ we refer to the $\ell^2$ norm.  
 

\begin{thm}\label{OU}
 Suppose that $\lim_{n\to\infty} nr^d = \gamma \in (0,\infty)$ \efe{and that Assumptions \ref{inv}-\ref{feas} on $\xi_{k,r}$ hold}. Then there exists $\bc$ (depending on $\bbQ,k,\xi_{k,1}$ and $\gamma$), satisfying $c_j> 0$ for all $1\le j \le k$, $c_j = 0$ for all $j>k$,  and  $\|\bc\| = 1$,  such that
 the processes $\{\bar{f}_{n}(t):t\geq 0\}$ converge weakly to $\{\mc{U}_{\bc}(t) :t\geq 0\}$. 
\end{thm}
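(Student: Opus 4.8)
The plan is to establish weak convergence in the Skorokhod space $D[0,\infty)$ through the two classical ingredients: convergence of the finite-dimensional distributions to the Gaussian law with covariance $\cov[\mathcal U_{\bc}(s),\mathcal U_{\bc}(t)]=\sum_{j=1}^k c_j^2 e^{-j|s-t|}$, and tightness. The first move is to turn the dynamic problem into a static one. I would represent $\eta_n$ as a marked Poisson process on $\Rd\times\mathbb R\times[0,\infty)$ in which a point $x$ carries the mark $(b,\ell)$ recording its birth time $b$ and its $\mathrm{Exp}(1)$ lifetime $\ell$; by the marking theorem this is Poisson with intensity $n q(x)\,dx\,db\,e^{-\ell}\,d\ell$, the point lies in $\eta_n(t)$ precisely when $b\le t\le b+\ell$, and the stationarity assumed for $\eta_n$ is inherited. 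Under this representation the whole vector $(f_n(t_1),\dots,f_n(t_m))$ is a vector-valued $U$-statistic of a single Poisson process, which brings the Malliavin--Stein normal approximation machinery recalled in \refsec{sec3} to bear.

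For the finite-dimensional distributions I would first compute the limiting covariance. Applying the Mecke formula to $\cov[f_n(t_1),f_n(t_2)]$ and grouping terms by the number $j\in\{1,\dots,k\}$ of space--time points shared between a $k$-subset counted at $t_1$ and one counted at $t_2$, the essential observation is that, by the memorylessness of the exponential lifetime, a point alive at $t_1$ is still alive at $t_2$ with probability $e^{-|t_1-t_2|}$, so the intensity of points alive at both times is $nq(x)e^{-|t_1-t_2|}$ and a $j$-overlap carries the factor $e^{-j|t_1-t_2|}$. Scale invariance (\refass{inv}) together with localization (\refass{localass}) confines each subset to a cluster of diameter $O(r)$, so each overlap class contributes at the common order $n\gamma^{\,2k-j-1}$; writing the $j$-term as $A_j e^{-j|t_1-t_2|}\,n(1+o(1))$ gives $\cov[\bar f_n(t_1),\bar f_n(t_2)]\to\sum_{j=1}^k c_j^2 e^{-j|t_1-t_2|}$ with $c_j^2=A_j/\sum_{i=1}^k A_i$, which is exactly the covariance of $\mathcal U_{\bc}$ and forces $\|\bc\|=1$. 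Feasibility (\refass{feas}) is what makes every $A_j>0$, hence $c_j>0$ for $1\le j\le k$, while the fact that two $k$-subsets share at most $k$ points forces $c_j=0$ for $j>k$.

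It then remains to show the normal-approximation bound itself vanishes and to establish tightness. For the bound, localization endows the summands with a deterministic radius of stabilization of order $r$ and boundedness (\refass{boundedass}) controls their magnitude, so the difference-operator and fourth-order terms in the multivariate bound are $O(n^{-1/2})$ and tend to zero; combined with the covariance limit (and the strict positive-definiteness of the limiting covariance matrix, inherited from that of the OU kernels) this yields joint Gaussian convergence, either directly or through Cram\'er--Wold. For tightness in $D[0,T]$, for every $T$, I would prove an increment estimate of the form $\ex\big[(\bar f_n(t)-\bar f_n(t_1))^2(\bar f_n(t_2)-\bar f_n(t))^2\big]\le C(t_2-t_1)^2$ for $t_1\le t\le t_2$ and invoke the standard moment criterion for Skorokhod tightness; the scale is set by $\var[\bar f_n(t_2)-\bar f_n(t_1)]=2-2\cov[\bar f_n(t_2),\bar f_n(t_1)]=O(t_2-t_1)$, and the fourth-order bound comes from a Mecke expansion parallel to the covariance computation, again using locality and boundedness. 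Since individual jumps of $\bar f_n$ are $O(n^{-1/2})\to0$, the limit is continuous, consistent with $\mathcal U_{\bc}$.

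I expect the tightness estimate to be the main obstacle: unlike the covariance, it demands fourth-moment control of increments that is \emph{uniform in $n$} over arbitrarily short time windows, so one must track how the births and deaths occurring in $(t_1,t_2]$ simultaneously perturb all the overlapping $k$-subsets, and then show the resulting multi-point integrals genuinely carry the extra power of $(t_2-t_1)$ required by the Chentsov/Billingsley criterion. A secondary difficulty is verifying the hypotheses of the Malliavin--Stein bound on the non-compact marked space, where the number of points per bounded region is unbounded; this is precisely where the shrinking support guaranteed by \refass{localass} does the essential work.
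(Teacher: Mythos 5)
Your proposal follows essentially the same route as the paper: the marked Poisson representation with birth/lifetime marks, Mecke-formula covariance computation grouped by overlap size $j$ (with the $e^{-j|t_1-t_2|}$ factors from exponential memorylessness and the orders $n\gamma^{2k-j-1}$ exactly as in \reflem{covgen}), the Malliavin--Stein stabilization CLT with deterministic radius $O(r)$ plus Cram\'er--Wold for the finite-dimensional distributions, and the fourth-moment increment bound $Ch^2$ for Skorokhod tightness, which is precisely condition (C2) of \refthm{mainthm2}. You also correctly single out the hardest step — extracting the extra power of $h$ from the joint birth/death perturbations — which the paper resolves via the Taylor argument $\Phi_{\cI}(0)=\Phi_{\cI}'(0)=0$ in \reflem{lem:Phi}.
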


\begin{thm}\label{OU2}
 Suppose that $\lim_{n\to\infty} nr^d = \gamma \in (0,\infty)$ and \efe{that Assumptions \ref{inv}-\ref{jointfeas} on $\xi_{k,r}$ and $N_r$ hold}. Then there exists $\bar\bc$ (depending on $\bbQ,k,\xi_{k,1},N_1$ and $\gamma$), satisfying $\bar c_j>0$ for all $j$, and $\|\bar\bc\|=1$, such that 
 the processes $\{\bar{F}_{n}(t):t\geq 0\}$, converge weakly to $\{\mc{U}_{\bar\bc}(t):t\geq 0\}$. Furthermore, the limiting process has almost surely continuous paths. 
\end{thm}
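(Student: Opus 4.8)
The plan is to prove weak convergence in the usual two stages — convergence of the finite-dimensional distributions together with tightness — and then to read off the almost sure continuity of the limit from the structure of its covariance. Throughout I would exploit the marked point process representation alluded to in the introduction: the whole trajectory $\{\eta_n(t):t\ge 0\}$ is encoded as a \emph{static} Poisson process on $\Rd\times E$, where each spatial point carries a mark recording its birth time and its (exponentially distributed, mean one) lifetime, so that $\eta_n(t)$ is exactly the set of points whose lifetime interval contains $t$. Under this representation the exclusive sum $F_n(t)$ — despite the vacancy indicator — is a genuine functional of a marked Poisson process, and its add-one cost operators can be written down explicitly, which is what lets me invoke the Malliavin--Stein quantitative normal approximation bounds for Poisson functionals from \cite{lachieze2019normal,MR3520016}.

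For the finite-dimensional distributions I would fix times $t_1<\dots<t_m$ and, by the Cramér--Wold device, study an arbitrary linear combination $\sum_{i=1}^m a_i\bar F_n(t_i)$, which is again a normalized sum of a local, stabilizing functional of the marked process. Assumptions \ref{localass} and \ref{loc2} provide the spatial localization (bounded range of both $\xi_{k,1}$ and $N_1$) that yields the stabilization and moment bounds needed to force the fourth-moment / second-order Poincaré terms in the Malliavin--Stein bound to vanish as $n\to\infty$, giving asymptotic normality of the vector $(\bar F_n(t_1),\dots,\bar F_n(t_m))$.

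The heart of the matter is the limiting covariance. Using the multivariate Mecke formula on the marked process I would expand $\cov[F_n(s),F_n(t)]$ for $s<t$ and organize the contributions by the number $j$ of points common to, and surviving between, the two configurations; each surviving point contributes a factor $e^{-(t-s)}$ through its exponential lifetime. The novelty relative to \refthm{OU} is the vacancy indicator: requiring $N_r(\mcS)$ to be empty at \emph{both} times contributes, after thinning, a factor $\exp(\,\lambda\, e^{-(t-s)})$ whose exponent is proportional to $e^{-(t-s)}$, and expanding $\exp(\lambda e^{-(t-s)})-1$ as a power series in $e^{-(t-s)}$ produces a term $e^{-j(t-s)}$ for \emph{every} integer $j\ge 1$. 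Collecting the core and vacancy contributions I would obtain
\[
\lim_{n\to\infty}\cov[\bar F_n(s),\bar F_n(t)]=\sum_{j=1}^{\infty}\bar c_j^{\,2}\,e^{-j|s-t|},
\]
which is precisely $\cov[\mc U_{\bar\bc}(s),\mc U_{\bar\bc}(t)]$; strict positivity of every $\bar c_j$ follows from the joint feasibility Assumption \ref{jointfeas}, while setting $s=t$ gives $\|\bar\bc\|^2=1$ by the variance normalization.

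Finally, for tightness and the continuity claim I would establish a Kolmogorov--Chentsov type increment bound $\ex[|\bar F_n(t)-\bar F_n(s)|^{2p}]\le C|t-s|^{1+\delta}$ uniformly in $n$, again through moment estimates for the marked functional, and check that the jumps of $\bar F_n$ vanish in probability, so the limit lives in $C[0,\infty)$; the continuity of $\mc U_{\bar\bc}$ itself reduces to the summability $\sum_{j\ge 1} j\,\bar c_j^{\,2}<\infty$, since its Gaussian increment variance behaves like $2|t-s|\sum_j j\,\bar c_j^{\,2}$ near zero, and this holds because the coefficients above decay factorially in $j$. I expect the main obstacle to be the vacancy indicator: it destroys the clean $U$-statistic structure, since inserting a single point can simultaneously annihilate the contributions of many subsets whose neighborhoods it enters, so the add-one (and add-two) cost operators are not uniformly small, and bounding their moments — uniformly in $n$, and in a way that both drives the Malliavin--Stein error to zero and justifies interchanging the limit with the infinite $j$-sum in the covariance — is the delicate technical core of the argument.
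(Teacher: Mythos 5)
Your overall architecture matches the paper's: both use the marked Poisson representation (birth time plus exponential lifetime as marks), Cram\'er--Wold plus the Malliavin--Stein bounds of \cite{lachieze2019normal} for the finite-dimensional distributions, and a Mecke-formula expansion of the covariance in which the vacancy indicator yields, via void probabilities, a factor $\exp\bigl(n e^{-\Delta}\,\bbQ[N_r(\mb X)\cap N_r(\mb X')]\bigr)$ whose power-series expansion produces the terms $e^{-l\Delta}$ for every $l\ge 1$; your account of why every $\bar c_j$ is positive (joint feasibility) and why $\|\bar\bc\|=1$ is also the paper's.

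However, your tightness step contains a genuine error. You propose to prove a Kolmogorov--Chentsov bound
\begin{equation*}
\ex\bigl[|\bar F_n(t)-\bar F_n(s)|^{2p}\bigr]\le C|t-s|^{1+\delta}
\end{equation*}
uniformly in $n$. No such bound can hold, even for a single fixed $n$: the process $\bar F_n$ is a pure jump process, and for fixed $n$ a single birth or death occurs in $(s,t]$ with probability of order $n|t-s|$, producing an increment whose size is bounded away from zero (of order $\var[F_n(0)]^{-1/2}$). Hence the $2p$-th moment of the increment is bounded \emph{below} by a constant (depending on $n$) times $|t-s|$, which beats $C|t-s|^{1+\delta}$ as $|t-s|\to 0$. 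Equivalently, such a bound would imply by Kolmogorov--Chentsov that $\bar F_n$ has a continuous modification, which is impossible for a c\`adl\`ag process with genuine jumps. This is precisely why the paper does not use single increments but the Billingsley/Ethier--Kurtz criterion (condition (C2) of Theorem \ref{mainthm2}): one bounds the \emph{product} of two adjacent increments,
\begin{equation*}
\ex\bigl[(F_{n}(t+h)-F_{n}(t))^{2}\,(F_{n}(t)-F_{n}(t-h))^{2}\bigr]\le C h^{2}n^{2},
\end{equation*}
which succeeds because making both adjacent increments large requires jump activity in two disjoint intervals, an event of probability $O(h^2)$; the paper extracts the $h^2$ via a Taylor expansion of the mark-correlation function (Lemmas \ref{lem:Phi} and \ref{lem:PhiF}) and controls the spatial part with the intersection-pattern Lemma \ref{lemgeom}. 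Your continuity argument for the limit (Gaussianity plus $\sum_j j\,\bar c_j^{\,2}<\infty$ from the factorial decay of the coefficients) is a legitimate alternative to the paper's Fernique-criterion proof, but it does not repair the tightness gap, since tightness must be established for the prelimit jump processes, not the limit.
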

In both theorems, weak convergence is considered in the space of c\`adl\`ag functions on $[0,\infty)$ equipped with the usual Skorokhod metric. 
Note that the claim in Theorem \ref{OU2} about continuity of the limit process is true in the setting of Theorem \ref{OU} as well. However, since the limit is a 
 \emph{finite} superposition of OU processes, continuity here is straightforward. This is not the case for the infinite sum.
 
We conclude this section with two examples for which the above two theorems hold.
The first comes from the area of 
random geometric graphs, and the second is related to random distance functions -- a powerful tool in the study of the topology of random simplicial complexes.

 \begin{example}[Geometric graphs]
 \label{example-graphs}
Let $G(\eta, r)$ be a geometric graph built over a finite $\eta\subset \Rd$ with distance parameter $r$, i.e.\ an edge is placed between any two points in $\eta$ no further than $r$ apart. Define
 $$\xi_{k,r}(\mcS) = \ind\{\textrm{$G(\mcS,r)$ is a $k$-clique}\}.$$
Then $\xi_{k,r}$ satisfies Assumptions \ref{inv}-\ref{boundedass}, and ${f}_n(t)$ counts the number of $k$-cliques formed by $G(\eta_n(t),r)$. In addition, if we take
\[
    N_r(\mcS) := \bigcup_{x\in \mcS} B(x,r),
\]
where $B(x,r)$ is the ball of radius $r$ around $x$, 
then $N_r(\mcS)$ satisfies Assumptions \ref{linF} and \ref{loc2}, and we have that $F_n(t)$ counts the number of $k$-clique \emph{components} in the graph $G(\eta_n(t),r)$.
Note that, in a similar fashion, by choosing $\xi_{k,r}$ appropriately we can set $f_n(t)$ to count the number of copies of any feasible graph $\Gamma$ on $k$ vertices, and $F_n(t)$ to count the number of components isomorphic to $\Gamma$.
\end{example}


\begin{example}[Distance functions]
\label{example-distancefunction}
Given a finite $\mc{A}\subset \Rd$  define the distance function 
$$d_{\mc{A}}(x) = \min_{y\in\mc{A}} \| x-y\|.$$ 
In \cite{gershkovich1997morse} it was shown that critical points for the function $d_{\mc{A}}$, of Morse index $k$, are generated by subsets $\mcS\subset {\mc{A}}$ for which $|\mcS| = k+1$ and satisfying the following two conditions, in which $B(\mcS)$ is the smallest open ball containing $\mcS$ on its boundary:

(a) The center of $B(\mcS)$ lies inside the $k$-dimensional simplex spanned by $\mcS$,

(b) $B(\mcS)\cap{\mc{A}} = \emptyset$.

\noindent Define $\xi_{k+1,r}$ to be the indicator that (a) holds (for some range of distances determined by $r$), and take $N_r(\mcS) = B(\mcS)$. Then $F_n(t)$ counts the number of critical points of index $k$, for the dynamic distance function $d_{\eta_n(t)}(x)$. The analysis of such critical points is of its own interest, but more importantly it plays a key role in  the study of the topology of random \v{C}ech complexes \cite{bobrowski2019homological,MR3280987}.   
\end{example}


\section{Preliminaries}
\label{sec3}
\sec
Before proving the main theorems, we present a few definitions and recall  existing theorems that will be used later. 

\subsection{A Functional CLT}
The proofs of Theorems \ref{OU} and \ref{OU2} will rely on the following functional version of the CLT. Let $D_\mathbb{R}[0,\infty)$ denote the space of c\`adl\`ag functions on $[0,\infty)$ equipped with the  Skorokhod metric, denoted by $\rho$. We say that a family of $(D_\mathbb{R}[0,\infty), \rho)$-valued stochastic processes, $\{X_n(\cdot)\}_{n=1}^\infty$,  is tight, if,  for every $\varepsilon>0$, there exists a compact set $\Gamma_\varepsilon\subset D_\mathbb{R}[0,\infty)$ for which 
\begin{align*}
    \inf_{n\geq 1}\pr[X_n\in \Gamma_\varepsilon]\geq 1-\varepsilon.
\end{align*}
Theorem 2.2 in \cite{Thoppe2016OnComplexes}, adapted from \cite{ethier2009markov}, will play an essential role in our proofs. For ease of reading, we split it into two statements.

\begin{thm}[Theorem 2.2 in \cite{Thoppe2016OnComplexes}]\label{mainthm}
  Let $\{X_n(\cdot)\}_{n=1}^\infty$ be a tight sequence in $(D_\mathbb{R}[0,\allowbreak\infty), \rho)$. If the finite dimensional distributions of $X_n$ converge in distribution to those of the process $Z$, then the processes $X_n$ converge in distribution to $Z$. 
 \end{thm}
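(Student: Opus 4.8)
The statement is the standard criterion that \emph{tightness plus convergence of finite-dimensional distributions implies weak convergence}, so the plan is to recall the classical subsequence argument underlying it (as in \cite{ethier2009markov}, from which Theorem 2.2 of \cite{Thoppe2016OnComplexes} is adapted). First I would invoke Prohorov's theorem: since $D_\mathbb{R}[0,\infty)$ equipped with the Skorokhod metric $\rho$ is a Polish space, the assumed tightness of $\{X_n\}$ is equivalent to relative compactness of the family of laws $\{\mathcal{L}(X_n)\}$ in the topology of weak convergence. Consequently every subsequence of $\{X_n\}$ admits a further subsequence converging weakly to some limit process; call a generic such limit $Y$. It then suffices to show that every subsequential limit $Y$ satisfies $Y \stackrel{d}{=} Z$, for then relative compactness forces the whole sequence to converge weakly to $Z$.

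To identify $Y$, I would pass through finite-dimensional distributions, the delicate point being that the coordinate projections $\pi_t : D_\mathbb{R}[0,\infty) \to \mathbb{R}$ are \emph{not} continuous in the Skorokhod topology at paths that jump at $t$. The standard remedy is to work only at continuity times of the limit: let $D_Y := \{ t \ge 0 : \pr[\, Y(t) = Y(t-)\,] = 1 \}$ denote the set of times at which $Y$ is almost surely continuous. A monotonicity/Fubini argument shows the complement of $D_Y$ is at most countable, so $D_Y$ is dense in $[0,\infty)$. For any $t_1 < \cdots < t_m$ in $D_Y$, the projection $(\pi_{t_1}, \ldots, \pi_{t_m})$ is continuous almost everywhere with respect to $\mathcal{L}(Y)$, so along the convergent subsequence the continuous mapping theorem yields $(X_{n_k}(t_1), \ldots, X_{n_k}(t_m)) \Rightarrow (Y(t_1), \ldots, Y(t_m))$. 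On the other hand, the hypothesis gives $(X_{n_k}(t_1), \ldots, X_{n_k}(t_m)) \Rightarrow (Z(t_1), \ldots, Z(t_m))$, and uniqueness of weak limits shows the finite-dimensional distributions of $Y$ and $Z$ agree for all time-tuples drawn from $D_Y$.

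The final step, and the main conceptual obstacle, is upgrading this agreement on a dense set of times to equality of laws on the full path space. Here I would appeal to the fact that the finite-dimensional distributions indexed by a dense set of times form a \emph{separating class} for probability measures on $D_\mathbb{R}[0,\infty)$. Concretely, because $D_Y$ is dense and c\`adl\`ag paths are right-continuous, the values at times in $D_Y$ determine the path, and the Borel $\sigma$-algebra of the Skorokhod topology is generated by the coordinate projections over such a dense set; hence two laws with identical finite-dimensional distributions over $D_Y$ must coincide. This gives $\mathcal{L}(Y) = \mathcal{L}(Z)$. Since $Y$ was an arbitrary subsequential limit, all subsequential limits equal $Z$ in distribution, and combined with the relative compactness from the first step we conclude $X_n \Rightarrow Z$. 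The one place demanding genuine care is precisely the non-continuity of the projections in the Skorokhod topology, which is why the continuity-time set $D_Y$ must be introduced rather than arguing at all times directly.
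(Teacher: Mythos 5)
Your proof is correct: Prohorov's theorem plus the subsequence principle, identification of subsequential limits through finite-dimensional distributions at the a.s.\ continuity times $D_Y$ (handling the non-continuity of Skorokhod projections), and the fact that coordinate projections over a dense time set generate the Borel $\sigma$-field of $D_\mathbb{R}[0,\infty)$ is exactly the classical argument. Note that the paper itself gives no proof of this statement --- it is imported verbatim as Theorem 2.2 of the cited reference, itself adapted from Ethier--Kurtz --- and your reconstruction is precisely the standard proof found in that source, so there is nothing to compare beyond saying the approaches coincide.
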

 \begin{thm}[Theorem 2.2 in \cite{Thoppe2016OnComplexes}]\label{mainthm2}
 The sequence $\{X_n(\cdot)\}_{n=1}^\infty$ is tight in $(D_\mathbb{R}[0,\allowbreak\infty), \rho)$ if the following holds.
 \begin{itemize}
     \item[\textnormal{(C1)}] There exists $\Upsilon>0$ such that
 \[\lim_{\delta\to 0} \limsup_{n\to\infty}\ex\big[|X_n(\delta) -X_n(0) |^\Upsilon\big]=  0.\]
 
     \item[\textnormal{(C2)}] For every $T_0>0$, there exist constants $\Upsilon_1>0$, $\Upsilon_2>1$, and $C>0$ such that for all $n$, $0\leq t\leq T_0+1$, and $0\leq h\leq t$,
 \[ \ex\big[|X_n(t+h)-X_n(t)|^{\Upsilon_1} |X_n(t)-X_n(t-h)|^{\Upsilon_1} \big]\leq Ch^{\Upsilon_2}. \]
 \end{itemize}
\end{thm}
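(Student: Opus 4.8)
The plan is to read Theorem \ref{mainthm2} as the $[0,\infty)$-version of the classical product-increment moment criterion for tightness in Skorokhod space, and to reduce it to a finite-horizon statement. First I would use the projective structure of the $J_1$ topology on $[0,\infty)$: a subset of $D_\mathbb{R}[0,\infty)$ is relatively compact if and only if each of its restrictions to $[0,T_0]$ is relatively compact in $D_\mathbb{R}[0,T_0]$ (cf.\ \cite{ethier2009markov}). Hence, by Prokhorov's theorem, it suffices to prove that the restricted sequences are tight in each $D_\mathbb{R}[0,T_0+1]$, so I would fix $T_0$ and work on this compact interval.

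On $D_\mathbb{R}[0,T_0+1]$, relative compactness is characterized (the c\`adl\`ag Arzel\`a--Ascoli theorem) by uniform boundedness together with the smallness of the Skorokhod oscillation modulus $w'(x,\delta)=\inf_{\{t_i\}}\max_i\sup_{s,t\in[t_{i-1},t_i)}|x(s)-x(t)|$, where the infimum runs over partitions with mesh at least $\delta$. The decisive step is to show $\lim_{\delta\to 0}\limsup_{n\to\infty}\pr[w'(X_n,\delta)\geq\varepsilon]=0$ for every $\varepsilon>0$, and the engine for this is hypothesis (C2). Indeed, the bound on the \emph{product} of the left and right increments straddling a common point $t$, with exponent $\Upsilon_2>1$, is exactly the hypothesis of Billingsley's moment criterion: taking $F(t)=C't$ and $(t_1,t,t_2)=(t-h,t,t+h)$ rewrites $Ch^{\Upsilon_2}$ as $[F(t_2)-F(t_1)]^{\Upsilon_2}$, and a chaining argument over dyadic refinements then converts this pointwise bound into the desired tail bound on $w'$. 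The product structure is essential precisely because it forbids two large increments from meeting at a single instant, which in the $J_1$ topology would otherwise register as an unremovable discontinuity that $w'$ could not absorb.

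The modulus $w'$ is, however, insensitive to oscillations at the extreme left endpoint (a standard feature of $J_1$), and hypothesis (C1) is what repairs this: $\lim_{\delta\to 0}\limsup_{n\to\infty}\ex[|X_n(\delta)-X_n(0)|^{\Upsilon}]=0$ forces the paths not to fluctuate immediately after time $0$, supplying the missing endpoint control. The one remaining classical ingredient is tightness of the one-dimensional marginals (equivalently, compact containment); in the present application this is furnished automatically by the convergence of finite-dimensional distributions assumed in \refthm{mainthm}. Combining marginal tightness, the endpoint control from (C1), and the modulus control from (C2), I would invoke Prokhorov's theorem on each $D_\mathbb{R}[0,T_0+1]$ and lift back to $D_\mathbb{R}[0,\infty)$ through the reduction of the first paragraph.

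I expect the main obstacle to be the chaining step that passes from the pointwise product-moment bound (C2) to a uniform tail bound on the jump-sensitive modulus $w'$. One must organize a binary (dyadic) decomposition so that any two time points that are close are joined through intermediate points whose straddling increments are individually controlled, while arranging that the exponent $\Upsilon_2>1$ makes the resulting sum over dyadic scales converge; the delicate interplay between this argument and the left-endpoint correction from (C1) is the place where the Skorokhod topology, rather than the uniform topology, genuinely enters.
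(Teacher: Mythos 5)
The paper offers no proof of Theorem \ref{mainthm2}: it is imported verbatim (after splitting into two statements) from Theorem 2.2 of \cite{Thoppe2016OnComplexes}, itself an adaptation of the tightness criteria of Ethier and Kurtz \cite{ethier2009markov} (Chapter 3, Theorems 8.6 and 8.8). So there is no in-paper argument to compare against, and your proposal must be judged against those sources; measured that way, your outline --- restriction to compact horizons (with the standard caveat that restriction behaves well only at continuity points of the limit), control of the modulus $w'$ by a product-increment fluctuation inequality, left-endpoint repair via (C1), plus marginal tightness --- is exactly the classical route.

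Two substantive remarks. First, there is a genuine gap in your reduction to Billingsley's criterion: Billingsley's condition demands the product-moment bound for \emph{all} straddling triples $t_1 \le t \le t_2$, while (C2) delivers it only for symmetric triples $(t-h,t,t+h)$. There is no easy passage from the symmetric to the asymmetric form: Cauchy--Schwarz would require single-increment moment bounds of order $h^{\Upsilon_2/2}$ with $\Upsilon_2>1$, which (C2) does not provide and which fail in the intended applications (controlling two nearby large movements, rather than single increments, is the very reason the product form is used). So Billingsley's theorem cannot be invoked as a black box; you must either run the dyadic chaining yourself using only same-scale adjacent increments, or --- the cleaner fix --- cite the Ethier--Kurtz version, whose hypotheses are precisely (C1)--(C2) in symmetric form and which is the result that \cite{Thoppe2016OnComplexes} actually adapts. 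Second, your observation that (C1)--(C2) alone do not imply tightness, and that marginal tightness (compact containment) must come from elsewhere, is correct and worth emphasizing: the deterministic paths $X_n \equiv n$ satisfy (C1) and (C2) trivially yet are not tight, so Theorem \ref{mainthm2} read in isolation is incomplete as stated. In this paper the criterion is only ever applied to $\bar f_n$ and $\bar F_n$, which are normalized to mean zero and unit variance, so marginal tightness is immediate, and the finite-dimensional convergence of Lemmas \ref{finite} and \ref{finiteF} is established alongside --- exactly the repair you identified.
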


\subsection{Useful lemmas}
We will need two lemmas (\ref{Palmhigh} and \ref{lemgeom}) which
generalize common counting techniques for Poisson processes, when the objects being counted have an intricate structure of intersections. The following notation is needed for their statements. 

\begin{define}\label{defn:pattern}
Let $\cI_\ell$ denote a collection of natural numbers indexed by nonempty subsets of $[\ell] = \{1,\ldots,\ell\}$, i.e. $\cI_\ell = (I_J)_{J\subset [\ell], J\ne\emptyset}$. Given a sequence of sets $\mcS_{1},\ldots,\mcS_{\ell} \in \fin(\Rd)$, suppose that for all nonempty $J\subset[\ell]$ we have
\[
\left| \left(\bigcap_{j\in J} \mcS_j \right)\cap\left(\bigcap_{j\not\in J} (\mcS_j)^c \right)\right| = I_J,
\]
where $(\mcS_j)^c$  is  the complement of $\mcS_j$.
In this case we say that $\mcS_{1},\ldots,\mcS_{\ell} $ obey the \emph{intersection pattern} $\cI_\ell$, and denote this by
$(\mcS_{1},\ldots,\mcS_{\ell}) \in \cI_\ell$.
\end{define}

In what follows, we will typically  write $\cI$ for $\cI_\ell$, unless $\ell$ is explicitly required. Set 
\[|\cI| := \sum_{J} I_{J}  = |\mcS_1\cup\cdots\cup \mcS_{\ell}|.\]
Fixing $\cI$, and given a sequence of points ${\mb x} = (x_1,\ldots,x_{|\cI|})$ in $\Rd$, 
let
\[
\Pi_{\cI}(\mb x) \coloneqq (\mcT_1,\ldots,\mcT_\ell) 
\]
be a splitting of the tuple $\mb x$ into  $(\mcT_1,\ldots,\mcT_\ell)\in \cI$, in an arbitrary but fixed manner (e.g., according to lexicographic ordering). Finally, let $\#_{\cI}$ denote the number of connected components in the intersection graph of $\cI$, i.e., the graph with $\ell$ nodes, where each node pair $(i,j)$ is connected by an edge if and only if  $|\mcT_{i}\cap \mcT_{j}|>0$. With regard to the notation used throughout the paper, when there is no room for ambiguity, we allow a tuple to act as a set when under a set action, such as union or intersection. 

 The first lemma is a generalization of the well-known Mecke formula for Poisson point processes. The second lemma generalizes  asymptotic results for subgraph counting in random geometric graphs. Special cases of both lemmas can be found in \cite{MR1986198}. 

\begin{lemma}\label{Palmhigh}
Let $\mcP_{n}$ be a Poisson point process on $\mathbb{R}^d$ with intensity function $n q(x)$, where $q(x)$ is a bounded probability density function on $\mathbb{R}^d$. Let $h( \bar{\mcS}, \mcP_n) $ be a bounded measurable real function with $\bar{\mcS} \coloneqq(\mcS_{1},\ldots,\mcS_{\ell})$. Then,
\[
     \ex\Big[\sum_{\mcS_{1} \subset \mcP_{n}} \cdots \sum_{\mcS_{\ell} \subset \mcP_{n}} h(\bar{\mcS}, \mcP_n)\ind\{ \bar{\mcS} \in \cI\}\Big]  =  \frac{n^{|\cI|} \ex \big[h\big(\Pi_{\cI}(\mb X), \mb{X}\cup\mcP_n\big) \big] }{\prod_{J}I_{J}!},   
\]
where $\mb X$ is a tuple of $|\cI|$ iid points in $\Rd$, with density $q$, and independent of $\mcP_n$.  

\end{lemma}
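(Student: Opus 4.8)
\textbf{The plan} is to prove this generalized Mecke formula by reducing it to the multivariate Mecke equation for Poisson point processes. The core idea is that the $\ell$-fold sum over subsets $\mcS_1,\ldots,\mcS_\ell$ of $\mcP_n$, restricted to those obeying the intersection pattern $\cI$, can be re-expressed as a single sum over the union $\mcS_1\cup\cdots\cup\mcS_\ell$, which consists of exactly $|\cI|$ distinct points of $\mcP_n$. The restriction $\ind\{\bar{\mcS}\in\cI\}$ forces these $|\cI|$ points to be distinct, and the intersection pattern specifies precisely which of these points belong to which $\mcS_j$. Thus each admissible configuration of $(\mcS_1,\ldots,\mcS_\ell)$ corresponds to choosing an ordered tuple of $|\cI|$ distinct points and then assigning them to the cells of the pattern.

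\textbf{First} I would apply the classical multivariate Mecke formula, which states that for a Poisson process $\mcP_n$ with intensity $nq$,
\[
\ex\Big[\sum_{(x_1,\ldots,x_m)\in \mcP_n^{\ne}} g(x_1,\ldots,x_m,\mcP_n)\Big] = n^m \ex\Big[\int_{(\Rd)^m} g(\mb x, \mb X\cup\mcP_n)\,\prod_{i=1}^m q(x_i)\,d\mb x\Big],
\]
where the sum ranges over ordered tuples of $m=|\cI|$ \emph{distinct} points of $\mcP_n$, and $\mb X$ denotes the added atoms. The factor $n^{|\cI|}$ together with the density product is exactly what produces the $n^{|\cI|}\ex[\,\cdot\,]$ expression, since integrating against $\prod q(x_i)$ is the same as taking expectation over iid points $\mb X$ with density $q$.

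\textbf{The main combinatorial step}, which I expect to be the principal obstacle, is the careful bookkeeping that converts the $\ell$-fold subset sum into the single ordered sum over distinct points. Each unordered collection of $|\cI|$ distinct points, partitioned into the cells $(I_J)_{J}$ according to the pattern, must be counted with the correct multiplicity. Summing over ordered tuples of distinct points overcounts each assignment by the number of orderings within each cell, which is precisely $\prod_J I_J!$; this is the source of the denominator. The function $\Pi_{\cI}(\mb x)$ provides the canonical splitting of an ordered $|\cI|$-tuple back into $(\mcT_1,\ldots,\mcT_\ell)\in\cI$, so that $h(\Pi_\cI(\mb X),\mb X\cup\mcP_n)$ correctly evaluates the integrand on each assigned configuration. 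I would verify that the map from ordered distinct tuples to pattern-obeying sequences $(\mcS_1,\ldots,\mcS_\ell)$ is exactly $\prod_J I_J!$-to-one, establishing the normalization.

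\textbf{Finally}, assembling these pieces gives the claimed identity. The boundedness and measurability of $h$ guarantee that all expectations and integrals are finite and that Fubini-type interchanges are justified, so no convergence subtleties arise beyond the finite combinatorial accounting. The crux is entirely the indexing: matching the intersection-pattern structure of Definition \ref{defn:pattern} to the distinct-points formulation of the Mecke equation, and tracking the overcounting factor $\prod_J I_J!$ precisely.
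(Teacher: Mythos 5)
Your proof is correct, but it takes a genuinely different route from the paper's. You reduce the identity to the classical multivariate Mecke equation for ordered tuples of distinct points, and the only real work is the combinatorial claim that the map sending an ordered distinct $|\cI|$-tuple $\mb x$ to the pattern-obeying sequence $\Pi_{\cI}(\mb x)$ is exactly $\prod_J I_J!$-to-one; this claim is right, since the orderings producing a fixed sequence $(\mcS_1,\ldots,\mcS_\ell)\in\cI$ are exactly those permuting points within the block of positions that $\Pi_{\cI}$ reserves for each cell $J$. The paper instead proves the formula from scratch by conditioning on $|\mcP_n|=m$: given $m$ points, which are then iid with density $q$, the number of pattern-obeying sequences is $\frac{m!}{(m-|\cI|)!\prod_J I_J!}$, and summing over the Poisson distribution of $m$ with the reindexing $m-|\cI|\to m$ reassembles the right-hand side as an expectation over $\mb X\cup\mcP_n$. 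In effect, the paper re-derives the Mecke mechanism in this setting, which keeps the lemma self-contained, while your argument is shorter and more modular, outsourcing the Poisson-specific content to a standard theorem (e.g., Last--Penrose) and isolating the new content as pure counting. One small point worth making explicit in your write-up: the Mecke equation for signed integrands requires integrability, which you should verify by first applying it to $|h|$ (boundedness of $h$ plus finiteness of the factorial moments of the Poisson process gives this), and you should note that the added points $\mb X$ are almost surely distinct, so $\Pi_{\cI}(\mb X)$ is almost surely a legitimate pattern-obeying sequence.
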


\begin{proof}
Conditioning on the number of points in $\mcP_n$, we have, for $m\ge |\cI|$,
  \begin{align} \label{exbigsum}
  \begin{split}
     \ex \Big[\sum_{\mcS_{1} \subset \mcP_{n}} \cdots \sum_{\mcS_{\ell} \subset \mcP_{n}} &h(\bar{\mcS}, \mcP_n)\ind\{ \bar{\mcS} \in \cI\} \Big| |\mcP_{n}|=m\Big] \\&=
     \frac{m!}{(m-|\cI|)!\prod_J I_J!}
          \ex \left[h\big(\Pi_{\cI}(\mb X), \mb{X}^m\big) \right],
    \end{split}
 \end{align}
 where $\mb X^m = (X_1,\ldots, X_m)$ is a sequence of $m$ iid points in $\Rd$, with density $q$, and, similarly, $\mb X = (X_1,\ldots, X_{|\cI|})$. \efe{Note that the factor in front of the expectation in \eqref{exbigsum} comes from the number of ways that $m$ points can be partitioned into $\ell$ sets, that obey the intersection pattern $\cI$.
Using the law of total expectation, and then making the change of variables $m-|\cI|\to m $, we obtain}
\[
\ex\Big[\sum_{\mcS_{1} \subset \mcP_{n}} \ldots \sum_{\mcS_{\ell} \subset \mcP_{n}} h(\bar{\mcS}, \mcP_n)\ind\{ \bar{\mcS} \in \cI\} \Big] 
     = \frac{ n^{|\cI|} }{\prod_{J}I_{J}!} \sum_{m=0}^{\infty} \frac{e^{-n}n^m}{m!} \ex \big[h\big(\Pi_{\cI}(\mb X),\mb{X}\cup\mb{\hat X}^{m} \big) \big],
\]
where $\mb{\hat X}^{m}$ is an independent copy of $\mb{X}^m$. This completes the proof.
\end{proof}

The second lemma governs the asymptotic scaling of  products of functionals $\xi_{k,r}$, where the parameters $k$ might vary over the various terms in the product.
 

\begin{lemma}\label{lemgeom}
Let $\cI = \cI_\ell$ be an intersection pattern, such that if $(\mcS_{1},\ldots,\mcS_{\ell}) \in \cI_\ell$ then  $|\mcS_j| = k_j \ge 2$ for every $1\le j \le \ell$.
Let $\mb X$ be a sequence of $|\cI|$ iid points in $\Rd$ with density $q$, and let $(\mcT_1,\ldots,\mcT_\ell) = \Pi_{\cI}(\mb{X})$. 
 Then there is a constant $c\geq 0$ such that
 \[
     \lim_{r\to 0} r^{-d(|\cI| - \#_{\cI})} \ex\left[\prod_{j=1}^\ell \xi_{k_j,r}(\mcT_j) \right] = c  .
\]
\end{lemma}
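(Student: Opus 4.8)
The plan is to compute the expectation $\ex\left[\prod_{j=1}^\ell \xi_{k_j,r}(\mcT_j)\right]$ by integrating against the joint density of the $|\cI|$ iid points, and then to extract the exact power of $r$ that governs the scaling. First I would write the expectation explicitly as an integral over $(\Rd)^{|\cI|}$ of the product $\prod_j \xi_{k_j,r}(\mcT_j)$ weighted by $\prod_{i=1}^{|\cI|} q(x_i)$. The key structural input is the scale invariance (Assumption \ref{inv}), which lets me replace each $\xi_{k_j,r}(\mcT_j)$ by $\xi_{k_j,1}$ of a rescaled configuration, and the localization assumption (Assumption \ref{localass}), which forces each factor to vanish unless $\diam(\mcT_j)\le r\rmax$. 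Thus only point configurations in which each of the $\ell$ subsets is confined to a ball of radius $O(r)$ contribute.

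The heart of the argument is a careful change of variables exploiting the combinatorial structure encoded by $\#_{\cI}$, the number of connected components of the intersection graph. Within each connected component, the localization constraint ties all the participating points together into a single cluster of diameter $O(r)$, so such a component contributes one ``free'' location variable (which integrates against $q$ and contributes the overall density factors) together with a collection of ``difference'' variables, each of which ranges over a set of diameter $O(r)$ and hence contributes a factor $r^d$ per point after subtracting one anchor point per component. Concretely, I would fix, for each of the $\#_{\cI}$ connected components, one anchor point $x$ ranging freely over $\Rd$, and rescale all remaining $|\cI| - \#_{\cI}$ points as differences from their component anchor by writing them as $x + r\,\mb{u}$. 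This substitution produces exactly the Jacobian factor $r^{d(|\cI|-\#_{\cI})}$, which cancels the prefactor $r^{-d(|\cI|-\#_{\cI})}$ in the statement. After the substitution, the continuity of $q$ almost everywhere and dominated convergence (justified by Assumption \ref{boundedass}, which makes the integrand uniformly bounded, together with the compact support in the difference variables coming from localization) let me pass to the limit $r\to 0$. In the limit, $q$ evaluated at the clustered points converges to $q$ evaluated at the common anchor, yielding
\[
c = \int_{(\Rd)^{\#_{\cI}}} \prod_{m=1}^{\#_{\cI}} q(z_m)^{a_m}\,dz \cdot \int \prod_{j=1}^\ell \xi_{k_j,1}(\text{rescaled }\mcT_j)\, d\mb u,
\]
where $a_m$ records how many of the $|\cI|$ points fall in component $m$; this is a finite, nonnegative constant.

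The main obstacle I expect is handling the \emph{intersection pattern} bookkeeping correctly, that is, tracking which points are shared among several of the subsets $\mcT_1,\ldots,\mcT_\ell$ and ensuring that the shared points are assigned consistently to connected components. The subtlety is that a single point may belong to several sets $\mcT_j$ simultaneously (this is precisely what the pattern $\cI$ encodes via the $I_J$), so the differencing variables must be set up relative to the connected-component structure of the intersection graph rather than naively per set; a point shared between two overlapping sets must not be double-counted as a free difference variable. Getting the counting of free versus constrained coordinates to come out to exactly $|\cI|-\#_{\cI}$ differencing directions is where the definition of $\#_{\cI}$ does its work: each connected component, being held together by the localization constraints transmitted across shared points, collapses to a single macroscopic location, so the number of anchor points is $\#_{\cI}$ and the number of $O(r)$-scale coordinates is the remaining $|\cI|-\#_{\cI}$. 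Once this accounting is pinned down, the limit and the identification of $c$ follow routinely from continuity and boundedness.
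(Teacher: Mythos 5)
Your proposal is correct and follows essentially the same route as the paper: write the expectation as an integral, rescale the non-anchor coordinates by $r$ to produce the Jacobian $r^{d(|\cI|-\#_{\cI})}$, use localization to confine the difference variables to a compact set, and conclude by dominated convergence and the almost-everywhere continuity of $q$. The only (cosmetic) difference is organizational: the paper first treats the case of a connected intersection graph with a single anchor $x_1$ and then handles general $\cI$ by factorizing the expectation over connected components via independence, whereas you anchor each component simultaneously—these are the same computation.
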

\begin{proof}
Assume the intersection graph of $\cI$ is connected. Setting
\begin{align*}
    {\xi}_{r} \big(\Pi_{\cI}(\mb{X})\big)\coloneqq \prod_{j=1}^\ell \xi_{k_j,r}(\mcT_j),
\end{align*}
note that ${\xi}_{r}$ inherits the translation and scale invariance, localization and the boundedness of  the $\xi_{k_j,r}$. Next, note that
\[
\begin{split}
   \ex\left[\prod_{j=1}^\ell \xi_{k_j,r}(\mcT_j) \right] =& \int_{\Rd} q(x_1) \int_{\left(\Rd\right)^{|\cI|-1}} {\xi}_{r} \big(\Pi_{\cI}(\mb x)\big)\bigg( \prod_{i=2}^{|\cI|} q(x_i)\der x_i\bigg) \der x_1.
\end{split}
\]
Using the change of variables $x_i = x_1+ry_{i-1}$ for $2\leq i\leq |\cI|$, and  the translation and scale invariance of ${\xi}_{r}$,
\[
\frac{\ex\big[\prod_{j=1}^\ell\xi_{k_j,r}(\mcT_j) \big]}{r^{d(|\cI|-1)}} =
    \int_{\left(\Rd\right)^{|\cI|}}  {\xi}_{1} \big(\Pi_{\cI}(0,\mb y)\big) q(x_1) \prod_{i=1}^{|\cI|-1} q(x_1+ry_i)\der y_i \der x_1, 
\]
where $(0,\mb y)=(0,y_1,\ldots,y_{|\cI|-1})$. 
Note, due to the connectivity assumption on the intersection graph of $\cI$, the absolute value of the integrand on the right hand side can be  bounded above by
\begin{align*}
    \left(\max_{1\leq j\leq \ell}\|\xi_{k_j}\|_{\infty}\right)^\ell \big(\|q\|_{\infty}\big)^{|\cI|}\prod_{i=1}^{|\cI|-1}\ind\{y_i\in B(0,\ell\rmax)\} ,
\end{align*}
which is integrable. Therefore, using the dominated convergence theorem and the assumed almost everywhere continuity of $q$, we have 
\[
    \lim_{r\to 0} \frac{\ex\big[\prod_{j=1}^\ell\xi_{k_j,r}(\mcT_j) \big]}{r^{d(|\cI|-1)}} =\int_{\Rd}[q(x)]^{|\cI|}\der x \int_{(\Rd)^{|\cI|-1}} {\xi}_{1} \big(\Pi_{\cI}(0,\mb y)\big) \prod_{i=1}^{|\cI|-1} \der y_i,
\]
which is a non-negative constant due to the boundedness of $q(x)$ and the assumptions on $\xi_{k_j,1}$.

Finally, for general $\cI$ (i.e., where the intersection graph is not necessarily connected), the proof follows by calculating the expectation for the components of the intersection graph separately and using the independence of the functions $\xi$ that operate on different components.
\end{proof}

\section{Proof of \refthm{OU}}\label{proof1}\sec
The proof of \refthm{OU} will be established through a sequence of lemmas, divided into four main steps: (1) characterizing the asymptotic behavior of the mean and variance of $f_{n}(t)$, (2) proving convergence of the finite dimensional distributions, (3) calculating the limiting covariance function, and (4) proving tightness. \robert{Throughout the proofs we  implicitly assume the model described in the previous section, the assumptions on the density $q$, and Assumptions \ref{inv} to \ref{jointfeas}.}

\subsection{First and Second Moments}

Since the process $\eta_n(t)$ is stationary, applying  Mecke's Formula (\reflem{Palmhigh}, with $\ell=1$) yields
\begin{align}
 \ex[f_{n}(t)] 
 = \frac{n^{k}}{k!}\alpha_{k}, \label{125}   
\end{align}
 where we use $\alpha_k$ as  shorthand notation for
\[
    \alpha_k\coloneqq \ex[\xi_{k,r}(\mb X)],
\]
  and $\mb X\coloneqq (X_1,\ldots,X_k)$ is a tuple of  iid points with density $q$. 
  Next, we have
\begin{align*}
    \ex[f_{n}(t)^2] &=  \ex\Bigg[ \sum_{i=0}^{k} \sum_{\substack{\mcS_1, \mcS_2\subset \eta_n(t)\\|\mcS_1\cap \mcS_2|=i}}    \xi_{k,r}(\mcS_1)\xi_{k,r}(\mcS_2)\Bigg].
\end{align*}
Using \reflem{Palmhigh} we obtain
\begin{align}\label{126}
    \ex[f_{n}(t)^2]&= \sum_{i=0}^{k} \frac{n^{2k-i}}{i!((k-i)!)^2} \alpha_{k,i}, 
\end{align}
where 
\begin{align}\label{probdef}
    \alpha_{k,i}&\coloneqq \ex\left[\xi_{k,r}(\mb X)\xi_{k,r} (\mb X' )\right],
\end{align}
and $\mb X'\coloneqq (X_{k-i+1},\ldots, X_{2k-i})$ are iid points with density $q$ ($i$ of them intersect with the points in $\mb X$). Note that $\alpha_{k,0} = \alpha^2_{k} $ for any $k\geq 2$.
From \eqref{125} and \eqref{126} we can calculate the variance as follows,
\[
\var[f_{n}(t)] = \frac{n^{k}}{k!}\left[\sum_{i=0}^{k} \binom{k}{i}  \frac{n^{k-i}}{(k-i)!} \alpha_{k,i} -\frac{n^{k}}{k!} \alpha_{k}^2\right]
    =\frac{n^{k}}{k!}\sum_{i=1}^{k} \binom{k}{i}  \frac{n^{k-i}}{(k-i)!} \alpha_{k,i}.
\]
It follows that 
 \begin{align*}
     \var[f_{n}(t)] \geq  \frac{n^{k}}{k!}\alpha_{k,k}.
 \end{align*}
Applying \reflem{lemgeom} (with $\ell = 1$), we have that  
\[\lim_{r\to 0} \alpha_{k,k} r^{-d(k-1)} = \int_{\Rd}q(x)^{k}\der x \int_{(\Rd)^{k-1}} \xi_{k,1}^2(0, \mb y) \prod_{i=1}^{k-1} \der y_i>0, \] 
due to \eqref{feaseq}. Recalling that $nr^d\to \gamma >0$,  we conclude that
\begin{equation}
\kappa_k := \liminf_{n\to\infty}\frac{\var[f_{n}(t)]}{ n} >0. \label{eqvarbound}
\end{equation}

\subsection{Finite Dimensional Distributions}

In this subsection, we prove   convergence of the finite dimensional distributions of the $\bar{f}_{n}(t)$.

\begin{lemma}\label{finite}
Suppose that $\lim_{n\to\infty} nr^d = \gamma \in (0,\infty)$. Then the finite dimensional distributions of $\bar{f}_{n}(t)$ converge to a multivariate Gaussian distribution.
\end{lemma}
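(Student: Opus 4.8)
The plan is to establish convergence of all finite dimensional distributions via the Cramér--Wold device, thereby reducing the claim to a one-dimensional central limit theorem for an arbitrary linear combination, which I would then attack with the Malliavin--Stein machinery for functionals of a Poisson process. Fix finitely many times $0\le t_1<\cdots<t_m$ and reals $a_1,\ldots,a_m$, and set $W_n := \sum_{i=1}^m a_i \bar f_n(t_i)$. It suffices to prove that $W_n$ converges to a centered Gaussian whose variance is the limit of $\var[W_n]=\sum_{i,j}a_i a_j\cov[\bar f_n(t_i),\bar f_n(t_j)]$ (the existence and explicit form of this limiting covariance being handled in the subsequent covariance step).

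The essential device is to realize the whole trajectory as a single \emph{static} marked Poisson process, so that $W_n$ becomes a genuine Poisson functional. Using the graphical construction of the dynamics, I would represent each particle by a triple $\zeta=(x,b,\ell)\in\Rd\times\mathbb R\times\mathbb R^+$ recording its location, birth time and lifetime, and let $\mcP_n$ be the Poisson process on this product space with intensity $n q(x)\,dx\otimes dt\otimes e^{-\ell}\,d\ell$. By the stationary Poisson characterization of $\eta_n$, the state $\eta_n(t)$ is recovered as the spatial projection of those $\zeta\in\mcP_n$ with $b\le t\le b+\ell$, so that each $f_n(t_i)$, and hence $W_n$, is a fixed measurable functional of the single process $\mcP_n$.

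I would then invoke the quantitative normal approximation bound for Poisson functionals obtained via Malliavin--Stein methods (as in \cite{MR3520016,lachieze2019normal}), which controls the distance between $W_n$ and a Gaussian by integrals, against the intensity of $\mcP_n$, of the first- and second-order add-one cost operators $D_\zeta W_n$ and $D^2_{\zeta_1,\zeta_2}W_n$. Inserting a particle $\zeta=(x,b,\ell)$ perturbs $f_n(t_i)$ only when the particle is alive at $t_i$, and then only through the size-$k$ subsets containing $x$; by the localization and boundedness Assumptions \ref{localass} and \ref{boundedass} these increments have all moments uniformly bounded and are spatially supported in a ball of radius $O(r)$ about $x$, which supplies the stabilization needed to estimate the difference operators. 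The second differences $D^2_{\zeta_1,\zeta_2}$ are moreover nonzero only when $\zeta_1$ and $\zeta_2$ lie within distance $O(r)$ of each other and are simultaneously alive.

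The heart of the argument, and the step I expect to be the main obstacle, is showing that this bound vanishes. Expanding the squared difference operators produces expectations of products $\prod_j \xi_{k_j,r}(\mcT_j)$ over collections of subsets with prescribed overlaps, which is exactly the situation treated by Lemmas \ref{Palmhigh} and \ref{lemgeom}: the former reduces the Poisson expectations to integrals over iid points, and the latter supplies the sharp scaling $r^{d(|\cI|-\#_\cI)}$ for each intersection pattern. Combining these with the intensity factors $n^{|\cI|}$, the relation $nr^d\to\gamma$, and the normalization by $\sqrt{\var[f_n(t_i)]}$ together with the variance order $\var[f_n(t_i)]\asymp n$ that follows from \eqref{126} and \eqref{eqvarbound}, each integral in the bound reduces to a fixed negative power of $n$, so the overall estimate is $O(n^{-1/2})$ and tends to zero. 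The delicate part is the bookkeeping of the powers of $n$ and $r$ across all intersection patterns and all pairs of times $t_i,t_j$, and checking that the time integrals against the exponential lifetime law stay finite; this combinatorial accounting is where the bulk of the technical effort lies.
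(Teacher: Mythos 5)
Your overall route is the same as the paper's: Cram\'er--Wold, followed by a representation of the whole trajectory as a \emph{static} marked Poisson process (your space-time intensity $nq(x)\,dx\otimes db\otimes e^{-\ell}d\ell$ with births on all of $\mathbb{R}$ is a legitimate variant of the paper's $\wh\eta_{n,T}$, which instead marks the points of $\eta_n([0,T])$ with an atom at $B_x=0$), and then the Malliavin--Stein machinery of \cite{lachieze2019normal} with the moment bookkeeping done by Lemmas \ref{Palmhigh} and \ref{lemgeom}. The paper applies the packaged stabilization result (\refthm{cltthm}) rather than raw difference-operator bounds, but that is a cosmetic difference.

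There is, however, a genuine gap in your variance accounting. Every Kolmogorov-distance bound of this type carries the variance of the functional being normalized in its denominator; for the linear combination $H_n=\sum_i a_i f_n(t_i)$ this is $\var(H_n)$, and the condition you must verify is the paper's $\sup_n n/\var(H_n)<\infty$, i.e.\ a lower bound of order $n$ on the variance of the \emph{combination}. You only invoke $\var[f_n(t_i)]\asymp n$ for each individual time (from \eqref{126} and \eqref{eqvarbound}), which does not rule out cancellation across times: a priori $\var(H_n)$ could be $o(n)$ for some nonzero $(a_i)$, in which case your claimed $O(n^{-1/2})$ estimate fails, since the numerator terms still scale like $n$ while the denominator does not. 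The paper closes exactly this point by importing the limiting covariance of \reflem{covgen}, writing $\lim_n \var(H_n)/\var(f_n(0))=\sum_{\ell=1}^k\lambda_\ell\,\bs\omega^\top\bs T^{(\ell)}\bs\omega$ with $T^{(\ell)}_{ij}=e^{-\ell|t_i-t_j|}$, and using that each $\bs T^{(\ell)}$ is positive definite (it is an OU covariance matrix) together with $\lambda_\ell>0$ (which ultimately rests on the feasibility assumption) to conclude the limit is strictly positive. Your proposal defers the ``existence and explicit form'' of the limiting covariance to the covariance step but never uses it to exclude degeneracy; you either need this positive-definiteness argument, or a separate case split showing that if $\var(W_n)\to 0$ then $W_n\to 0$ in $L^2$ and the (degenerate) Gaussian limit holds by Chebyshev. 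Without one of these, the step ``the overall estimate is $O(n^{-1/2})$'' is unjustified.
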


To prove Lemma \ref{finite}, we use the CLT result proven in \cite{lachieze2019normal} for functionals of \emph{marked} Poisson processes. The setting considered in \cite{lachieze2019normal} consists of a Poisson  process $\mcP_n$ on $\Rd$ with the intensity measure $n\mathbb Q$.
Each point $x\in \Rd$ in the process is further assumed to be associated with an independent and identically distributed mark, $M_x$, in the mark space $\mathbb M$. Let the space of a single marked point in $\Rd$ be $\wh{\mathbb X}\coloneqq  \Rd\times \mathbb M$, and we use $\wh{x}$ for the  pair $(x,M_x)$. We denote the marked process by $\wh \mcP_n$. Before we present the central limit theorem of \cite{lachieze2019normal}, we need a few definitions. Note that the definitions we present here are simplified versions of similar concepts in \cite{lachieze2019normal} that suffice for our purposes.

 \begin{define}[Decaying stabilization]\label{decstab}
 Let $R_n\geq 0$ with $\limsup\limits_{n\to\infty} n(R_n)^d <\infty$. We say that $R_n$ is a radius of decaying stabilization for the (local) function  $h_n: \wh{\mathbb{X}}\times \fin(\wh{\mathbb X})\to \mathbb R$, if, for all $(x,M_x)=\wh{x}\in \wh{\mathbb{X}}$ and $\wh{\mc Y} \in \fin(\wh{\mathbb X})$,
\[
     h_n \big(\wh x, (\wh{\mc Y} \cup \{\wh x\})\cap \wh B(x, R_n) \big) =h_n \big(\wh x, \wh{\mc Y} \cup \{\wh x\}\big) 
\]
 where $\wh B(x,\lambda)\coloneqq B(x,\lambda)\times \mathbb M$ for $x\in\Rd$ and $\lambda>0$.
 \end{define}
Simply put, Definition \ref{decstab} requires  that the value of  $h_n$ at any $\wh x\in \wh{\mathbb X}$ depends only on the points of the process that are in the ball of a  radius $R_n$  (Note  that the original definition of stabilization in \cite{lachieze2019normal} is for stochastic $R_n$, and the results there need to impose an exponential tail bound for its distribution. Our setup provides a deterministic stabilization radius, which trivially satisfies the tail bound, leading both to the simpler \refdef{decstab} and so the simpler Theorem 
\ref{cltthm}.)



\begin{define}[$(4+p)${th} moment condition]
The sequence of functions  $(h_n)_{n\geq 1}$ satisfies a $(4+p)${th} moment condition for some $p\in (0,\infty)$, if there is a constant $c\in(0,\infty)$  such that, for all $\mc A \subset \Rd$ with $|\mc A|\leq 7$,
\begin{equation}\label{eqmom}
    \sup_{n\in[1,\infty)}\sup_{x\in\Rd}\ex\Big| h_n\big( (x,M_x),\ \wh{\mc P}_n\cup \{(x,M_x)\} \bigcup_{y\in \mc A} \{(y, M_{y})\} \big) \Big|^{4+p} \leq c.
\end{equation}
\end{define}



The CLT in \cite{lachieze2019normal} deals with functionals of $\wh{\mcP}_n$ that can be written as follows 
\[
    H_n \coloneqq \sum_{\wh x\in \wh{\mcP}_n} h_n(\wh x,\wh{\mcP}_n).
\]
The following theorem quantifies the rate of  convergence in terms of the \emph{Kolmogorov} distance
 between two random variables $Y$ and $Z$,
\begin{equation*}
    d_K(Y,Z)\coloneqq \sup_{u\in \mathbb R}\big|\pr[Y\leq u] - \pr[Z\leq u]\big|.
\end{equation*}


\begin{thm}[Corollary 2.2 in \cite{lachieze2019normal}]
\label{cltthm}
Assume that $(h_n)_{n\geq 1}$ are stabilized with a decaying radius 
as in Definition \ref{decstab},
and satisfy the $(4+p)${th} moment condition \eqref{eqmom} for some $p>0$. Assume further that there is a constant $C>0$ such that $\sup\limits_{n\geq 1} n/\var(H_n)\leq C$. Then there is a $C'>0$ such that
\[
    d_K\left(\frac{H_n-\ex H_n}{\sqrt{\var H_n}}  ,Z\right)\leq \frac{C'}{\sqrt{\var H_n}}\quad \textrm{for $n\geq 1$,}
\]
where $Z\sim N(0,1)$.

\end{thm}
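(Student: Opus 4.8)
This is the Malliavin--Stein normal-approximation result for stabilizing functionals of a marked Poisson process, so the plan is to deduce it from the abstract second-order Poincar\'e bound for Poisson functionals and then to estimate each term in that bound using decaying stabilization and the $(4+p)$th moment condition.

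First I would invoke the general Malliavin--Stein inequality for a centred Poisson functional $F$ (here $F = H_n - \ex H_n$): the Kolmogorov distance between $F/\sqrt{\var F}$ and $Z\sim N(0,1)$ is controlled by a finite sum of integrals, over the intensity measure $n\bbQ$, of the $L^4$-norms of the first- and second-order add-one cost operators $D_{\wh x}F$ and $D^2_{\wh x,\wh y}F$, together with one extra term specific to the Kolmogorov (rather than Wasserstein) metric. This is the standard input, for instance the Last--Peccati--Schulte inequalities, and it requires no stabilization, only finiteness of the relevant fourth moments, which \eqref{eqmom} provides.

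Next I would compute the difference operators of the particular functional $H_n = \sum_{\wh x\in\wh{\mcP}_n} h_n(\wh x,\wh{\mcP}_n)$. The first difference $D_{\wh z}H_n$ decomposes into the score $h_n(\wh z,\cdot)$ of the inserted point plus the perturbations $D_{\wh z}h_n(\wh x,\cdot)$ of the existing scores; by \refdef{decstab} each perturbation vanishes unless $\wh x\in\wh B(z,R_n)$, so $D_{\wh z}H_n$ depends only on the configuration within a ball of radius $O(R_n)$ about $z$. Likewise the second difference $D^2_{\wh x,\wh y}H_n$ is nonzero only when the stabilization regions of $x$ and $y$ overlap, that is, when $\|x-y\| = O(R_n)$.

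The heart of the argument is then the power counting. Each term in the abstract bound is an integral over two or three spatial variables against $n\bbQ$; stabilization pins all but one of these variables into an $O(R_n)$-neighbourhood of a reference point, so each constrained integration contributes a factor $nR_n^d = O(1)$ (using $\limsup_n nR_n^d<\infty$), while the single free integration yields the genuine factor $n$. The $L^4$-norms of the localized differences are bounded uniformly in $n$ and in location by \eqref{eqmom} via H\"older's inequality, the surplus exponent $p$ being exactly what lets one absorb the finitely many inserted points appearing in \eqref{eqmom} and pass from the $L^{4+p}$ bound to the $L^4$ integrability demanded above. Hence every term is $O(n)$ before normalization, and after dividing by the relevant power of $\var F$ and invoking the hypothesis $\var H_n \ge n/C$, the whole bound collapses to $O\big((\var H_n)^{-1/2}\big)$, as claimed. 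I expect the main obstacle to be precisely this bookkeeping: balancing the factors of $n$ from the free integrations against the localization gains $nR_n^d = O(1)$ and the variance normalization, and checking that the extra Kolmogorov-specific term scales the same way as the others.
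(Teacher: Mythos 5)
This theorem is not proved in the paper at all: it is imported, with simplified hypotheses, as Corollary 2.2 of \cite{lachieze2019normal}, so there is no internal proof to compare yours against --- the paper's sole contribution here is to replace the random exponentially-decaying stabilization radius of the source by a deterministic one (Definition \ref{decstab}). That said, your outline does reconstruct the strategy of the cited source's proof: the Malliavin--Stein second-order Poincar\'e bounds for Poisson functionals (Last--Peccati--Schulte) in the Kolmogorov distance, localization of the first- and second-order difference operators of $H_n$ via the stabilization radius, uniform moment bounds from \eqref{eqmom}, and the power counting $nR_n^d=O(1)$ combined with $\sup_n n/\var(H_n)\le C$ to collapse every term to $O\big((\var H_n)^{-1/2}\big)$. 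Two caveats. First, your account of the exponent $p$ is off: passing from $L^{4+p}$ to $L^4$ costs nothing on a probability space, and the up-to-seven inserted points in \eqref{eqmom} are not ``absorbed'' by $p$ --- they appear because the second-order Poincar\'e terms evaluate scores on configurations augmented by one or two extra points in each of several factors; the surplus $p$ is used in H\"older steps of the form $\ex[|X|^4\ind_A]\le(\ex|X|^{4+p})^{4/(4+p)}\,\pr[A]^{p/(4+p)}$ to exploit the decay of stabilization-failure probabilities, a step that becomes vacuous in the present setting precisely because the radius is deterministic. Second, the ``bookkeeping'' you defer is not a routine detail but the substance of the cited paper (different terms in the abstract bound carry different variance normalizations, $\var$, $\var^{3/2}$, $\var^2$, and must be checked separately); as written, your text is a correct road map to that proof rather than a proof.
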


Next, we use our birth-death Markov process $\eta_n(t)$ to construct a (static) marked Poisson process for which Theorem \ref{cltthm} will apply.
Start by fixing $T>0$, and define
\[
    \eta_n([0,T]) := \bigcup_{t\in[0,T]} \eta_n(t).
\]
From the definition of $\eta_n(t)$ we have that $\eta_n([0,T])$ is a Poisson process on $\Rd$ with intensity $ n (1+ T) \mathbb{Q}$.
Next, for each $x\in \eta_n([0,T])$,  set
\begin{equation}\label{eqn:def_B_L}
\begin{split}
    B_x &:= \inf\{t\ge 0 : x\in \eta_n(t)\},\\
    D_x &:= \sup\{t\ge 0 : x\in \eta_n(t)\},\\
    L_x &:= D_x-B_x.
\end{split}
\end{equation}
In other words, $B_x$ and $D_x$ are the birth and death times of $x$, respectively, and $L_x$ is its lifetime.
Note that $B_x\in [0,T]$ and $D_x\ge B_x$ (but is not necessarily in $[0,T]$).
The process $\eta_n(0)$ can be retrieved as a thinning of $\eta_n([0,T])$, and therefore
\[
    \pr[B_x = 0] = \frac{1}{1+ T}.
\]
 Since the arrival times of new points is a homogeneous Poisson process (in $t$), then conditioning on $B_x>0$, the birth time is uniformly distributed in $[0,T]$. Combining the last two observations, we can write
\[
    B_x = Z_xY_x,    
\]
where $Z_x,Y_x$ are independent, 
\[\pr[Z_x=0] = 1-\pr[Z_x = 1] = \frac{1}{1+ T},\]
and $Y_x\sim U[0,T]$.
In addition, note that the definition of $\eta_n$ is such that $L_x\hspace{-0.2pt}\sim\hspace{-0.2pt} \mathrm{Exponential(1)}$. 
Finally, given $\eta_n([0,T])$, the values $(B_x,L_x)$ are independent between different points.
To conclude, defining
\begin{equation}\label{eqn:marked_eta}
    \wh{\eta}_{n,T} := \{(x,M_x) : x\in \eta_n([0,T]), M_x = (B_x,L_x)\}, 
\end{equation}
we have that $\wh{\eta}_{n,T}$ is marked Poisson process with independent marks, and so meets the basic structure required for Theorem \ref{cltthm}.
Note  that for all $t\in [0,T]$, we can retrieve the point process $\eta_n(t)$ from $\wh{\eta}_{n,T}$ by taking
\[
\eta_n(t) = \{\wh{x} \in \wh\eta_{n,T} : \tau_t(\wh{x}) =1 \},
\]
where for $\wh{x} = (x,(B_x,L_x))$ we define
\begin{equation}\label{eqn:def_tau}
    \tau_t(\wh{x}) := \ind\{B_x \le t < B_x+L_x\}.
\end{equation}

Therefore, for any $T>t$, we have
\begin{equation}\label{eqn:f_n_tau}
    f_{n}(t) = \sum_{\mcS \subset \eta_n(t)} \xi_{k,r} (\mcS) = \sum_{\wh{\mcS}\subset \wh\eta_{n,T}} \xi_{k,r}(\wh{\mcS})  \tau_t(\wh{\mcS}),
\end{equation}
where $\tau_t({\wh{\mcS}}) := \prod_{\wh{x}\in\wh{\mcS}}\tau_t({\wh{x}})$, and abusing notation we assume that $\xi_{k,r}(\wh{\mcS})$ simply ignores the marks.

We are now ready to prove the convergence of the finite dimensional distributions.
\begin{proof}[Proof of Lemma \ref{finite}] 
Using the Cram\'er-Wold  Theorem  \cite[Theorem 29.4]{MR1324786}, it  suffices to show that
\[
    \frac{\sum_{i=1}^m \omega_i \bar{f}_{n}(t_i)}{\sqrt{\var \sum_{i=1}^m \omega_i \bar{f}_{n}(t_i) }} \xrightarrow{d}N(0,1),
\]
for any fixed $m\in \mathbb N$, $t_1,\ldots,t_m\geq 0$ and $\bs{\omega} = (\omega_1,\ldots,\omega_m)^\top \in \mathbb R^m\setminus \{\bs 0\}$,  where $\xrightarrow{d}$ denotes convergence in distribution. 

For the rest of the proof, fix $m$, $t_1<\ldots< t_m$, $\bs{\omega}$, and $k$. Next, taking the mark space to be  $\mathbb{M} = \mathbb{R}^2$ and using \eqref{eqn:marked_eta} and \eqref{eqn:def_tau},  set
\[
     h_n(\wh x,\wh{\eta}) \coloneqq 
     \frac{1}{k}\sum_{\substack{\wh{\mc S} \subset \wh{\eta}\\ \wh x\in \wh{\mcS}  } } \xi_{k,r}(\wh{\mcS})
     \sum_{i=1}^m {\omega_i} \tau_{t_i}(\wh{\mcS}).
\]
From \eqref{eqn:f_n_tau} we then have that
\begin{align*}
H_n &:= \sum_{\wh{x}\in \wh{\eta}_{n,T}} h_n(\wh{x}, \wh{\eta}_{n,T}) = \sum_{i=1}^m \omega_i f_n(t_i). 
\end{align*}
\efe{In addition, due to stationarity, we have
\begin{align*}
\frac{\sum_{i=1}^m \omega_i \bar{f}_{n}(t_i)}{\sqrt{\var \sum_{i=1}^m \omega_i \bar{f}_{n}(t_i) }} &= \frac{\sum_{i=1}^m \omega_i f_n(t_i) - \ex\left[\sum_{i=1}^m \omega_i f_n(t_i)\right]}{\sqrt{\var[f_n(0)]}\sqrt{\var \sum_{i=1}^m \omega_i \frac{f_{n}(t_i)}{\sqrt{\var[f_n(0)]}} } } = \frac{H_n-\ex[H_n]}{\sqrt{\var[H_n]}}.
\end{align*}}
Thus, in order to complete the proof we only need to verify that the conditions of Theorem \ref{cltthm} hold for every such $H_n$.

We start with the issue of locality.
Since $\xi_{k,r}(\mcS)$ is local due to \refass{localass}, the functions $h_n$ are  stabilized with  decaying radius   $R_n =\rmax  r$, where $\rmax$ is defined in \refass{localass}. Thus, under the conditions of the lemma,
$\limsup n(R_n)^d <\infty$, as required in Definition \ref{decstab}.
For the moment condition \eqref{eqmom}, note that $h_n$ can be bounded as follows
\[
    h_n(\wh x,\wh{\eta})\leq  c_{\bs\omega}
     \sum_{\substack{\wh{\mc S} \subset \wh{\eta}\cap \wh{B}(x,\rmax r) \\ \wh x\in \wh{\mcS}   } } \xi_{k,r}(\wh{\mcS}),
\]
where $\wh{B}(x,r) = B(x,r)\times \mathbb{R}^2$, and where $c_{\bs\omega}$ is a constant that depends on $\bs\omega$ and $k$. Therefore,  we have
 \[
      \ex\Big[\Big| h_n\big( \wh{x}, \wh{\eta}_{n,T}\cup \{\wh{x}\} \bigcup_{y\in \mc A} \{(y, M_{y})\} \big) \Big|^{5}\Big]
     \leq \big(c_{\bs\omega}\|\xi_{k}\|_{\infty}\big)^{5}   \ex\left[ \binom{7+\big|\wh{\eta}_{n,T}\cap \wh{B}(x,\rmax r)\big|}{k}^5\right],
 \]
  for any fixed $\mc{A}$ with $|\mc{A}|\le 7$, and any $\wh{x}\in\Rd\times \mathbb{R}^2$.  
 
 Writing $Z := \big|\wh{\eta}_{n,T}\cap \wh{B}(x,\rmax r)\big|$,  recall that $Z\sim\mathrm{Poisson}(\mu)$, with \[\mu=n(1+ T)\mathbb{Q}\big[B(x,\rmax r)\big].\]
 
 Since $nr^d$ is bounded, we have that $\mu$ is uniformly bounded in $\Rd$. In addition, note that $\binom{7+Z}{k}^5$ is a polynomial in $Z$ of degree $5k$. As the moments of $Z$ are themselves polynomials in $\mu$, we obtain a uniform upper bound for the right hand side above, and thus the $(4+p)$th moment condition \eqref{eqmom} holds for $p=1$.
 
 Finally, to check the last condition in Theorem \ref{cltthm}, we will use stationarity and the limiting covariance function given in \reflem{covgen}, which will be proven in the next section,
\[
     \lim_{n\to\infty} \frac{\var(H_n) }{\var\left(f_{n}(0)\right)}  =\sum_{i,j=1}^m \omega_i \omega_j\sum_{\ell=1}^k \lambda_\ell e^{-\ell|t_i-t_j|} 
     =  \sum_{\ell=1}^k \lambda_\ell\bs{\omega}^\top \bs T^{(\ell)} \bs{\omega},\nn
\]
for a set of positive constants $\lambda_1,\ldots,\lambda_k$, with the entries of the matrix $\bs{T}^{(\ell)}$ defined as 
 \[T^{(\ell)}_{ij} = e^{-\ell|t_i-t_j|}. \]

Since $e^{-\ell t}$ is a covariance function (of an OU process) it is immediate that 
$\bs{T}^{(\ell)}$ is a positive definite matrix.
Therefore, using \eqref{eqvarbound}, we have 
\[\frac{n}{\var(H_n)}\leq \frac{n }{ c \var(f_{n}(0))  }      \leq  \frac{c'}{(nr^d)^{k-1}},
\]
for constants $c, c'>0$.
Consequently, $\var(H_n) \to \infty$, which, together with the fact that we have established that all the conditions of Theorem \ref{cltthm} hold, completes the proof of the lemma.

\end{proof}




\subsection{The Limiting Covariance Function}

\begin{lemma}\label{covgen}
 If $nr^d\to\gamma\in(0,\infty)$, for any $t_1,t_2\geq 0$, the covariance of the normalized additive statistic $\bar f_{n}(t)$ satisfies
  \begin{align*}
    \lim_{n\to\infty}\cov[\bar{f}_{n}(t_1),\bar{f}_{n}(t_2)] = 
	     \sum\limits_{j=1}^{k}\lambda_{j} e^{-j  |t_1-t_2| } ,
\end{align*}
 where the $\lambda_{j}$ are positive constants (depending on $\gamma$, $q$, and $\xi_{k,1}$) satisfying  $\sum\limits_{j=1}^{k}\lambda_{j} = 1$.
\end{lemma}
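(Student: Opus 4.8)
The plan is to exploit stationarity to remove the normalization. Since $\var[f_n(t)]=\var[f_n(0)]$ for every $t$, we have $\cov[\bar f_n(t_1),\bar f_n(t_2)] = \cov[f_n(t_1),f_n(t_2)]/\var[f_n(0)]$, so it suffices to compute the unnormalized covariance and divide by the variance already obtained in the first-and-second-moment subsection. I would fix any $T>\max(t_1,t_2)$ and pass to the marked Poisson representation $\wh\eta_{n,T}$ through \eqref{eqn:f_n_tau}, writing $\ex[f_n(t_1)f_n(t_2)]$ as a double sum over subsets $\wh\mcS_1,\wh\mcS_2\subset\wh\eta_{n,T}$ and splitting it according to $i=|\wh\mcS_1\cap\wh\mcS_2|\in\{0,\ldots,k\}$.

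Applying the marked Mecke formula (\reflem{Palmhigh}) to the intersection pattern with $\ell=2$ and cell sizes $(k-i,k-i,i)$, each $i$-term factorizes, because $\xi_{k,r}$ depends only on positions while $\tau_t$ depends only on the independent marks. The position integral is exactly $\alpha_{k,i}$ from \eqref{probdef}, while the mark integral is a product of single-point survival factors. The key computation is this mark factor. Writing $p=\pr[\tau_t(\wh x)=1]=1/(1+T)$ (independent of $t$ by stationarity) and, for $t_1\le t_2$, $p_{12}=\pr[\tau_{t_1}(\wh x)=1,\tau_{t_2}(\wh x)=1]$, I would condition on $B_x=Z_xY_x$ and use the memorylessness of the lifetime $L_x\sim\mathrm{Exponential}(1)$ to obtain $p_{12}=p\,e^{-|t_1-t_2|}$. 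This single identity is the source of the exponential decay $e^{-j|t_1-t_2|}$ in the statement.

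Consequently the mark factor of the $i$-term is $p_{12}^{\,i}p^{2(k-i)}=p^{2k-i}e^{-i|t_1-t_2|}$, and since the Mecke prefactor carries $(n(1+T))^{2k-i}=n^{2k-i}p^{-(2k-i)}$, the $i$-term equals $\tfrac{n^{2k-i}}{i!((k-i)!)^2}\alpha_{k,i}e^{-i|t_1-t_2|}$; setting $t_1=t_2$ recovers \eqref{126}, a useful sanity check. The $i=0$ term equals $\tfrac{n^{2k}}{(k!)^2}\alpha_k^2=\ex[f_n(t_1)]\ex[f_n(t_2)]$ and cancels in the covariance, leaving $\cov[f_n(t_1),f_n(t_2)]=\sum_{i=1}^k\tfrac{n^{2k-i}}{i!((k-i)!)^2}\alpha_{k,i}e^{-i|t_1-t_2|}$, which is the variance formula with the factor $e^{-i|t_1-t_2|}$ inserted in each summand.

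Finally I would let $n\to\infty$. By \reflem{lemgeom} applied to each $i$-term (the intersection graph is connected for $i\ge1$, so $\#_\cI=1$), $\alpha_{k,i}\sim c_i\,r^{d(2k-i-1)}$, whence $n^{2k-i}\alpha_{k,i}\sim c_i(nr^d)^{2k-i}r^{-d}\to c_i\gamma^{2k-i}r^{-d}$. The common divergent factor $r^{-d}$ cancels against $\var[f_n(0)]$, yielding $\lim_n\cov[\bar f_n(t_1),\bar f_n(t_2)]=\sum_{i=1}^k\lambda_i e^{-i|t_1-t_2|}$ with $\lambda_i=\mu_i/\sum_{j=1}^k\mu_j$ and $\mu_i=c_i\gamma^{2k-i}/(i!((k-i)!)^2)$, which forces $\sum_i\lambda_i=1$. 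The remaining point is positivity of each $\lambda_i$, i.e. $c_i>0$: using Assumption~\ref{feas} and the openness of $\mcN$, I would fix the $i$ shared points inside a feasible configuration and perturb the $k-i$ private coordinates of the second subset (possible whenever $k-i\ge1$, and generically keeping the intersection exactly $i$) to exhibit a positive-measure open set of $(2k-i)$-point configurations on which both $k$-subsets are feasible; the case $i=k$ is the variance positivity \eqref{eqvarbound} already established. I expect the main work to be the joint-survival computation $p_{12}=p\,e^{-|t_1-t_2|}$ and this positivity verification, the remainder being bookkeeping with \reflem{Palmhigh} and the scaling from \reflem{lemgeom}.
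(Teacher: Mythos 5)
Your proposal is correct and follows essentially the same route as the paper's proof: stationarity to remove the normalization, the marked-Poisson representation with the generalized Mecke formula (\reflem{Palmhigh}), the joint-survival computation (the paper's $p_{0,\Delta}=e^{-\Delta}/(1+T)$, your $p_{12}=p\,e^{-|t_1-t_2|}$) as the sole source of the factors $e^{-j|t_1-t_2|}$, and \reflem{lemgeom} for the thermodynamic scaling, with the common divergent factor cancelling in the ratio. The only place you deviate is the positivity of the coefficients: the paper factorizes $\kappa_{k,j}$ through the partial integrals $\Xi_{k,1}$ and reduces positivity to $\int \xi_{k,1}(0,\mb y)\,\der \mb y>0$ via \eqref{feaseq0}, whereas you argue directly by perturbing the $k-i$ private points of the second subset around a feasible configuration inside the open set $\mcN$ (implicitly using translation invariance to re-anchor the second subset at the origin); both arguments are sound, yours being somewhat more elementary.
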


\begin{proof}[Proof of \reflem{covgen}]
Suppose that $t_1\leq t_2$, and let $\Delta = t_2-t_1$. Stationarity implies that it suffices to compute $\cov[f_n(0),f_n(\Delta)]$ for all $\Delta\geq 0$. Fix $T>\Delta$, and recall the definition of the marked Poisson process $\wh{\eta}_{n,T}$ in \eqref{eqn:marked_eta}.
Then,
\begin{align*}
    \ex[&f_{n}(0)f_{n}(\Delta)] = \ex\Big[\sum_{\wh{\mc S}_1, \wh{\mc S}_2 \subset \wh{\eta}_{n,T}} \xi_{k,r}(\wh{\mcS}_1) \xi_{k,r}(\wh{\mcS}_2) \tau_0(\wh{\mcS}_1)\tau_\Delta(\wh{\mcS}_2)\Big]
\end{align*}
Using the generalized Mecke's formula \eqref{Palmhigh} and the independence of marks, as for the case in  \eqref{126} we have
\begin{align}
    \ex[f_{n}(0)f_{n}(\Delta)] = \sum_{j=0}^{k}\frac{[n(1+ T)]^{2k-j}}{j!\big((k-j)!\big)^2}  \alpha_{k,j} p_0^{k-j} p_\Delta^{k-j} p_{0,\Delta}^j,
    \label{corr}
\end{align}
where the $\alpha_{k,j}$ are defined in \eqref{probdef}. Let $(B,L)$ denote the mark of an arbitrary point, i.e., they have the same distribution as $(B_x,L_x)$ defined in \eqref{eqn:def_B_L}. Then,
\begin{equation}\label{pdelta}
\begin{split}
    p_0 &= \pr[B=0] = \frac{1}{1+ T},   \\
    p_\Delta &= \pr[B\le \Delta < B+L] = \frac{1}{1+ T}e^{-\Delta} + \frac{ T}{1+ T} \int_0^\Delta \frac{1}{T} e^{-(\Delta-b)}db = \frac{1}{1+ T}, \\
    p_{0,\Delta} &= \pr[B=0, \Delta < L] = \frac{e^{-\Delta}}{1+ T}. 
\end{split}
\end{equation}
Thus, we have
\begin{align*}
    \ex[f_{n}(0)f_{n}(\Delta)] 
     = n^{2k}\sum_{j=0}^{k}\frac{\left(n^{-1}e^{- \Delta }\right)^j  \alpha_{k,j} }{j!\big((k-j)!\big)^2},
\end{align*}
and since $\ex[f_n(0)]=\ex[f_n(\Delta)] =\frac{n^k}{k!}\alpha_k$, and $\alpha_k^2 = \alpha_{k,0}$, we have 
\[
\cov[f_n(0),f_n(\Delta)] =  n^{2k}\sum_{j=1}^{k}\frac{\left(n^{-1}e^{- \Delta }\right)^j  \alpha_{k,j} }{j!\big((k-j)!\big)^2}.
\]
With this we obtain,
\begin{align}\label{eqn:cov_form}
    \cov[\bar{f}_{n}(t_1), \bar{f}_{n}(t_2)] &=
    \frac{\cov[f_n(0)f_n(\Delta)]}{\var[f_{n}(0)]}
     = \frac{ \sum_{j=1}^{k}   \frac{\left(n^{-1}e^{- \Delta }\right)^j \alpha_{k,j} }{j!((k-j)!)^2} }{\sum_{j=1}^{k}   \frac{n^{-j} \alpha_{k,j} }{j!((k-j)!)^2}}.
\end{align}

Note that from the definition of $\alpha_{k,j}$ \eqref{probdef}, and applying \reflem{lemgeom}, we have that, for every $j\geq 1$, there exists $\kappa_{k,j}\in \mathbb{R}^+$ such that,
\[
\lim_{n\to\infty}n^{-j}\alpha_{k,j}= \lim_{n\to\infty} (nr^d)^{-j} r^{d(2k-1)} \kappa_{k,j}, 
\]
where 
\begin{align*}
    \kappa_{k,j}\coloneqq \int_{\Rd}[q(x)]^{2k-j}\der x \int_{(\Rd)^{2k-j-1}} \xi_{k,1}(0, \mb y) \xi_{k,1}(\mb y') \prod_{i=1}^{2k-j-1} \der y_i,
\end{align*}
and where $(0,\mb y) = (0,y_1,\ldots,y_{k-1})$ and $\mb y' = (y_{k-j},\ldots, y_{2k-j-1})$.
Using $nr^d \to \gamma \in (0,\infty)$, if we define
\[
\tilde \lambda_{j} =\frac{\gamma^{-j}\kappa_{k,j}}{j!\big((k-j)!\big)^2},
\]
and $\lambda_{j} = \tilde{\lambda}_{j} / \sum_i\tilde{\lambda}_{i}$ then
\[
\lim_{n\to\infty}\cov[\bar{f}_{n}(t_1), \bar{f}_{n}(t_2)] = \sum_{j=1}^k \lambda_{j}e^{-j\Delta}.
\]
 Furthermore, for all $1\leq j\leq k$ we have that $\lambda_j> 0$, since using the properties of $\xi_{k,1}$ we have
\[    \kappa_{k,j}  
    = \int_{\Rd}[q(x)]^{2k-j}\der x \int_{(\Rd)^{j}} \Xi_{k,1}( \mb y'') \int_{\Rd} \Xi_{k,1}(\mb y''-z )  \der z\, \prod_{i=1}^{j} \der y_i >  0,  
\] where
\begin{align*}
    \Xi_{k,1}( \mb y'')\coloneqq \int_{(\Rd)^{k-j-1}} \xi_{k,1}( 0,\mb y)\prod_{i=j+1}^{k-1} \der y_i
\end{align*}
and $\mb y'' = (y_1,\ldots,y_j)$. The strict inequality follows from 
\begin{align*}
    \int_{(\Rd)^{j}}\Xi_{k,1}( \mb y'') \prod_{i=1}^{j} \der y_i = \int_{(\Rd)^{k-1}}\xi_{k,1}(0,\mb y) \prod_{i=1}^{k-1} \der y_i >0, 
\end{align*}
which is a consequence of  \eqref{feaseq0}. 
This completes the proof.
\end{proof}

\begin{remark}
Although our focus in this paper is on the thermodynamic regime,  note that the proof of \reflem{covgen} also works for the dense ($nr^d\to\infty$) and the sparse ($nr^d\to 0$) regimes. If $nr^d\to\infty$, then the dominant term in \eqref{eqn:cov_form} is for $j=1$. In contrast, if $nr^d \to 0$, then the dominant term is for $j=k$. It therefore follows that, for all $k$,
  \begin{align*}
    \lim_{n\to\infty}\cov[\bar{f}_{n}(t_1),\bar{f}_{n}(t_2)] = \begin{cases}
	    e^{-|t_1-t_2|} & \text{if}\quad n{r^d}\to \infty,\\
	    e^{-k|t_1-t_2|} & \text{if}\quad nr^d\to 0.
	\end{cases}
\end{align*}
\end{remark}

\subsection{Tightness}

\begin{lemma}\label{tight}
    If $nr^d\to\gamma\in(0,\infty)$, the sequence $\{\bar{f}_{n}\}_{n\ge 1}$ is tight.
\end{lemma}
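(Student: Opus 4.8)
The plan is to apply the tightness criterion of Theorem \ref{mainthm2} to $X_n = \bar f_n$, verifying conditions (C1) and (C2) with $\Upsilon = \Upsilon_1 = 2$. The starting point is the marked-process representation \eqref{eqn:f_n_tau}, which gives, for $T$ chosen large enough (depending on $T_0$) and the increment over $(t,t+h]$,
\[
f_n(t+h) - f_n(t) = \sum_{\wh{\mcS} \subset \wh\eta_{n,T}} \xi_{k,r}(\wh{\mcS})\big[\tau_{t+h}(\wh{\mcS}) - \tau_t(\wh{\mcS})\big].
\]
Since $\bar f_n$ subtracts a mean that is $t$-independent by stationarity and divides by $\sqrt{\var[f_n(0)]}$, the increments of $\bar f_n$ equal those of $f_n$ divided by $\sqrt{\var[f_n(0)]}$. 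Together with the variance estimate $\var[f_n(0)]\asymp n$, which follows from the lower bound \eqref{eqvarbound} and the matching upper bound read off from \eqref{126}, this reduces the problem to showing $\ex[(f_n(\delta)-f_n(0))^2]\le Cn\delta$ and $\ex[(f_n(t+h)-f_n(t))^2(f_n(t)-f_n(t-h))^2]\le Cn^2h^2$.

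To estimate these unnormalized moments I would expand them via the generalized Mecke formula (Lemma \ref{Palmhigh}) over the intersection patterns of the two (respectively four) subsets, working on the marked process $\wh\eta_{n,T}$ and using independence of the marks to factor each pattern's contribution into a geometric part, governed by Lemma \ref{lemgeom} and scaling like $r^{d(|\cI|-\#_\cI)}$, and a temporal part built from products of increment factors $\Delta_s\tau := \tau_{s+h}-\tau_s$. The crucial structural observation is that $\ex[\Delta_s\tau(\wh{\mcS})] = 0$ for any single subset, again by stationarity; hence any intersection pattern whose intersection graph has a connected component consisting of a single subset contributes zero. For the four-subset quantity this forces every surviving component to contain at least two subsets, so only one-component patterns (order $n$) and two-component patterns of two subsets each (order $n^2$) remain; in particular the a priori dangerous fully disconnected term of order $n^4$ vanishes, capping the $n$-order at $n^2$.

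For the surviving patterns I would bound the temporal factor. The alive-window of a subset is the interval $[\max_x B_x, \min_x(B_x+L_x))$, and since the death times are $\mathrm{Exponential}(1)$ and the birth times have bounded density, the probability that either window boundary crosses a fixed length-$h$ interval is $O(h)$, uniformly; thus $\ex[|\Delta_s\tau(\wh{\mcS})|]=O(h)$. The key point is that crossings required inside the two disjoint intervals $(t-h,t]$ and $(t,t+h]$ must be supplied by distinct boundary events, because a single point changes status only once, and therefore cost $O(h)$ each. Consequently each surviving pattern contributes a temporal factor $O(h^2)$: a two-component ($n^2$) pattern forces at least one crossing per component, while a one-component ($n$) pattern still requires one crossing in each of the two disjoint intervals. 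Summing the $O(n^2)\cdot O(h^2)$ and $O(n)\cdot O(h^2)$ contributions gives $\ex[(f_n(t+h)-f_n(t))^2(f_n(t)-f_n(t-h))^2]\le Cn^2h^2$, so (C2) holds with $\Upsilon_1=2,\ \Upsilon_2=2>1$. The same expansion for a single increment (only the one-component, order-$n$ pattern survives) yields $\ex[(f_n(\delta)-f_n(0))^2]\le Cn\delta$, and dividing by $\var[f_n(0)]\asymp n$ gives $\limsup_{n}\ex[|\bar f_n(\delta)-\bar f_n(0)|^2]\le C'\delta\to 0$, which is (C1).

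I expect the main obstacle to be the temporal decoupling estimate: rigorously showing that demanding a window-boundary crossing in each of the adjacent disjoint intervals $(t-h,t]$ and $(t,t+h]$ genuinely costs $O(h^2)$ even when the participating subsets share points, so that no single point can supply both crossings. Establishing the uniform $O(h)$ crossing bounds from the exponential lifetimes and the Poisson birth structure, and then tracking them consistently through the Mecke/intersection-pattern bookkeeping, is where the real work lies; the combinatorial vanishing coming from $\ex[\Delta_s\tau]=0$ is the mechanism that keeps the $n$-order at $n^2$ and makes the whole scheme close.
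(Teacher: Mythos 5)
Your strategy coincides with the paper's proof of this lemma in all structural respects: the reduction via \refthm{mainthm2} with $\Upsilon=\Upsilon_1=\Upsilon_2=2$, the marked-process representation, the expansion of the fourth moment by the generalized Mecke formula (\reflem{Palmhigh}) over intersection patterns, the factorization into a geometric part controlled by \reflem{lemgeom} (scaling as $r^{d(|\cI|-\#_{\cI})}$) and a temporal part, and the key cancellation that any pattern whose intersection graph has an isolated subset contributes zero because $\ex[\tau_{t,t+h}(\wh{\mcS})]=0$ by stationarity, which forces $\#_{\cI}\le 2$ and caps the order at $n^2$.

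The gap is exactly where you locate it: the claim that every surviving pattern carries a temporal factor of order $h^2$. Your heuristic for this step rests on the assertion that ``a single point changes status only once,'' which is false: a point changes status at its birth \emph{and} at its death, so one shared point can supply both crossings (born in $(t-h,t]$, dead in $(t,t+h]$). That event still has probability $O(h^2)$, but your argument as stated does not prove it, and the real difficulty is that when the four subsets overlap, the four increment factors $\tau_{0,h}(\mcT_1)\tau_{0,h}(\mcT_2)\tau_{h,2h}(\mcT_3)\tau_{h,2h}(\mcT_4)$ are dependent in a pattern-specific way, so one cannot simply multiply marginal $O(h)$ crossing probabilities. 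The paper closes this step in \reflem{lem:Phi} (Appendix \ref{exph}) by computing the temporal expectation \emph{exactly}: it equals $(1+T)^{-|\cI|}\Phi_{\cI}(h)$ where $\Phi_{\cI}$ is an explicit finite sum of exponentials whose exponents are linear combinations of the intersection counts $I_J$; one then verifies $\Phi_{\cI}(0)=\Phi_{\cI}'(0)=0$ (a cancellation between the sixteen terms, each $I_J$ appearing equally often in $E_1$ and $E_2$) and $\sup_{h\ge 0}|\Phi_{\cI}''(h)|<\infty$, and Taylor's theorem with Lagrange remainder gives $|\Phi_{\cI}(h)|\le C_{\cI}h^2$. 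Your scheme closes only after such a computation (or an equivalent rigorous decoupling argument) replaces the heuristic. Two minor points: for (C1) the paper avoids any new moment estimate by writing $\ex[(\bar f_n(\delta)-\bar f_n(0))^2]=2-2\cov[\bar f_n(\delta),\bar f_n(0)]$ and invoking \reflem{covgen}, whereas your increment-moment route is also viable (it is essentially what the paper does for $F_n$ in the continuity proof); and you do not need the matching upper bound $\var[f_n(0)]\le Cn$ at all --- the normalization only uses the lower bound \eqref{eqvarbound}.
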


\begin{proof}
We check the conditions for the tightness given in \refthm{mainthm2}. Condition (C1) holds trivially for $\Upsilon=2$ by noting that
\begin{align*}
    \ex[\bar{f}_{n}(\delta) -\bar{f}_{n}(0) ]^2 = 2- 2\cov[\bar{f}_{n}(\delta), \bar{f}_{n}(0) ] ,
\end{align*}
and that
\begin{align*}
    \lim_{\delta\to 0} \lim_{n\to\infty} \cov[\bar{f}_{n}(\delta) , \bar{f}_{n}(0) ] =1.
\end{align*}

We will show that (C2) holds for $\Upsilon_1=\Upsilon_2 = 2$. Due to stationarity, (C2) holds if there exists a constant $C_f$ such that
\begin{align*}
     \ex[(f_{n}(2h)-f_{n}(h))^{2} (f_{n}(h)-f_{n}(0))^{2} ]&\leq C_f h^2 \left(\var[f_{n}(0)]\right)^2.
\end{align*}
From to \eqref{eqvarbound}, we have that
$\var[f_{n}(0)] \geq \kappa_k n$.
Thus, it is enough to show that
\begin{align}
     \ex[(f_{n}(2h)-f_{n}(h))^{2} (f_{n}(h)-f_{n}(0))^{2} ]&\leq C h^2 n^2, \label{450}
\end{align}
for some constant $C>0$.
We would like to employ our marked process representation $\wh{\eta}_{n,T}$ \eqref{eqn:marked_eta}.
Recall the definition of $\tau_t(\wh{x})$ \eqref{eqn:def_tau}, and $\tau_t(\wh{\mcS})$. For $t_1 < t_2$, we also define
\begin{align*}
\tau_{t_1,t_2}(\wh{x}) &\coloneqq \tau_{t_2}(\wh{x}) - \tau_{t_1}(\wh{x}),\\ 
    \tau_{t_1,t_2}(\wh{\mcS}) &\coloneqq \tau_{t_2}(\wh{\mcS}) - \tau_{t_1}(\wh{\mcS}).
\end{align*}
Fix any $T > T_0+1$, then for every $t,h>0$ we have
\[
    f_n(t+h) - f_n(t) = \sum_{\wh{\mcS}\subset \wh\eta_{n,T}} \xi_{k,r}(\wh{\mcS})  \tau_{t,t+h}(\wh{\mcS}).
\]
Thus, we can write
\begin{align}
     (f_{n}(2h)-f_{n}(h))^{2} (f_{n}(h)-f_{n}(0))^{2} =\sum_{\bar{\mcS} \in \left(\fin({\wh{\eta}_{n,T})}\right)^4} g_h(\bar{\mcS}) \label{tig},
\end{align}
where $\bar{\mcS} = (\mcS_1,\mcS_2,\mcS_3,\mcS_4)$ ranges over all possible quadruplets of finite subsets of $\wh{\eta}_{n,T}$, and
\begin{equation}
    \label{eqn:def_g}
    g_h(\bar{\mcS}) :=  \tau_{0,h}(\mcS_1)\tau_{0,h}(\mcS_2)\tau_{h,2h}(\mcS_3)\tau_{h,2h}(\mcS_4)\prod_{i=1}^4 \xi_{k,r}(\mcS_i).
\end{equation}
Note that $g_h$ is nonzero only if $|\mcS_i| = k$ for all $i$. However, the intersections $\mcS_i\cap \mcS_j$ can be of any size (up to $k$). Recalling Definition \ref{defn:pattern}, in order to explore all quadruplets $\bar{\mcS}$ we can concentrate on all possible intersection patterns $\cI = \cI_{\ell}$, with $\ell=4$.
We can thus write the right hand side of \eqref{tig} as
\[
\sum_{\bar{\mcS} \in \left(\fin({\wh{\eta}_{n,T})}\right)^4} g_h(\bar{\mcS})  = 
    \sum_{\cI} \sum_{\bar{\mcS} \in \left(\fin({\wh{\eta}_{n,T})}\right)^4} \ind\{\bar{\mcS}\in \cI\}  g_h( \bar{\mcS}). 
\]

In order to evaluate the expectation of the sum above, we need some more notation. Fix an intersection pattern $\cI$, and let $\wh{\mb X}\coloneqq (\wh{X}_1,\ldots,\wh{X}_{|\cI|})$ be iid marked points in $\Rd\times \mathbb{R}^2$. 
Next, set 
\[\bar\mcT = (\mcT_1,\mcT_2,\mcT_3,\mcT_4) =  \Pi_{\cI}(\wh{\mb X}),\]
so that $\bar\mcT$ is actually a  splitting of the variables according to $\cI$.
Using \reflem{Palmhigh} we have
\begin{equation}\label{eqn:E_I}
E_{\cI} := \ex\left[\sum_{\bar{\mcS} \in \left(\fin({\wh{\eta}_{n,T})}\right)^4} \ind\{\bar{\mcS}\in \cI\}  g_h( \bar{\mcS})\right] = \frac{\big(n(1+ T)\big)^{|\cI|}}{\prod_{J}I_{J}!} \ex[g_h(\bar{\mcT})].
\end{equation}
From Lemma \ref{lem:Phi} in Appendix \ref{exph}, we have that 
\[
\ex[g_{h}(\bar{\mcT})] = \frac{1}{(1+ T)^{|\cI|}}\Phi_{\cI}(h)\ex\Big[\prod_{i=1}^4 \xi_{k,r}(\mcT_i)\Big], 
\]
where $\Phi_{\cI}(h)$ is a continuous function defined in \eqref{eqn:def_Phi}. Using \eqref{PhicI2} and 
Taylor's theorem with the Lagrange form of remainder, for any given $h>0$, there exists $h^*\in [0,h]$ such that
\begin{align}\label{PhicI}
|\Phi_{\cI}(h)| = \left|h^2\frac{\Phi_{\cI}''(h^*)}{2}\right| \le C_{\cI} h^2,
\end{align}
for some $C_{\cI}>0$.
Finally, from \reflem{lemgeom}, we have that
\begin{align*}
    \ex\Big[\prod_{i=1}^4 \xi_{k,r}(\mcT_i)\Big] \le C'_{\cI} r^{d(|\cI| - \#_{\cI})}.
\end{align*}
Putting everything back into \eqref{eqn:E_I}, we obtain
\[
|E_{\cI}| \le C_{\cI}C_{\cI}' h^2  \frac{n^{|\cI|}}{\prod_{J}I_{J}!} r^{d(|\cI| - \#_{\cI})} = C_{\cI}'' h^2 n^{\#_{\cI}}(nr^d)^{|\cI|-\#_{\cI}}.
\]
Finally, note that if $\cI$ is such that  there exists $i$ where $\mcT_i \cap \mcT_j = \emptyset$ for all $j\ne i$, then we have from \eqref{eqn:def_g} that 
$\ex[g_h(\bar\mcT)]= 0$.
Therefore, we can assume that $\#_{\cI}$ is either $1$ or $2$. In addition, since we are in the thermodynamic limit, we have $nr^d\to\gamma$. Thus, we conclude that
\[
|E_{\cI}| \le C_{\cI}'''h^2 n^{2}.
\]
Since the collection of all possible patterns $\cI$ is finite and independent of $n$, we have established \eqref{450}, concluding the proof.

\end{proof}

\begin{proof}[Proof of \refthm{OU}]
 The statement of the theorem is a straightforward consequence of Lemmas \ref{finite}, \ref{covgen}, \ref{tight}, and \refthm{mainthm}.
\end{proof}

\section{Proof of \refthm{OU2}}\label{proof2}\sec
While the general structure of the proof of \refthm{OU2} is virtually the same as that of \refthm{OU} in the previous subsection, some of the details are quite different, and so we will take a slightly different route through the various steps.

\subsection{Moments and the Limiting Covariance Function}

\begin{lemma}\label{covgen2}
If $nr^d\to\gamma\in(0,\infty)$, then for any $t_1,t_2\geq 0$, the covariance of the normalized additive statistic $\bar F_{n}(t)$ satisfies
\[
\lim_{n\to\infty}\cov[\bar F_{n}(t_1),\bar F_n(t_2)] = \sum_{l=1}^\infty \Lambda_l e^{-l |t_1-t_2| } 
\]
for some $\Lambda_l>0$ (depending on $\gamma$, $q$,  $\xi_{k,1}$, and $N_1$) with $\sum_{l=1}^\infty  \Lambda_l=1$.

\end{lemma}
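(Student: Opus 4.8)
The plan is to follow the same template as the proof of \reflem{covgen}, with the key new ingredient being the vacancy indicators, which couple the two subsets through the geometry of their neighborhoods rather than only through shared points. By stationarity it suffices to compute $\cov[F_n(0), F_n(\Delta)]$ for $\Delta = |t_1-t_2|\ge 0$. I would pass to the marked Poisson representation $\wh\eta_{n,T}$ of \eqref{eqn:marked_eta} (fixing any $T>\Delta$) and expand $\ex[F_n(0)F_n(\Delta)]$ as a double sum over subsets $\wh{\mcS}_1,\wh{\mcS}_2$, grouped by the number $j\in\{0,\ldots,k\}$ of points they share. Applying the generalized Mecke formula (\reflem{Palmhigh}) conditions on the $2k-j$ points of $\mcS_1\cup\mcS_2$ and leaves two things: the $\tau$-factors forcing $\mcS_1$ alive at $0$ and $\mcS_2$ alive at $\Delta$ (so each of the $j$ shared points must be alive at both times, contributing $p_{0,\Delta}^{\,j}=(e^{-\Delta}/(1+T))^j$ exactly as in \eqref{corr}), together with the expectation of the product of the two vacancy indicators over the remaining independent marked Poisson points.

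The crux is this vacancy expectation. Because $\wh\eta_{n,T}$ is Poisson, the probability that no background point is alive-at-$0$ inside $N_r(\mcS_1)$ and simultaneously none is alive-at-$\Delta$ inside $N_r(\mcS_2)$ is a void probability $\exp(-\Lambda_{\mathrm{forb}})$, whose forbidden intensity splits over the disjoint regions $N_r(\mcS_1)\setminus N_r(\mcS_2)$, $N_r(\mcS_2)\setminus N_r(\mcS_1)$ and $N_r(\mcS_1)\cap N_r(\mcS_2)$. Using the mark probabilities \eqref{pdelta}, the first two regions each contribute $p_0=p_\Delta=1/(1+T)$ per point, while the overlap contributes $p_0+p_\Delta-p_{0,\Delta}=(2-e^{-\Delta})/(1+T)$ (a point in the overlap is harmless only if alive at neither time). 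Hence the void probability equals
\[
\exp\!\big(-n\bbQ(N_r(\mcS_1)) - n\bbQ(N_r(\mcS_2)) + n e^{-\Delta}\bbQ(N_r(\mcS_1)\cap N_r(\mcS_2))\big).
\]
The first two factors are exactly what appears in $\ex[F_n(0)]\,\ex[F_n(\Delta)]$, so upon forming the covariance they will, to leading order, be cancelled; the genuinely new factor is $\exp(n e^{-\Delta}\bbQ(N_r(\mcS_1)\cap N_r(\mcS_2)))$. Since $\diam N_r(\mcS)\le\beta_k\max(r,\diam\mcS)$ (Assumption \ref{loc2}) the overlap has $\bbQ$-measure of order $r^d$, and $nr^d\to\gamma$ keeps $n\bbQ(\text{overlap})$ bounded; expanding this exponential as $\sum_{l\ge 0}\frac{1}{l!}\big(n e^{-\Delta}\bbQ(\text{overlap})\big)^l=\sum_{l\ge 0}\frac{(n\bbQ(\text{overlap}))^l}{l!}e^{-l\Delta}$ is precisely the mechanism producing an \emph{infinite} superposition, with total decay exponent $m=j+l$ (the shared points cap $j$ at $k$, but $l$ is unbounded, which is why the sum is infinite here whereas it was finite in \reflem{covgen}).

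I would then collect terms by $m=j+l$, apply \reflem{lemgeom} (together with the affine invariance \ref{linF} and localization \ref{loc2} of $N_r$, and the almost-everywhere continuity of $q$) to evaluate the $r\to 0$ limit of each coefficient as a convergent integral, and subtract $\ex[F_n(0)]\,\ex[F_n(\Delta)]$, which removes exactly the $(j,l)=(0,0)$ term. Normalizing by $\var[F_n(0)]$ then gives $\lim_n\cov[\bar F_n(t_1),\bar F_n(t_2)]=\sum_{m=1}^\infty\Lambda_m e^{-m\Delta}$; evaluating at $\Delta=0$ forces $\sum_m\Lambda_m=1$, and strict positivity $\Lambda_m>0$ for every $m$ follows from the joint feasibility Assumption \ref{jointfeas}, which guarantees $\xi_{k,1}(0,\mb y)\vol[N_1(0,\mb y)]>0$ on an open set, so that both the $\xi$-integrals and the overlap-volume integrals are strictly positive. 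The main obstacle I anticipate is making the term-by-term passage to the limit rigorous: one must justify interchanging the $n\to\infty$ limit with the infinite sum over $l$ (a dominated-convergence / uniform-summability argument using boundedness of $n\bbQ(\text{overlap})$ and of $\|\xi_{k,1}\|_\infty$), and must carefully account for the extra indicator constraints arising when points of $\mcS_2$ lie in $N_r(\mcS_1)$ (and vice versa), which have to be folded into a suitably generalized geometric integrand before \reflem{lemgeom} can be applied.
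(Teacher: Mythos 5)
Your proposal follows essentially the same route as the paper's proof: the marked-process Mecke expansion indexed by the number $j$ of shared points, the three-region void-probability computation giving the factor $\exp\big(-n\bbQ[N_r(\mcS_1)]-n\bbQ[N_r(\mcS_2)]+ne^{-\Delta}\bbQ[N_r(\mcS_1)\cap N_r(\mcS_2)]\big)$, the expansion of the overlap exponential into a power series producing the infinite superposition indexed by the total exponent $j+l$, and positivity and normalization via Assumption \ref{jointfeas} and evaluation at $\Delta=0$. The interchange-of-limits issue you flag is precisely what the paper resolves in \refapp{alphaproof}, by taking the $n\to\infty$ limit first via dominated convergence and only then expanding the limiting exponential and applying Fubini to define the constants $\kappa_{k,j,l}$.
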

 
 \begin{proof}
 We start with first moments. Observe that 
  \begin{align}
    F_{n}(t) = \sum_{\wh{\mcS}\subset \wh\eta_{n,T}} \xi_{k,r}(\wh{\mcS})  \tau_t(\wh{\mcS})\tau^*_t\big( (\wh{\eta}_{n,T}\setminus \wh{\mcS})\cap N_r(\wh\mcS)\big),  \label{Fnt2}
  \end{align}
 where $\wh{\eta}_{n,T}$ is the same marked process we defined previously, and 
  \begin{align*}
     \tau^*_t({\wh{\mc{A}}}) \coloneqq \prod_{\wh{x}\in\wh{\mc{A}}}[1-\tau_t({\wh{x}})]
 \end{align*}
 for some $\wh{\mc{A}}\subset\Rd$.
  Using stationarity and  Mecke's formula (\reflem{Palmhigh}),
\begin{align*}
    \ex[F_n(t)] = \ex[F_n(0)]=\frac{n^k}{k!}\ex\left[\xi_{k,r}(\mb{X}) \tau^*_t\big(\wh{\mcP}_{n}\cap N_r(\mb{X}) \big)\right], 
\end{align*}
where $\mb{X}$ is a $k$-tuple of iid points with density $q$, and $\wh{\mcP}_{n}$ is a marked Poisson point process with density $nq$, independent of $\mb{X}$.
Note that using the void probabilities for Poisson processes,
\begin{align*}
    \ex\left[\tau^*_t\big(\wh{\mcP}_{n}\cap N_r(\mb{X}) \big)\Big|\mb{X} \right] = e^{-n\bbQ[N_r(\mb{X})]}.
\end{align*}
Therefore,
\[
    \ex[F_n(t)] = \frac{n^k}{k!}\ex\left[\xi_{k,r}(\mb{X}) e^{-n\bbQ[N_r(\mb{X})]}\right].
\]

For the second moment, we use again Mecke's formula. From \eqref{Fnt2}, similarly to \eqref{corr}, we can write
\begin{align}
    \ex[F_{n}(0)F_{n}(\Delta)] = \sum_{j=0}^{k}\frac{[n(1+ T)]^{2k-j}}{j!((k-j)!)^2}  \alpha_{k,j}^\Delta p_0^{k-j} p_\Delta^{k-j} p_{0,\Delta}^j,\label{Ncorr}
\end{align}
where 
\[
    \alpha_{k,j}^\Delta\coloneqq \ex\Big[\xi_{k,r}(\mb{X}) \xi_{k,r}(\mb{X}') \tau^*_0\big(\wh{\mcP}_{n(1+ T)}\cap N_r(\mb{X}) \big) \tau^*_{\Delta}\big(\wh{\mcP}_{n(1+ T)}\cap N_r(\mb{X}') \big) \Big],
\]
and $\mb X'\coloneqq (X_{k-j+1},\ldots, X_{2k-j})$ are iid points with density $q$.
Note that,
\begin{align}
\begin{split}\label{tau*0}
\tau^*_0\big(\wh{\mcP}_{n(1+ T)}&\cap N_r(\mb{X}) \big)    \tau^*_\Delta\big(\wh{\mcP}_{n(1+ T)}\cap N_r(\mb{X}') \big)\\ 
& = \prod_{\substack{x\in\wh{\mcP}_{n(1+ T)} \\ x\in N_r(\mb{X})\setminus N_r(\mb{X}')}} \ind\{B_x>0\}
\prod_{\substack{x\in\wh{\mcP}_{n(1+ T)} \\ x\in  N_r(\mb{X}')\setminus N_r(\mb{X}) }} \ind\{B_x>\Delta\;\mathrm{or}\; B_x+L_x\leq \Delta\}
     \\&\qquad\qquad\times\prod_{\substack{x\in\wh{\mcP}_{n(1+ T)} \\ x\in N_r(\mb{X})\cap N_r(\mb{X}')}} \ind\{B_x>0\}\ind\{B_x>\Delta\;\mathrm{or}\; B_x+L_x\leq \Delta\}.
     \end{split}
\end{align}
Note that using the independence of birth and lifetimes of different particles, the conditional expectation of the first term above, given $\mb X$ and $\mb X'$, can be written as
\begin{align}
    \sum_{\ell=0}^{\infty}\left(1-\pr[B=0]\right)^\ell \pr\left[\left|\wh{\mcP}_{n(1+T)}\cap \left(N_r(\mb{X})\setminus N_r(\mb{X}')\right)\right| = \ell\right], \label{sumell0}  
\end{align}
which is in the form of the moment generating function of $\left|\wh{\mcP}_{n(1+T)}\cap \left(N_r(\mb{X})\setminus N_r(\mb{X}')\right)\right|$, which is Poisson distributed with mean $n(1+T)\mathbb Q\left[N_r(\mb{X})\setminus N_r(\mb{X}')\right]$. Therefore, using \eqref{pdelta} for $p_0=\pr[B=0]$, the sum in \eqref{sumell0} evaluates to  
\begin{align*}
    e^{-p_0 n(1+T)\mathbb Q\left[N_r(\mb{X})\setminus N_r(\mb{X}')\right]}  = e^{-n\mathbb Q\left[N_r(\mb{X})\setminus N_r(\mb{X}')\right]}.
\end{align*}
Through the same arguments, and the following
\begin{align*}
    \pr[B_x>\Delta\;\mathrm{or}\; B_x+L_x\leq \Delta] = 1-p_{\Delta},
\end{align*}
we evaluate the conditional expectation of the second term of \eqref{tau*0} as $e^{-n\mathbb Q\left[N_r(\mb{X}')\setminus N_r(\mb{X})\right]}$. Lastly, we use that for any $x$,
\begin{align*}
    \pr\left[\ind\{B_x>0\} = \ind\{B_x>\Delta\;\mathrm{or}\; B_x+L_x\leq \Delta\} = 1\right] &=  1- \pr[ B_x = 0 \;\mathrm{or}\; B_x \leq \Delta < B_x + L_x] \\
    &\hspace{-0.1em}=1-(p_0 + p_\Delta - p_{0,\Delta}) = \frac{T+e^{-\Delta}-1}{1+T}
\end{align*}
to find that the conditional expectation of the third term is $e^{-n(2-e^{-\Delta})\bbQ\left[N_r(\mb{X}) \cap N_r(\mb{X}')\right]}$. 
Together with the independence of the three products in \eqref{tau*0}, we obtain
\begin{align*}
     &\ex\left[\tau^*_0\big(\wh{\mcP}_{n(1+ T)}\cap N_r(\mb{X}) \big)    \tau^*_\Delta\big(\wh{\mcP}_{n(1+ T)}\cap N_r(\mb{X}') \big)\Big|\mb{X},\mb{X}'\right] \\&\qquad = \exp\left\{-n\left(\bbQ\left[N_r(\mb{X})\right] +\bbQ\left[N_r(\mb{X}')\right] -e^{-\Delta}\bbQ\left[N_r(\mb{X}) \cap N_r(\mb{X}')\right]  \right)\right\}.
\end{align*}
Note also that
\[    \ex[F_n(0)]\ex[F_{n}(\Delta)] =\frac{n^{2k}}{(k!)^2} \alpha_{k,0}^{\infty},
\]
where
\begin{align*}
    \alpha_{k,0}^{\infty}\coloneqq \ex\Big\{\xi_{k,r}(\mb{X}) e^{-n\bbQ\left[N_r(\mb{X})\right]}&
    \xi_{k,r}(\mb{X}') e^{-n\bbQ\left[N_r(\mb{X}')\right]}\Big\}. 
\end{align*}
Combined with \eqref{Ncorr}, we have
\begin{align}
\cov[F_{n}(0),F_{n}(\Delta)] =  n^{2k}\left\{ \frac{\alpha_{k,0}^\Delta- \alpha_{k,0}^{\infty} }{(k!)^2}+\sum_{j=1}^{k}\frac{\left(n^{-1}e^{- \Delta }\right)^j  \alpha_{k,j}^\Delta }{j!((k-j)!)^2}\right\}. \label{coveul2}
\end{align}

To simplify further, we note the following two results, proofs of which will be given in \refapp{alphaproof}.
For all $k\geq 2$, if $nr^d\to\gamma\in(0,\infty)$ then
\begin{equation}\label{alphaasymp}
\begin{split}
    \lim_{n\to\infty}n^{2k} r^{d} (\alpha_{k,0}^\Delta- \alpha_{k,0}^\infty) &=  \gamma^{2k} \sum_{l=1}^\infty e^{-l \Delta }\frac{\gamma^l}{l!} \kappa_{k,0,l},\q\text{and}\\
    \lim_{n\to\infty}n^{2k-j} r^d \alpha_{k,j}^\Delta &=  \gamma^{2k-j}  \sum_{l=0}^\infty e^{-l \Delta }\frac{\gamma^l}{l!} \kappa_{k,j,l},\q \textrm{for all $1\leq j\leq k$,}
    \end{split}
\end{equation}
    for constants $\kappa_{k,j,l}$ given in \eqref{kappakjl}.
Combined with \eqref{coveul2}, we have
\begin{align*}
\lim_{n\to\infty}r^d\cov[F_{n}(0),F_{n}(\Delta)] =&  \gamma^{2k} \sum_{l=0}^\infty \sum_{j=1}^{k}\frac{\kappa_{k,j,l}  \gamma^{l-j}  }{l! j!((k-j)!)^2} e^{-(l+j) \Delta }+ \gamma^{2k} \sum_{l=1}^\infty \frac{\kappa_{k,0,l}  \gamma^{l}  }{l! (k!)^2} e^{-l \Delta }\nn\\
=&  \sum_{l=1}^\infty \tilde\Lambda_l e^{-l \Delta },
\end{align*}
where
\begin{align*}
    \tilde\Lambda_l \coloneqq \sum_{j=0}^{\min(l, k)}\frac{\gamma^{2k+l-2j} \kappa_{k,j,l}    }{ (l-j)! j!((k-j)!)^2}.
\end{align*}
\efe{Since each step in calculating the covariance still holds for $\Delta=0$, we also have
\begin{align*}
    \lim_{n\to\infty}r^d\var[F_{n}(0)] = \sum_{l=1}^\infty \tilde\Lambda_l.
\end{align*}
Thus, we obtain the desired result by setting $\Lambda_{l} = \tilde{\Lambda}_{l} / \sum_i\tilde{\Lambda}_{i}$, and using the fact that due to stationarity,
\[\cov[\bar F_{n}(t_1),\bar F_n(t_2)] = \frac{\cov[ F_{n}(t_1), F_n(t_2)]}{\var[F_n(0)]}.\]
} The observation that $\Lambda_l>0$ follows from 
\begin{align*}
    \tilde\Lambda_l\geq \frac{\gamma^{2k+l} \kappa_{k,0,l}    }{ l! (k!)^2},
\end{align*}
and \eqref{kappak0l}. This also implies that
\begin{align}\label{varF}
    \liminf_{n\to\infty}\frac{\var[F_{n}(0)]}{n} > 0,
\end{align}
which will be required later.
\end{proof}

\subsection{Finite Dimensional Distributions}

\begin{lemma}\label{finiteF}
If $\lim\limits_{n\to\infty}nr^d\to\gamma\in(0,\infty)$ then the finite dimensional distributions of $\bar{F}_{n}(t)$ converge  to multivariate Gaussian .
\end{lemma}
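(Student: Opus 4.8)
The plan is to mirror the proof of \reflem{finite}, substituting the \emph{exclusive} functional for the additive one and invoking \reflem{covgen2} in place of \reflem{covgen}. By the Cram\'er--Wold device it again suffices to prove a univariate central limit theorem for $\sum_{i=1}^m \omega_i \bar F_{n}(t_i)$, for arbitrary $m$, distinct times $t_1<\cdots<t_m$, and $\bs\omega\in\mathbb R^m\setminus\{\bs 0\}$. Using the marked representation \eqref{Fnt2}, I would define the local function
\[
h_n(\wh x,\wh\eta) := \frac{1}{k}\sum_{\substack{\wh{\mcS}\subset\wh\eta\\ \wh x\in\wh{\mcS}}} \xi_{k,r}(\wh{\mcS})\sum_{i=1}^m \omega_i\,\tau_{t_i}(\wh{\mcS})\,\tau^*_{t_i}\big((\wh\eta\setminus\wh{\mcS})\cap N_r(\wh{\mcS})\big),
\]
so that $H_n:=\sum_{\wh x\in\wh\eta_{n,T}} h_n(\wh x,\wh\eta_{n,T})=\sum_{i=1}^m\omega_i F_n(t_i)$, and, exactly as in \reflem{finite}, stationarity identifies the normalized linear combination with $(H_n-\ex H_n)/\sqrt{\var H_n}$. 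It then remains to verify the three hypotheses of \refthm{cltthm}.

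The one genuinely new point is \textbf{decaying stabilization}. If $h_n(\wh x,\cdot)$ is nonzero then some $\wh{\mcS}\ni\wh x$ has $\xi_{k,r}(\wh{\mcS})\ne 0$, so \refass{localass} forces $\diam(\mcS)\le\rmax r$ and hence $\mcS\subset B(x,\rmax r)$; the vacancy factor $\tau^*_{t_i}$ depends only on points falling in $N_r(\mcS)$, whose diameter is at most $\beta_k\max(r,\diam(\mcS))\le\beta_k\rmax r$ by \refass{loc2}, and which is anchored to $\mcS$ through the affine invariance \refass{linF}. Consequently $h_n$ is stabilized with the deterministic radius $R_n=c_k r$ for a constant $c_k$ depending on $k,\rmax,\beta_k$, and $\limsup_n nR_n^d=c_k^d\gamma<\infty$. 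The \textbf{$(4+p)$th moment bound} is identical to \reflem{finite}: since $0\le\tau^*_{t_i}\le 1$ and $0\le\tau_{t_i}\le 1$, one has the same domination of $|h_n(\wh x,\wh\eta)|$ by $c_{\bs\omega}$ times the sum of $\xi_{k,r}(\wh{\mcS})$ over subsets $\wh{\mcS}\subset\wh\eta\cap\wh B(x,R_n)$ containing $\wh x$, and the Poisson moment computation gives \eqref{eqmom} with $p=1$.

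The remaining hypothesis, $\sup_n n/\var(H_n)<\infty$, is where \reflem{covgen2} enters. By stationarity and that lemma,
\[
\lim_{n\to\infty}\frac{\var(H_n)}{\var(F_n(0))}=\sum_{\ell=1}^\infty \Lambda_\ell\,\bs\omega^\top\bs{T}^{(\ell)}\bs\omega,\qquad T^{(\ell)}_{ij}=e^{-\ell|t_i-t_j|}.
\]
Each $\bs{T}^{(\ell)}$ is positive definite, being an OU covariance matrix, and $\Lambda_\ell>0$ with $\sum_\ell\Lambda_\ell=1$, so the limit is a convex combination of positive quantities with strictly positive weights and is therefore strictly positive for $\bs\omega\ne\bs 0$; convergence of the series is guaranteed by $\sum_\ell\Lambda_\ell=1$. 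Combined with \eqref{varF}, this gives $\var(H_n)\ge c\,n$ for some $c>0$ and all large $n$, so $n/\var(H_n)$ is bounded and $\var(H_n)\to\infty$, and \refthm{cltthm} then yields the univariate CLT, completing the proof. The main technical obstacle is thus the stabilization step, since the exclusion region $N_r(\mcS)$ enlarges the domain of dependence relative to \reflem{finite}; this is precisely what \refass{loc2} and \refass{linF} are designed to control, after which the now-infinite superposition in the limiting covariance causes no difficulty beyond checking positivity of the (convergent) quadratic form.
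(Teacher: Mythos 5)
Your proposal is correct and follows essentially the same route as the paper's proof: the same marked-process functional $h^*_n$, stabilization with a deterministic radius of order $r$ via Assumptions \ref{linF} and \ref{loc2}, the moment bound by domination $|h^*_n|\le|h_n|$ reducing to the computation in \reflem{finite}, and the variance lower bound from \reflem{covgen2} together with \eqref{varF}. The only cosmetic difference is that you spell out the infinite-superposition quadratic form $\sum_{\ell}\Lambda_\ell\,\bs\omega^\top\bs T^{(\ell)}\bs\omega$ explicitly, whereas the paper defers this step to the argument already given in \reflem{finite}.
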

\begin{proof}
We will again apply  \refthm{cltthm}  to prove this lemma, following the same steps as used in the proof of \reflem{finite}, the notation of which we now adopt freely. Note that,
 \begin{align*}
     \sum_{i=1}^m \omega_i F_n(t_i) = \sum_{\wh{x}\in \wh{\eta}_{n,T}} {h}^*_n(\wh{x}, \wh{\eta}_{n,T}), 
 \end{align*}
 with
 \begin{align*}
     {h}^*_n(\wh{x}, \wh{\eta}_{n,T})\coloneqq \frac{1}{k}\sum_{\substack{\wh{\mc S} \subset \wh{\eta}_{n,T} \\ \wh x\in \wh{\mcS}  } } \xi_{k,r}(\wh{\mcS})
     \sum_{i=1}^m {\omega_i} \tau_{t_i}(\wh{\mcS})\tau^*_{t_i}\big( (\wh{\eta}_{n,T}\setminus \wh{\mcS})\cap N_r(\wh\mcS)\big). 
 \end{align*}

Since $N_r(\wh\mcS)\subset B\big(x, \beta_k \max(\diam(\mcS), r)\big)$, due to \refass{loc2}, we have that ${h}^*_n(\wh{x}, \allowbreak\wh{\eta}_{n,T})$ is stabilized with a decaying radius $\beta_k r \max(1, \rmax)$. The bounded $(4+p)${th} moment condition also holds for ${h}^*_n$ since 
\begin{align*}
    |{h}^*_n(\wh{x}, \wh{\eta}_{n,T})|\leq |h_n(\wh{x}, \wh{\eta}_{n,T})|.
\end{align*}
Showing the last condition in the statement of \refthm{cltthm} on the variance of $\bar F_n(t)$, follows the same steps in the proof of 
\reflem{finite}, using \eqref{varF} and the limit covariance given in \reflem{covgen2}. 

\end{proof}

\subsection{Tightness}

\begin{lemma}\label{tightF}
    If $nr^d\to\gamma\in(0,\infty)$, the sequence $\{\bar{F}_{n}\}_{n\ge 1}$ is tight.
\end{lemma}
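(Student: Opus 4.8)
The plan is to verify conditions (C1) and (C2) of \refthm{mainthm2}, adapting the scheme of \reflem{tight} to accommodate the vacancy indicators carried by $F_n$. For (C1) I take $\Upsilon = 2$. Since $\bar F_n$ has unit variance at each fixed time,
\[
\ex\big[(\bar F_n(\delta) - \bar F_n(0))^2\big] = 2 - 2\cov[\bar F_n(\delta), \bar F_n(0)],
\]
and taking $\limsup_{n\to\infty}$ and applying \reflem{covgen2} leaves $2 - 2\sum_{l=1}^\infty \Lambda_l e^{-l\delta}$. Because $\Lambda_l \ge 0$ and $\sum_l \Lambda_l = 1$, monotone convergence shows $\sum_l \Lambda_l e^{-l\delta} \to 1$ as $\delta \to 0$, so the double limit vanishes and (C1) holds.

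For (C2) I again take $\Upsilon_1 = \Upsilon_2 = 2$. By stationarity the quantity to bound depends only on $h$, and dividing by $(\var[F_n(0)])^2$ together with the lower bound $\var[F_n(0)] \ge cn$ from \eqref{varF} reduces the problem to showing
\[
\ex\big[(F_n(2h) - F_n(h))^2 (F_n(h) - F_n(0))^2\big] \le C h^2 n^2
\]
for some $C > 0$. Using the marked representation \eqref{Fnt2}, each increment $F_n(t+h) - F_n(t)$ is a sum over subsets $\wh\mcS$ of $\xi_{k,r}(\wh\mcS)$ times the increment of the component indicator $\psi_t(\wh\mcS) := \tau_t(\wh\mcS)\,\tau^*_t\big((\wh\eta_{n,T}\setminus\wh\mcS)\cap N_r(\wh\mcS)\big)$. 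Expanding the fourfold product into a sum over quadruplets $(\mcS_1,\mcS_2,\mcS_3,\mcS_4)$ and grouping them by their intersection pattern $\cI = \cI_4$ (\refdef{defn:pattern}), I apply the generalized Mecke formula \reflem{Palmhigh} to each pattern, writing the per-pattern contribution as $E_{\cI}$.

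The core of the estimate is the mark-expectation. As $\xi_{k,r}$ depends only on positions while the indicators $\tau, \tau^*$ depend only on the birth-death marks, conditioning on positions factors $\ex[\cdot]$ into a spatial part and a mark part. The mark part decomposes further into void-probability factors from the Poisson neighborhood points, which are bounded by $1$ and so contribute no powers of $n$, and a time-increment function $\Psi_{\cI}(h)$ assembled from the marks of the tuple points via the probabilities $p_0, p_\Delta, p_{0,\Delta}$ of \eqref{pdelta}. A change of $\psi$ over a time interval requires a triggering birth or death in that interval, of either a set point or a neighborhood point, so $\Psi_{\cI}(0) = 0$; and because two of the four increments are supported on $[0,h]$ and two on the disjoint interval $[h,2h]$, a single event cannot serve both, which forces $\Psi_{\cI}'(0) = 0$. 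Hence $\Psi_{\cI}$ is twice continuously differentiable with $\Psi_{\cI}(0) = \Psi_{\cI}'(0) = 0$, and Taylor's theorem gives $|\Psi_{\cI}(h)| \le C_{\cI} h^2$, exactly as for \eqref{PhicI2}. Combining this with the spatial scaling $\ex[\prod_i \xi_{k,r}(\mcT_i)] \le C'_{\cI} r^{d(|\cI| - \#_{\cI})}$ supplied by \reflem{lemgeom}, available because \refass{loc2} confines each neighborhood to within $O(r)$ of its set, yields $|E_{\cI}| \le C''_{\cI} h^2 n^{\#_{\cI}}(nr^d)^{|\cI| - \#_{\cI}}$. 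Patterns with an isolated $\mcT_i$ contribute zero, so $\#_{\cI} \in \{1,2\}$, and since $nr^d \to \gamma$ and there are finitely many patterns, summing gives the required $O(h^2 n^2)$ bound.

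The principal obstacle is establishing the factorization and the $O(h^2)$ bound on $\Psi_{\cI}$. In contrast to \reflem{tight}, the vacancy indicator $\tau^*_t$ couples the set points to the neighborhood points, so an increment of $\psi$ may be triggered either by a set point or by a neighborhood point changing status, and the accompanying void probabilities are themselves $h$-dependent through $p_\Delta$ and $p_{0,\Delta}$. Verifying that the resulting composite mark function is twice continuously differentiable with vanishing value and first derivative at $h = 0$, while cleanly separating the bounded void factors from the genuinely $O(h^2)$ increment factors, is the delicate computation; as in the proof of \reflem{tight}, I would carry out this mark bookkeeping in an appendix modeled on the one establishing \eqref{PhicI2}.
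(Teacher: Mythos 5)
Your overall architecture (marked representation, decomposition over intersection patterns, Mecke's formula, an appendix-style $O(h^2)$ bound on the mark expectation, then power counting via $n^{\#_{\cI}}(nr^d)^{|\cI|-\#_{\cI}}$) matches the paper, but there is a genuine gap at the step where you discard patterns: the claim that ``patterns with an isolated $\mcT_i$ contribute zero, so $\#_{\cI}\in\{1,2\}$'' is false for $F_n$. The increments of $\psi_t(\wh{\mcS})=\tau_t(\wh{\mcS})\,\tau^*_t\big((\wh\eta_{n,T}\setminus\wh{\mcS})\cap N_r(\wh{\mcS})\big)$ are coupled through the vacancy indicators, so the conditional expectation of the fourfold product vanishes only when some \emph{neighborhood} $N_r(\mcS_i)$ is disjoint from all the other $N_r(\mcS_\ell)$ --- a condition on positions that the point-intersection pattern $\cI$ of \refdef{defn:pattern} does not record. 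A quadruplet of pairwise point-disjoint sets (so $\#_{\cI}=4$) whose neighborhoods all overlap contributes a nonzero term, and for such a pattern your bound reads $|E_{\cI}|\le C h^2 n^{4}(nr^d)^{|\cI|-4}=O(h^2n^4)$, which after dividing by $\var^2[F_n(0)]\sim n^2$ fails condition (C2). The missing idea is the paper's replacement of $\xi_{k,r}(\mcS_i)$ by $\wt\xi_{k,r}(\mcS_i)=\xi_{k,r}(\mcS_i)\ind\{\exists\,\ell\neq i: N_r(\mcS_i)\cap N_r(\mcS_\ell)\neq\varnothing\}$, bounded by a sum over fixed-point-free maps $\pi$ of products $\prod_i\xi_{k,r}(\mcS_i)\zeta_r(\mcS_i,\mcS_{\pi(i)})$. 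The overlap indicators $\zeta_r$ are translation/scale invariant and local (by Assumptions \ref{linF} and \ref{loc2}), so \reflem{lemgeom} applies to this \emph{augmented} functional, and the relevant component count $\#_{\cI_\pi}$ --- computed in the intersection graph where $\mcT_i\cup\mcT_{\pi(i)}$ also appear as nodes --- satisfies $\#_{\cI_\pi}\le 2$ for \emph{every} $\cI$. It is this extra $r^d$-gain from the overlap constraints inside the spatial expectation, not a restriction on $\#_{\cI}$, that produces the $O(h^2n^2)$ bound.

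A secondary inaccuracy lies in your claimed factorization of the mark expectation into ``void-probability factors bounded by $1$'' times a pure tuple-mark increment function $\Psi_{\cI}(h)$ that is $O(h^2)$. Conditionally on positions, the expectation is a signed sum of products $E^j_\ell(h)D^j_\ell(r,h)$ in which the void factors $D^j_\ell$ are themselves $h$-dependent, differ from term to term, and depend on the positions through $\bbQ[N_r(\cdot)]$ (the paper notes explicitly that the expectation cannot be separated as in the $f_n$ case). The $O(h^2)$ behavior comes from cancellations \emph{across} the whole signed sum: \reflem{lem:PhiF} shows that both the value and the first derivative of the full sum vanish at $h=0$ --- the latter requiring a symmetry argument for the $D$-factors in addition to $\Phi_{\cI}'(0)=0$ --- with second derivative bounded uniformly in $(r,\bar\mcT,n)$. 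Bounding the $D$'s by $1$ termwise destroys exactly this cancellation, so the clean separation you propose would not deliver the $h^2$; the bookkeeping must be done jointly, as in the paper's appendix.
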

\begin{proof}
Note that the steps in the proof of  \reflem{tight}, leading to  \eqref{tig}, can be repeated mutatis mutandis for $F_n(t)$. Therefore, condition (C1) of tightness holds for $\Upsilon=2$, and we can write
\begin{align}
     \ex\left[(F_{n}(2h)-F_{n}(h))^{2} (F_{n}(h)-F_{n}(0))^{2}\right] =\ex\left[\sum_{\bar{\mcS} \in \left(\fin({\wh{\eta}_{n,T})}\right)^4} g_{h}(\bar{\mcS}, \wh{\eta}_{n,T})\right] ,\label{F2h}
\end{align}
where
\[
    g_{h}(\bar{\mcS}, \wh{\eta}_{n,T}) \coloneqq
    \tau_{0,h}(\mcS_1,\wh{\eta}_{n,T})\tau_{0,h}(\mcS_2,\wh{\eta}_{n,T})\tau_{h,2h}(\mcS_3,\wh{\eta}_{n,T})\tau_{h,2h}(\mcS_4,\wh{\eta}_{n,T}) \times\prod_{i=1}^4 \xi_{k,r}(\mcS_i),
\]
and 
\[
    \tau_{t,t+h}(\mcS,\wh{\eta}_{n,T})\coloneqq \tau_{t+h}(\mcS)\tau^*_{t+h}\big( (\wh{\eta}_{n,T}\setminus {\mcS})\cap N_r(\mcS)\big) -  \tau_{t}(\mcS)\tau^*_t\big( (\wh{\eta}_{n,T}\setminus {\mcS})\cap N_r(\mcS)\big).
\]
Due to spatial independence, we have that 
$\ex\left[g_{h}(\bar{\mcS}, \wh{\eta}_{n,T})\big|\bar{\mcS}\right] =0$ if there is $i\in\{1,\ldots,4\}$ such that $N_r(\mcS_i)\cap N_r(\mcS_\ell) = \varnothing$ for all $\ell \ne i$. 
Therefore,
\[
    g_{h}(\bar{\mcS}, \wh{\eta}_{n,T}) =
    \tau_{0,h}(\mcS_1,\wh{\eta}_{n,T})\tau_{0,h}(\mcS_2,\wh{\eta}_{n,T})\tau_{h,2h}(\mcS_3,\wh{\eta}_{n,T})\tau_{h,2h}(\mcS_4,\wh{\eta}_{n,T}) \prod_{i=1}^4 \wt\xi_{k,r}(\mcS_i),
\]
where
\begin{align*}
    \wt\xi_{k,r}(\mcS_i) &\coloneqq \xi_{k,r}(\mcS_i) \ind\big\{\exists\, \ell\neq i: N_r(\mcS_i)\cap N_r(\mcS_\ell)\neq \varnothing \big\}\\
    &\leq \xi_{k,r}(\mcS_i) \sum_{\ell\neq i} \ind\{N_r(\mcS_i)\cap N_r(\mcS_\ell)\neq \varnothing \}.
\end{align*}
Observe that 
\begin{align}
    \prod_{i=1}^4 \wt\xi_{k,r}(\mcS_i)\leq \sum_{\pi}\prod_{i=1}^4 \xi_{k,r}(\mcS_i)\zeta_r(\mcS_i, \mcS_{\pi(i)}), \label{whxi}
\end{align}
where the sum is over all mappings $\pi$ from $\{1,2,3,4\}$ to $\{1,2,3,4\}$ with no fixed points, and 
\[
    \zeta_r(\mcS_i, \mcS_{\pi(i)})\coloneqq \ind\hspace{-0.4pt}\left\{N_r(\mcS_i)\cap N_r(\mcS_{\pi(i)})\neq \varnothing \right\} \ind\{\diam(\mcS_i)\leq \rmax r\}
    \ind\{\diam(\mcS_{\pi(i)})\leq \rmax r\}, 
\]
where the conditions in the last two indicators are equivalent to the locality condition (\refass{localass}) on $\xi_{k,r}$. Due to \refass{loc2} on $N_r$, 
\begin{align*}
    \zeta_r(\mcS_i, \mcS_{\pi(i)}) > 0 \quad\textrm{only if}\quad \diam(\mcS_i\cup \mcS_{\pi(i)})\leq 2\beta_k r  \max(\rmax, 1),
\end{align*}
and so $\zeta_r$ is local. 
Going back to the fourth moment calculation, proceeding as in the proof of \reflem{tight},  and using \reflem{Palmhigh}, we find that the right hand side of \eqref{F2h} is equal to
\begin{align*}
    \sum_\cI \frac{(n(1+ T))^{|\cI|}}{\prod_{J}I_{J}!} \ex\big[g_{h}(\bar{\mcT},\wh\mcP_{n(1+ T)})\big], 
\end{align*}
where $\wh\mcP_{n(1+ T)}$ is a marked Poisson point process independent of the iid points in $\bar{\mcT}$ which adhere to the intersection pattern $\cI$. By \reflem{lem:PhiF}, and \eqref{whxi}
\begin{align*}
\left|\ex[g_{h}(\bar{\mcT},\wh\mcP_{n(1+ T)})]\right| \leq \frac{K_{\cI} h^2}{(1+ T)^{|\cI|}}\sum_\pi   \ex\Big[\prod_{i=1}^4 \xi_{k,r}(\mcT_i) \zeta_r(\mcT_i, \mcT_{\pi(i)}) \Big] .
\end{align*}
Translation and scale-invariance of $\zeta_r$ also follows due to the affine invariance (\refass{linF}) of $N_r$. Therefore, we can apply \reflem{lemgeom} to the right hand side, and have
\begin{align*}
    \ex\Big[\prod_{i=1}^4 \xi_{k,r}(\mcT_i) \zeta_r(\mcT_i, \mcT_{\pi(i)}) \Big] r^{-d(|\cI| - \#_{\cI_\pi})}<\infty,
\end{align*}
where $\#_{\cI_\pi}$ denotes the number of components in the intersection structure that \[\prod_{i=1}^4 \xi_{k,r}(\mcT_i) \zeta_r(\mcT_i, \mcT_{\pi(i)})\] constitutes, in which we treat $\mcT_i \cup \mcT_{\pi(i)}$ as a separate node, in addition to $\mcT_i$ and  $ \mcT_{\pi(i)}$, in the intersection graph of the sets.
Therefore $\#_{\cI_\pi}\leq 2$ for all $\cI$ and $\pi$, and
\begin{align*}
     \ex\left[(F_{n}(2h)-F_{n}(h))^{2} (F_{n}(h)-F_{n}(0))^{2}\right] &\leq h^2 \sum_{\cI} \sum_\pi K_{\cI,\pi}n^{\#_{\cI_\pi}}\\
     &\leq K h^2n^2
\end{align*}
for some set of constants $K_{\cI,\pi}$, and $K>0$.
Condition (C2) of tightness for $\bar F_n$ follows from
\begin{align*}
     \ex\left[(\bar F_{n}(2h)-\bar F_{n}(h))^{2}  (\bar F_{n}(h)-\bar F_{n}(0))^{2}\right]=
     \frac{\ex\big[(F_{n}(2h)-F_{n}(h))^{2} (F_{n}(h)-F_{n}(0))^{2}\big]}{\var^2[F_n(0)]} 
\end{align*}
and \eqref{varF}.
\end{proof}

\subsection{Continuity of the Limit Process}

\begin{lemma}\label{contF}
    If $nr^d\to\gamma\in(0,\infty)$, then the limit $\lim\limits_{n\to\infty}\bar{F}_{n}(t)$ is almost surely continuous on $[0,\infty)$.
\end{lemma}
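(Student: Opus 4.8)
The limiting process is $\mc{U}_{\bar\bc}$, a centered, stationary Gaussian process whose covariance, by \reflem{covgen2}, is $K(s,t) = \sum_{l\ge 1}\Lambda_l e^{-l|s-t|}$ with $\sum_{l\ge1}\Lambda_l = 1$, so that $\var[\mc{U}_{\bar\bc}(t)] = 1$ for every $t$. The plan is to establish almost sure continuity through the Kolmogorov continuity criterion, applied directly to this covariance. Since the component processes $\mc{U}_l$ are independent with unit variance and $\|\bar\bc\| = 1$, the defining series $\sum_{l\ge1}\bar c_l \mc{U}_l(t)$ converges in $L^2$ for each $t$, so the Gaussian process $\mc{U}_{\bar\bc}$ is well defined; equivalently one may work with the partial sums $Z_N(t) := \sum_{l=1}^N \bar c_l\mc{U}_l(t)$, each of which has continuous paths, being a finite superposition of (continuous) OU processes.

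First I would compute the variance of the increments. Because $\mc{U}_{\bar\bc}$ is stationary with unit variance,
\[
\ex\big[(\mc{U}_{\bar\bc}(t) - \mc{U}_{\bar\bc}(s))^2\big] = 2\big(1 - K(s,t)\big) = 2\sum_{l\ge1}\Lambda_l\big(1 - e^{-l|s-t|}\big).
\]
Using the elementary bound $1 - e^{-x}\le x$ for $x\ge0$, this is at most $2|s-t|\sum_{l\ge1} l\Lambda_l$. Granting for the moment that $\Sigma := \sum_{l\ge1} l\Lambda_l < \infty$, the increment is centered Gaussian, so its fourth moment equals three times the square of its variance, giving
\[
\ex\big[|\mc{U}_{\bar\bc}(t) - \mc{U}_{\bar\bc}(s)|^4\big] \le 3\big(2\Sigma|s-t|\big)^2 = 12\,\Sigma^2|s-t|^2.
\]
This is a bound of Kolmogorov type with exponent $4$ and power $1+\beta$ where $\beta = 1$; the Kolmogorov continuity criterion then yields a modification of $\mc{U}_{\bar\bc}$ with paths that are locally H\"older of every order below $1/4$, and in particular almost surely continuous on each $[0,T]$, hence on $[0,\infty)$. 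Alternatively, the same moment bound applied to the tails $\mc{U}_{\bar\bc} - Z_N$ shows that $Z_N$ converges uniformly on compacts almost surely, so that the limit inherits continuity directly from the continuous partial sums.

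The one substantive point — and the step I expect to be the main obstacle — is the summability $\sum_{l\ge1} l\Lambda_l < \infty$, equivalently the rapid decay of $\Lambda_l$ in $l$; this does not follow from $\sum_l\Lambda_l = 1$ alone. I would extract it from the explicit form of $\tilde\Lambda_l$ (recall $\Lambda_l = \tilde\Lambda_l/\sum_i\tilde\Lambda_i$). The crucial feature is the factor $1/(l-j)!$ appearing in $\tilde\Lambda_l$, together with the $\gamma^l$ and the constants $\kappa_{k,j,l}$ from \eqref{kappakjl}: since $j$ ranges only over the finite set $\{0,\ldots,k\}$ and the $\kappa_{k,j,l}$ grow at most geometrically in $l$ (being integrals of products of bounded $\xi_{k,1}$ against an $l$-fold void-volume term, whose absolute value is bounded by a constant to the power $l$), the factorial in the denominator forces super-exponential decay of $\tilde\Lambda_l$. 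This makes $\sum_l l^m\Lambda_l$ finite for every $m$, which is far more than the $m=1$ estimate used above requires. I would therefore isolate this decay estimate as a short computation building on \refapp{alphaproof}, after which the continuity assertion follows from the Kolmogorov criterion exactly as outlined.
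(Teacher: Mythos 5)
Your proof is correct, but it follows a genuinely different route from the paper's. The paper never analyzes the coefficients $\Lambda_l$ at all: it invokes Fernique's continuity criterion for stationary Gaussian processes, feeding it the increment bound $\ex[(\bar F_{n}(h)-\bar F_{n}(0))^{2}]\le Kh$, which is proved \emph{uniformly in $n$ at the prelimit level} via the marked-point-process machinery (Mecke's formula together with the Taylor-type estimate of \reflem{lem:F2mom}), and is then transferred to the limit by Fatou's lemma and weak convergence; the modulus $\psi(h)=\sqrt{Kh}$ then passes Fernique's integral test. You instead work entirely on the limit side: taking the covariance $\sum_{l}\Lambda_l e^{-l|t-s|}$ of \reflem{covgen2} as given, you reduce continuity to the summability $\Sigma=\sum_l l\Lambda_l<\infty$, and you correctly identify that this is the one non-trivial point and prove it from the explicit constants --- your geometric bound on $\kappa_{k,j,l}$ is right, since on the support of $\xi_{k,1}$ Assumptions \ref{localass} and \ref{loc2} give $\vol\left[N_1(0,\mb{y})\cap N_1(\mb{y}')\right]\le V_d\left[\beta_k\max(1,\rmax)\right]^d$ and $[q(x)]^{l}\le\|q\|_{\infty}^{l}$, so the factorial $1/(l-j)!$ with $j\le k$ forces super-exponential decay of $\tilde\Lambda_l$; Gaussian fourth moments and the Kolmogorov--Chentsov criterion then finish the argument. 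The two routes are essentially equivalent in content (your $\Sigma<\infty$ is exactly what the paper's uniform bound yields in the limit: divide $2\sum_l\Lambda_l(1-e^{-lh})\le Kh$ by $h$ and let $h\to 0$), but yours is self-contained given \reflem{covgen2} and the appendix constants, avoiding \reflem{lem:F2mom} and Fatou, whereas the paper's avoids any study of the decay of $\Lambda_l$ and recycles estimates of the type already needed for tightness. Two minor caveats: Kolmogorov's criterion produces a continuous \emph{modification}, so to get the stated a.s.\ continuity of the (c\`adl\`ag) weak limit you should add the one-line indistinguishability argument via right-continuity on a countable dense set; and your side remark that the same moment bound applied to the tails yields a.s.\ uniform convergence of $Z_N$ on compacts needs an additional ingredient (It\^o--Nisio or a maximal/chaining inequality), since pointwise moment bounds alone do not control suprema --- but neither caveat affects your main argument.
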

\begin{proof}
We use Fernique's continuity criterion \cite{fernique1964continuite} to prove this, which states that a stationary Gaussian process $X(t)$ is almost surely path continuous if there exists a non-decreasing function $\psi(h)$ such that 
\begin{align}
    \ex\left[(X(h)-X(0))^2\right]\leq \psi^2(h)
\quad \text{and}\quad
    \int_{1}^\infty \psi(e^{-x^2})\der x<\infty. \label{fern}
\end{align}
The reader is referred to \cite{marcus1970continuity} or \cite{garsia1970real} for different proofs of this criterion. 
By Fatou's Lemma and the weak convergence of $\bar F_n$ to the limit process $X$, we have 
\begin{align}
    \ex\left[(X(h)-X(0))^2\right]\leq 
    \lim_{n\to\infty}  \ex\left[ (\bar F_{n}(h)-\bar F_{n}(0))^{2}\right].  
    \label{Fatou}
\end{align}
To compute this limit, 
as in \eqref{F2h}, we can write 
\begin{align*}
     \ex\left[ (F_{n}(h)-F_{n}(0))^{2}\right] =\ex\sum_{\bar{\mcS} \in \left(\fin({\wh{\eta}_{n,T})}\right)^2} \overline g_h(\bar{\mcS}, \wh{\eta}_{n,T}) ,
\end{align*}
where
\begin{align}
    \overline g_{h}(\bar{\mcS}, \wh{\eta}_{n,T}) \coloneqq
    \tau_{0,h}(\mcS_1,\wh{\eta}_{n,T})\tau_{0,h}(\mcS_2,\wh{\eta}_{n,T}) \xi_{k,r}(\mcS_1) \xi_{k,r}(\mcS_2). \label{tilgdef}
\end{align}
 
Following the same arguments as in the proof of \reflem{tightF}, and using \reflem{lem:F2mom} instead of \reflem{lem:PhiF} we obtain
\begin{align*}
     \ex\left[ (F_{n}(h)-F_{n}(0))^{2}\right] 
     \leq K' h n^{|\cI|} r^{d(|\cI| - \#_{\cI})},
\end{align*}
for some constant $K'>0$. However, in this case, $\#_{\cI}=1$, and so, applying \eqref{varF}, we obtain the  bound, uniform in $n$,
\begin{align*}
     \ex\left[ (\bar F_{n}(h)-\bar F_{n}(0))^{2}\right] 
     \leq K h
\end{align*}
for some $K>0$. Thus, choosing $\psi(h) = \sqrt{Kh} $, and recalling \eqref{Fatou}, we have 
\begin{align*}
    \int_{1}^\infty \psi(e^{-x^2})\der x= \int_{1}^\infty\sqrt{K} e^{-x^2/2}\der x\leq \sqrt{\frac{K\pi}{2}},
\end{align*}
giving  that \eqref{fern} is satisfied.
\end{proof}

\begin{proof}[Proof of \refthm{OU2}] 
This follows immediately as a consequence of Lemmas \ref{covgen2}, \ref{finiteF}, \ref{tightF}, \ref{contF} and \refthm{mainthm}.
\end{proof}

\begin{acks}[Acknowledgments]
The authors are grateful to Yogeshwaran Dhandapani for useful discussions and advice, and in particular for pointing us in the direction of highly related literature.
\end{acks}

\begin{funding} \ 
\\
EO was supported in part by the Israel Science Foundation, Grants 2539/17 and 1965/19.\\
OB was supported in part by the Israel Science Foundation, Grant
1965/19.\\
 RJA was supported in part by the Israel Science Foundation, Grant  2539/17.
\end{funding}

\begin{appendix}
\section{Auxiliary Lemmas}\label{exph}
\sec 

Here we state and prove auxiliary lemmas, used in the proofs of main theorems, regarding the expectations of some functions defined in the main text. Recall the definition of $g_h$ in \eqref{eqn:def_g}. We start by proving the following lemma.

\begin{lemma}\label{lem:Phi}
Let $\cI$ be an intersection pattern, and let $\wh{\mb{X}}$ be a $|\cI|$-tuple of iid marked points in $\Rd\times \mathbb{R}^2$. Writing $\bar\mcT =(\mcT_1,\mcT_2,\mcT_3,\mcT_4)= \Pi_\cI(\wh{\mb{X}})$, we have
\[
\ex[g_h(\bar{\mcT})] = \frac{1}{(1+ T)^{|\cI|}}\Phi_\cI(h)\ex\Big[\prod_{i=1}^4 \xi_{k,r}(\mcT_i)\Big], 
\]
where $\Phi_\cI(h)$ is defined in \eqref{eqn:def_Phi}.
\end{lemma}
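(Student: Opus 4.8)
The plan is to separate the positional and temporal (mark) contributions. Recall that $\xi_{k,r}(\wh\mcS)$ ignores the marks, so $\prod_{i=1}^4\xi_{k,r}(\mcT_i)$ is a function of the spatial coordinates of $\wh{\mb X}$ alone, whereas each factor $\tau_{0,h}(\mcT_i)$ and $\tau_{h,2h}(\mcT_i)$ depends only on the marks $(B_x,L_x)$ of the points involved. Since $\wh{\mb X}$ consists of iid marked points whose positions are independent of their marks, the expectation of the product factorizes as
\[
\ex[g_h(\bar\mcT)] = \ex\Big[\prod_{i=1}^4\xi_{k,r}(\mcT_i)\Big]\cdot \ex\big[\tau_{0,h}(\mcT_1)\tau_{0,h}(\mcT_2)\tau_{h,2h}(\mcT_3)\tau_{h,2h}(\mcT_4)\big].
\]
It therefore remains to show that the second, purely temporal, factor equals $(1+ T)^{-|\cI|}\Phi_\cI(h)$.

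To evaluate this temporal factor, I would expand each difference $\tau_{a,b}(\mcT_i)=\tau_b(\mcT_i)-\tau_a(\mcT_i)$, using $\tau_t(\mcT_i)=\prod_{\wh x\in\mcT_i}\tau_t(\wh x)$, into the $2^4$ signed terms $\sum_{\bs\epsilon}(-1)^{|\{i:\epsilon_i=a_i\}|}\prod_{i=1}^4\tau_{\epsilon_i}(\mcT_i)$, where $\epsilon_i$ ranges over the two endpoints $\{0,h\}$ for $i\in\{1,2\}$ and $\{h,2h\}$ for $i\in\{3,4\}$. For a fixed choice $\bs\epsilon$, the product $\prod_i\tau_{\epsilon_i}(\mcT_i)=\prod_{m=1}^{|\cI|}\prod_{i:\,\wh X_m\in\mcT_i}\tau_{\epsilon_i}(\wh X_m)$ factorizes across the $|\cI|$ distinct points, the membership $\wh X_m\in\mcT_i$ being exactly prescribed by the intersection pattern $\cI$. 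By independence of the marks, the expectation then becomes a sum over $\bs\epsilon$ of products over the $|\cI|$ distinct points of per-point probabilities of the form $\pr[\bigcap_i\{B\le\epsilon_i<B+L\}]$, each evaluated against the law of $(B,L)$ described in \eqref{eqn:def_B_L}. This sum of products is precisely the quantity $\Phi_\cI(h)$ of \eqref{eqn:def_Phi}, up to the factor $(1+ T)^{-|\cI|}$: since each nested event forces $B\le\min_i\epsilon_i$, every per-point probability carries a factor $\pr[B=0]=1/(1+ T)$ or its $O(1/(1+ T))$ analogue, exactly as in the computation of $p_0,p_\Delta,p_{0,\Delta}$ in \eqref{pdelta}.

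Continuity of $\Phi_\cI$ in $h$ is then immediate, since each per-point probability is a finite combination of terms linear in $h$ (coming from the uniform birth law) and of exponentials $e^{-(\cdot)}$ (coming from the exponential lifetime), hence smooth in $h$. The main obstacle is purely combinatorial bookkeeping: one must organize the expansion so that the signs and the endpoint choices $\bs\epsilon$ are correctly matched against the membership classes $\{i:\wh X_m\in\mcT_i\}$ induced by $\cI$, and then verify that the resulting sum of products agrees with the defining expression \eqref{eqn:def_Phi}. No analytic difficulty is expected beyond this matching, as all the expectations involved reduce to elementary integrals against the explicit mark distribution.
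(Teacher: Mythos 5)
Your proposal is correct and takes essentially the same route as the paper: factor the expectation into the spatial part $\ex\big[\prod_{i=1}^4 \xi_{k,r}(\mcT_i)\big]$ times the purely temporal part using independence of positions and marks, expand the product of the four differences into $2^4$ signed terms, factorize each term over the $|\cI|$ distinct points according to the membership classes of the intersection pattern, and evaluate the per-point mark probabilities (each of the form $1/(1+T)$ or $e^{-(\cdot)h}/(1+T)$), which yields exactly the defining expression for $(1+T)^{-|\cI|}\Phi_\cI(h)$. The only (inconsequential) difference is that the paper carries out the bookkeeping explicitly, listing all sixteen terms and their exponents in the displayed formulas for $E_1$ and $E_2$, whereas you describe the same computation in compressed form.
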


\begin{proof}
Using  the definition of $g_h$ at \eqref{eqn:def_g}, we have
\begin{equation}
\begin{split}
\ex\big[ g_h(\bar{\mcT}) \big] &= \ex\Big[\prod_{i=1}^4 \xi_{k,r}(\mcT_i)\Big]\ex\big[\tau_{0,h}(\mcT_1)\tau_{0,h}(\mcT_2)\tau_{h,2h}(\mcT_3)\tau_{h,2h}(\mcT_4)\big] \\
 &= \ex\Big[\prod_{i=1}^4 \xi_{k,r}(\mcT_i)\Big](E_1-E_2),
 \end{split}\label{Egkr}
\end{equation}
where
\begin{align}
\begin{split}
    E_1 = \ex \big[ 
 \tau_h(\mcT_1)\tau_h(\mcT_2)\tau_{2h}(\mcT_3)\tau_{2h}(\mcT_4)  &+\tau_0(\mcT_1)\tau_0(\mcT_2)\tau_h(\mcT_3)\tau_h(\mcT_4)\\
  + \tau_0(\mcT_1)\tau_0(\mcT_2)\tau_{2h}(\mcT_3)\tau_{2h}(\mcT_4) &+ \tau_h(\mcT_1)\tau_h(\mcT_2)\tau_h(\mcT_3)\tau_h(\mcT_4)\\
 + \tau_0(\mcT_1)\tau_h(\mcT_2)\tau_h(\mcT_3)\tau_{2h}(\mcT_4)&+ \tau_0(\mcT_1)\tau_h(\mcT_2)\tau_{2h}(\mcT_3)\tau_h(\mcT_4)\\
 + \tau_h(\mcT_1)\tau_0(\mcT_2)\tau_h(\mcT_3)\tau_{2h}(\mcT_4)
 &+ \tau_h(\mcT_1)\tau_0(\mcT_2)\tau_{2h}(\mcT_3)\tau_h(\mcT_4)
     \big],        
\end{split}\label{E1}
\end{align}
\begin{align}\label{E2}
\begin{split}
    E_2 = \ex \big[ 
 \tau_0(\mcT_1)\tau_h(\mcT_2)\tau_{2h}(\mcT_3)\tau_{2h}(\mcT_4)  &+\tau_h(\mcT_1)\tau_0(\mcT_2)\tau_{2h}(\mcT_3)\tau_{2h}(\mcT_4)\\
  + \tau_0(\mcT_1)\tau_h(\mcT_2)\tau_h(\mcT_3)\tau_h(\mcT_4) &+ \tau_h(\mcT_1)\tau_0(\mcT_2)\tau_h(\mcT_3)\tau_h(\mcT_4)\\
 + \tau_0(\mcT_1)\tau_0(\mcT_2)\tau_h(\mcT_3)\tau_{2h}(\mcT_4)&+ \tau_0(\mcT_1)\tau_0(\mcT_2)\tau_{2h}(\mcT_3)\tau_h(\mcT_4)\\
 + \tau_h(\mcT_1)\tau_h(\mcT_2)\tau_h(\mcT_3)\tau_{2h}(\mcT_4)
 &+ \tau_h(\mcT_1)\tau_h(\mcT_2)\tau_{2h}(\mcT_3)\tau_h(\mcT_4)
     \big].
\end{split}
\end{align}
We will explicitly evaluate the first term in $E_1$. The others can be computed in a similar fashion. To start,
 for a set of indexes $J$, write
\begin{align}\label{TJ}
    \mcT_{J} = \left(\bigcap_{j\in J} \mcT_{j}\right)\cap \left(\bigcap_{j\not\in J} (\mcT_{j})^c\right).
\end{align}
The event
$A = \{\tau_h(\mcT_1)\tau_h(\mcT_2)\tau_{2h}(\mcT_3)\tau_{2h}(\mcT_4) =1\}$ can be split as follows
\[
\begin{split}
    A &= \left(\bigcap_{x\in \mcT_1\cup \mcT_2\cup \mcT_{1,2}} \{B_x \le h < B_x+L_x\}\right) \cap \left(\bigcap_{x\in \mcT_3\cup \mcT_4\cup \mcT_{3,4}} \{B_x \le 2h < B_x+L_x\}\right)\\
    &\quad \cap\left(\bigcap_{x \in \mcT_{1,3}\cup \mcT_{1,4}\cup \mcT_{2,3}\cup\mcT_{2,4}\cup\mcT_{1,2,3}\cup\mcT_{1,2,4}\cup\mcT_{1,3,4}\cup\mcT_{2,3,4}\cup\mcT_{1,2,3,4}} \{B_x \le h < 2h < B_x+L_x\}\right).
\end{split}
\]
Note that all events in the above intersections are independent (since the sets of indexes are disjoint), and in addition we can show (cf. \eqref{pdelta}) that 
\[
\begin{split}
    \pr[B_x \le h < B_x+L_x] &= \pr[B_x \le 2h < B_x+L_x] = \frac{1}{1+ T},\\
    \pr[B_x \le h < 2h < B_x+L_x] &= \frac{e^{- h}}{1+ T}.
\end{split}
\]
Therefore, we have
\[
    \pr[A] = \left(\frac{1}{1+ T}\right)^{|\cI|} e^{- h (I_{1,3} + I_{1,4} + I_{2,3} + I_{2,4} + I_{1,2,3} + I_{1,2,4} + I_{1,3,4} + I_{2,3,4}+ I_{1,2,3,4})}.
\]
Applying a similar analysis to all other terms gives
\begin{align}
\begin{split}
    E_1  = \left(\frac{1}{1+ T}\right)^{|\cI|}\Big(&2e^{- h (I_{1,3} + I_{1,4} + I_{2,3} + I_{2,4} + I_{1,2,3} + I_{1,2,4} + I_{1,3,4} + I_{2,3,4}+ I_{1,2,3,4})}\\
    &+ e^{-2 h (I_{1,3}+ I_{1,4} + I_{2,3} + I_{2,4}+ I_{1,2,3}+ I_{1,2,4}+ I_{1,3,4}+ I_{2,3,4} + I_{1,2,3,4} )}+1\\
    &+ e^{- h (I_{1,2}+ I_{1,3} + I_{2,4} + I_{3,4}+ I_{1,2,3}+ I_{2,3,4} )}e^{-2 h (I_{1,4}+ I_{1,2,4} + I_{1,3,4} + I_{1,2,3,4} )}\\
    &+ e^{- h (I_{1,2}+ I_{1,4} + I_{2,3} + I_{3,4}+ I_{1,2,4}+ I_{2,3,4} )}e^{-2 h (I_{1,3}+ I_{1,2,3} + I_{1,3,4} + I_{1,2,3,4} )}\\
    &+ e^{- h (I_{1,2}+ I_{1,4} + I_{2,3} + I_{3,4}+ I_{1,2,3}+ I_{1,3,4} )}e^{-2 h (I_{2,4}+ I_{1,2,4} + I_{2,3,4} + I_{1,2,3,4} )}\\
    &+ e^{- h (I_{1,2}+ I_{1,3} + I_{2,4} + I_{3,4}+ I_{1,2,4}+ I_{1,3,4} )}e^{-2 h (I_{2,3}+ I_{1,2,3} + I_{2,3,4} + I_{1,2,3,4} )}\Big),
\end{split}\label{E1=}
\end{align}
and
\begin{align}
\begin{split}
    E_2  = \left(\frac{1}{1+ T}\right)^{|\cI|}\Big(&e^{- h (I_{1,2} + I_{2,3} + I_{2,4} + I_{2,3,4})}e^{-2 h(I_{1,3} + I_{1,4} + I_{1,2,3} + I_{1,2,4}+ I_{1,3,4} + I_{1,2,3,4})}\\
    &+ e^{- h (I_{1,2}+ I_{1,3} + I_{1,4} + I_{1,3,4})}e^{-2 h (I_{2,3}+ I_{2,4} + I_{1,2,3} + I_{1,2,4} +I_{2,3,4} +I_{1,2,3,4} )}\\
    &+ e^{- h (I_{1,2}+ I_{1,3} +I_{1,4} + I_{1,2,3} + I_{1,2,4} + I_{1,3,4}+  I_{1,2,3,4} )}\\
    &+ e^{- h (I_{1,2}+ I_{2,3} +I_{2,4} + I_{1,2,3} + I_{1,2,4} + I_{2,3,4}+  I_{1,2,3,4} )}\\
    &+ e^{- h (I_{1,3}+ I_{2,3} + I_{3,4} + I_{1,2,3})}e^{-2 h (I_{1,4}+ I_{2,4} + I_{1,2,4} + I_{1,3,4} + I_{2,3,4}+ I_{1,2,3,4} )}\\
    &+ e^{- h (I_{1,4}+ I_{2,4} + I_{3,4}+ I_{1,2,4})}e^{-2 h (I_{1,3}+ I_{2,3}+I_{1,2,3} + I_{1,3,4} + I_{2,3,4} + I_{1,2,3,4} )}\\
    &+  e^{- h (I_{1,4}+ I_{2,4} +I_{3,4} + I_{1,2,4} + I_{1,3,4} + I_{2,3,4}+  I_{1,2,3,4} )}\\
    &+ e^{- h (I_{1,3}+ I_{2,3} +I_{3,4} + I_{1,2,3} + I_{1,3,4} + I_{2,3,4}+  I_{1,2,3,4} )}
    \Big).
\end{split}\label{E2=}
\end{align}
As a shorthand notation to the above, we will write
\begin{equation}\label{eqn:def_Phi}
E_1 - E_2 = \left(\frac{1}{1+ T}\right)^{|\cI|} \Phi_\cI(h),
\end{equation}
concluding the proof.
\end{proof}
\begin{remark}
Using the chain rule, and the symmetry in \eqref{E1=} and \eqref{E2=} (each $I$ term appears in the exponents equally many times in $E_1$ and $E_2$), we observe that
\begin{align}\label{PhicI2}
\Phi_{\cI}(0) = \Phi_{\cI}'(0) = 0, \quad\text{and}\quad \sup_{h\ge 0}|\Phi_{\cI}''(h)| < \infty.
\end{align}
\end{remark}

We now turn to  deriving an upper bound for the expectation of $g_h(\bar{\mcT},\wh\mcP_{n(1+ T)})$, as a function of $h$. 
\begin{lemma}\label{lem:PhiF}
Let $\cI$ and $\bar\mcT$ be as in \reflem{lem:Phi}, then
\[
\left|\ex\big[g_h(\bar{\mcT},\wh\mcP_{n(1+ T)})\big]\right| \leq \frac{K_\cI}{(1+ T)^{|\cI|}}\,   \ex\Big[\prod_{i=1}^4 \wt\xi_{k,r}(\mcT_i) \Big]\, h^2, 
\]
for some finite constant $K_\cI$.
\end{lemma}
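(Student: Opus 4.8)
The plan is to mirror the proof of \reflem{lem:Phi}, the new ingredient being the vacancy indicators $\tau^*$ that enter the $g_h$ of \reflem{tightF}. Write $\wh\mcP:=\wh\mcP_{n(1+T)}$ and, for a cluster $\mcT_i$, let $\chi_s(\mcT_i)$ denote the indicator that all $k$ points of $\mcT_i$ are alive at time $s$ while every other point of $\bar\mcT\cup\wh\mcP$ lying in $N_r(\mcT_i)$ is dead at time $s$, so that $\tau_{t,t+h}(\mcT_i,\cdot)=\chi_{t+h}(\mcT_i)-\chi_t(\mcT_i)$. First I would condition on the spatial positions $\bar\mcT$, freezing the factors $\prod_{i=1}^4\xi_{k,r}(\mcT_i)$ and the neighbourhoods, so that only the marks remain random:
\[
\ex\big[g_h(\bar\mcT,\wh\mcP)\mid\bar\mcT\big]=\prod_{i=1}^4\xi_{k,r}(\mcT_i)\;\ex\Big[\prod_{i=1}^2(\chi_h-\chi_0)(\mcT_i)\prod_{i=3}^4(\chi_{2h}-\chi_h)(\mcT_i)\ \Big|\ \bar\mcT\Big].
\]

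Next I would expand the product of the four differences into its signed terms, exactly as $E_1-E_2$ in the proof of \reflem{lem:Phi}; each term is a joint expectation $\ex[\prod_i\chi_{a_i}(\mcT_i)\mid\bar\mcT]$ with time labels $a_i\in\{0,h,2h\}$. Since the marks are independent across points, each such expectation factorizes into (i) an alive-probability for each of the $|\cI|$ points of $\bigcup_i\mcT_i$ (a point of $\mcT_J$ must be alive at all times $\{a_i:i\in J\}$, contributing a factor $\tfrac{1}{1+T}e^{-(\cdot)h}$, cf.\ \eqref{pdelta}) and (ii) a void factor for the Poisson points lying in the neighbourhoods, computed through the void probabilities exactly as in \reflem{covgen2}, of the form $\exp\{-nQ(h)\}$, where $Q(h)$ is a nonnegative combination of $\bbQ$-measures of intersections of the $N_r(\mcT_i)$ weighted by time-dependent coefficients. (Points of one cluster that fall inside another's neighbourhood simultaneously carry an alive- and a dead-constraint, but these remain single-point factors of order $\tfrac{1}{1+T}$, smooth in $h$.) Collecting the $|\cI|$ factors of type (i) produces the prefactor $(1+T)^{-|\cI|}$, so that
\[
\ex\big[g_h(\bar\mcT,\wh\mcP)\mid\bar\mcT\big]=\frac{1}{(1+T)^{|\cI|}}\prod_{i=1}^4\xi_{k,r}(\mcT_i)\,\Phi^F_\cI(h),
\]
with $\Phi^F_\cI$ a finite sum of products of the exponentials above, hence real-analytic in $h$.

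The $h^2$ bound then follows from a second-order Taylor expansion of $\Phi^F_\cI$. That $\Phi^F_\cI(0)=0$ is immediate, as every difference $\chi_{\cdot}-\chi_{\cdot}$ vanishes at $h=0$. For $(\Phi^F_\cI)'(0)=0$ I would argue that $\ex[\prod_i(\cdots)\mid\bar\mcT]=O(h^2)$ directly: the product is nonzero only if the isolation status of clusters $1,2$ changes in $[0,h]$ and that of clusters $3,4$ changes in the disjoint interval $[h,2h]$, which forces at least one birth/death event in each of two disjoint windows of length $h$; conditioning on the almost surely finite set of relevant points, whose marks are independent, the probability of this is $O(h^2)$. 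A smooth function that is $O(h^2)$ has vanishing value and first derivative at the origin. Finally, the uniform bound $\sup_{h\ge0}|(\Phi^F_\cI)''(h)|\le K_\cI$ uses the thermodynamic scaling: by \refass{loc2} each neighbourhood has $\bbQ$-measure $O(r^d)$, so $nQ(h),\,n|Q'(h)|,\,n|Q''(h)|=O(nr^d)=O(1)$, whence differentiating the finitely many bounded exponential factors keeps everything bounded uniformly in $n$ and $T$. Taylor's theorem with Lagrange remainder then yields $|\Phi^F_\cI(h)|\le K_\cI h^2$.

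It remains to upgrade $\prod_i\xi_{k,r}(\mcT_i)$ to $\prod_i\wt\xi_{k,r}(\mcT_i)$. If, for some $i$, $N_r(\mcT_i)$ is disjoint from every other $N_r(\mcT_\ell)$, then the factor $(\chi_{s'}-\chi_{s})(\mcT_i)$ depends only on marks of points in $\mcT_i\cup N_r(\mcT_i)$, which are independent of those governing the remaining three factors; since $\ex[\chi_s(\mcT_i)\mid\bar\mcT]$ is constant in $s$ by the time-stationarity of the marks (cf.\ \eqref{pdelta}), this factor has zero conditional mean and the whole conditional expectation vanishes. Hence the indicator $\ind\{\exists\,\ell\ne i:N_r(\mcT_i)\cap N_r(\mcT_\ell)\ne\varnothing\}$ may be inserted for each $i$, turning each $\xi_{k,r}(\mcT_i)$ into $\wt\xi_{k,r}(\mcT_i)$; taking expectations over $\bar\mcT$ gives the claim. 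The main obstacle is the bookkeeping in the factorization step — enumerating the signed terms and the void factors when the neighbourhoods overlap and when points of one cluster sit inside another's neighbourhood — where the thermodynamic smallness $n\bbQ[N_r]=O(nr^d)=O(1)$ is precisely what tames the $n$-dependence hidden in the void exponentials and secures the uniform second-derivative bound.
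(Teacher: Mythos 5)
Your proposal is correct and follows the same skeleton as the paper's proof: condition on the positions $\bar\mcT$, expand the product of the four $\tau$-differences into the sixteen signed terms of $E_1-E_2$, use independence of the marks to factor each term into per-point alive/dead probabilities (which supply the $(1+T)^{-|\cI|}$) times Poisson void factors, and get the $h^2$ bound from a second-order Taylor expansion; the insertion of the overlap indicators turning $\xi_{k,r}$ into $\wt\xi_{k,r}$ is the same zero-conditional-mean, spatial-independence observation the paper makes just before invoking the lemma. Two sub-steps are genuinely different. First, where the paper shows the first derivative vanishes at $h=0$ algebraically (the $E$-part is $\Phi_\cI'(0)=0$ from \reflem{lem:Phi}, and the $D$-part cancels by the symmetry between the exponents of $D_1^j$ and $D_2^j$), you argue probabilistically that the conditional expectation is $O(h^2)$ because a birth/death event is forced in each of the two disjoint windows $(0,h]$ and $(h,2h]$, and then read off the vanishing value and first derivative; this is a valid and arguably more transparent alternative (the same-point case, birth in the first window and death in the second, is still $O(h^2)$ by independence of $B_x$ and $L_x$), though note it cannot replace the factorization step, since by itself it does not produce the $(1+T)^{-|\cI|}$ prefactor. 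Second, for the uniform second-derivative bound the paper uses the self-taming inequalities $xe^{-x}\le e^{-1}$ and $x(x+1)e^{-x}\le C$ with $x = n\bbQ[N_r(\bar\mcT)]$, which is uniform with no assumption on $nr^d$, whereas you invoke $n\bbQ[N_r(\mcT_i)]=O(nr^d)=O(1)$. That works, with two caveats worth stating: \refass{loc2} alone does not give $\bbQ[N_r(\mcT_i)]=O(r^d)$ --- you also need \refass{localass}, i.e.\ the restriction to $\diam(\mcT_i)\le \rmax r$, which is legitimate because the bound is only needed on the support of $\prod_i\wt\xi_{k,r}(\mcT_i)$; and your constant $K_\cI$ is uniform only under the thermodynamic scaling $\sup_n nr^d<\infty$, while the paper's is unconditional (harmless here, since the lemma is applied only in that regime). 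A small point in your favour: your $\chi_s(\mcT_i)$ correctly retains the constraint that points of \emph{other} clusters falling in $N_r(\mcT_i)$ be dead, a cross term that the paper's $E^*_\ell$ notation suppresses.
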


\begin{proof}
Following the steps of the previous proof, and exploiting the structural symmetry in the definitions of $g_h(\bar{\mcT})$ and  $g_h(\bar{\mcT},\wh\mcP_{n(1+ T)})$,  observe that
\begin{align}
    \ex\big[g_h(\bar{\mcT},\wh\mcP_{n(1+ T)})\big] = \ex\Big[\prod_{i=1}^4 \wt\xi_{k,r}(\mcT_i)(E^*_1-E^*_2)\Big]\label{Egkr2},
\end{align}
where the definitions of $E^*_1$ and $E^*_2$ are the same as $E_1$ and $E_2$ in \eqref{E1} and \eqref{E2}, except that each $\tau_t(\mcT_j)$ term is to be replaced by 
\begin{align*}
    \tau_t( \mcT_j) \tau^*_t\big( \wh\mcP_{n(1+ T)}\cap N_r(\mcT_j)\big)
\end{align*}
for $t\in\{0,h,2h\}$ and $j\in\{1,2,3,4\}$. Note that the expectation cannot be separated as in \eqref{Egkr}, since the terms $E^*_1$ and $E^*_2$  depend on the location of the points and therefore on $\xi_{k,r}(\mcT_i)$. We start with  the calculation of the first term in $\ex[E^*_1]$,  where the expectation is over the birth and death times of the points in $\bar\mcT$ and the independent Poisson point process $\wh\mcP_{n(1+ T)}$. Note that the event 
\begin{align*}
A^*\coloneqq \big\{ \tau^*_h\big( \wh\mcP_{n(1+ T)}\cap N_r(\mcT_1)\big) &= \tau^*_h\big(\wh\mcP_{n(1+ T)}\cap N_r(\mcT_2)\big) \\& = \tau^*_{2h}\big( \wh\mcP_{n(1+ T)}\cap N_r(\mcT_3)\big)\\ &= \tau^*_{2h}\big(\wh\mcP_{n(1+ T)}\cap N_r(\mcT_4)\big) = 1 \big\}
\end{align*} 
can be split as follows:
\[
\begin{split}
    &A^* = \left(\hspace{-3em}\bigcap_{\hspace{3em}x\in \left(\mcT^r_1\cup \mcT^r_2\cup \mcT^r_{1,2}\right)\cap\wh\mcP_{n(1+ T)}}\hspace{-5em} \{B_x \le h < B_x+L_x\}^c\right) \cap \left(\hspace{-3em}\bigcap_{\hspace{3em}x\in \left(\mcT^r_3\cup \mcT^r_4\cup \mcT^r_{3,4}\right)\cap\wh\mcP_{n(1+ T)}}\hspace{-5em} \{B_x \le 2h < B_x+L_x\}^c\right)\\
    &\qquad\quad\cap\left(\hspace{-9em}\bigcap_{\hspace{9em}x \in\left( \mcT^r_{1,3}\cup \mcT^r_{1,4}\cup \mcT^r_{2,3}\cup\mcT^r_{2,4}\cup\mcT^r_{1,2,3}\cup\mcT^r_{1,2,4}\cup\mcT^r_{1,3,4}\cup\mcT^r_{2,3,4}\cup\mcT^r_{1,2,3,4}\right)\cap\wh\mcP_{n(1+ T)}}\hspace{-14.5em} \{B_x \le h < B_x+L_x\}^c \cap \{B_x \le 2h < B_x+L_x\}^c\right),
\end{split}
\]
where $^c$ denotes the complement  and $\mcT_{J}^r$ (similarly to $\mcT_J$ in \eqref{TJ}) is used as a short-hand notation for 
\[
    \mcT_{J}^r \coloneqq  \left(\bigcap_{j\in J} N_r\left(\mcT_{j}\right)\right)\cap \left(\bigcap_{j\not\in J} \big(N_r\left(\mcT_{j}\right)\big)^c\right).
\]
We observe that, for any $x$, 
\begin{align*}
    \pr\big[ \{B_x \le h < B_x+L_x\}^c \cap \{B_x \le 2h < B_x+L_x\}^c \big] = 1-\frac{2-e^{- h}}{1+ T}.
\end{align*}

Note that the number of points in each set $\mcT_{J}^r$ is Poisson distributed and their birth and death times are independent. Using the moment generating function formula for the Poisson distribution, we can calculate   
\begin{align*}
    \pr[A^*|\bar\mcT] &= e^{-n\bbQ\left[\bigcup\limits_{i=1}^4 N_r(\mcT_i) \right] -n(1-e^{- h})\bbQ\left[ \bigcup_{J\in\mc K} \mcT_J^r  \right]}, 
\end{align*}
where 
\[ \mc K\coloneqq\big\{\{1,3\},\{1,4\},\{2,3\},\{2,4\},\{1,2,3\},\{1,2,4\},\{1,3,4\},\{2,3,4\},\{1,2,3,4\} \big\}.\]
Using the independence of $\tau$ and $\tau^*$, along with \eqref{Egkr2}, we have
\begin{align*}
    \left|\ex[g_h(\bar{\mcT},\wh\mcP_{n(1+ T)})]\right| &\leq \ex\left[\left|g_h(\bar{\mcT},\wh\mcP_{n(1+ T)})\right|\right] \\  &= \ex\left[\frac{\prod_{i=1}^4 \wt\xi_{k,r}(\mcT_i)}{(1+ T)^{|\cI|}} \left|\sum_{j=1}^8 E_1^j(h)D_1^j(r,h)  - E_2^j(h)D_2^j(r,h) \right|  \right],
\end{align*} where
$E_1^j(h)$ and $E_2^j(h)$, are the terms in \eqref{E1=} and \eqref{E2=}, respectively, and
\begin{align}
    D_\ell^j(r,h) &\coloneqq e^{-n\bbQ\left[\bigcup\limits_{i=1}^4 N_r(\mcT_i) \right] -n(1-e^{- h})\bbQ\left[ \bigcup_{J\in\mc K^h_{\ell,j}} \mcT_J^r  \right]  -n(1-e^{-2 h})\bbQ\left[ \bigcup_{J\in\mc K^{2h}_{\ell,j}} \mcT_J^r  \right]  }, \label{D1j} 
\end{align}
for suitably defined $\mc K^h_{\ell,j}$ and $\mc K^{2h}_{\ell,j}$. Note that
$   E_1^j(0) = E_2^j(0) = 1$, and
$    D_1^j(r,0)=D_2^j(r,0) = e^{-n \bbQ\left[N_r(\bar\mcT) \right]}$, where we use the shorthand notation,
\begin{align*}
    \bbQ\left[N_r(\bar\mcT) \right] \coloneqq \bbQ\left[\bigcup\limits_{i=1}^4 N_r(\mcT_i) \right].
\end{align*}
 Therefore,
 \begin{align}
    &\frac{\del}{\del h}\sum_{j=1}^8 E_1^j(h)D_1^j(r,h)  - E_2^j(h)D_2^j(r,h)\bigg|_{h=0} \notag\\
    &\qquad = \sum_{j=1}^8 \left(\frac{\del D_1^j(r,h)}{\del h}\bigg|_{h=0}  - \frac{\del D_2^j(r,h)}{\del h}\bigg|_{h=0} \right) + \sum_{j=1}^8 \left(\frac{\del E_1^j(h)}{\del h}\bigg|_{h=0}  - \frac{\del E_2^j(h)}{\del h}\bigg|_{h=0} \right). \notag
 \end{align}

Note that the second sum here
is equal to $\Phi'_\cI(0)=0$, and using the symmetry between the definitions of $E_{1,2}^j(h)$ and $D_{1,2}^j(r,h)$, it can be verified that the first sum is zero as well. Therefore, using the Taylor's theorem as in \eqref{PhicI}, it follows that there exists a $K_\cI(r, \bar{\mcT})$ for which 
\begin{align*}
    \sum_{j=1}^8 \left(E_1^j(h)D_1^j(r,h)  - E_2^j(h)D_2^j(r,h)\right) \leq K_\cI(r, \bar{\mcT}) h^2
\end{align*}
and an $h^*\in[0,h]$ such that
\begin{align*}
    K_\cI(r,\bar{\mcT}) = \frac{\del^2}{\del h^2}\sum_{j=1}^8 E_1^j(h)D_1^j(r,h)  - E_2^j(h)D_2^j(r,h)\bigg|_{h=h^*}.
\end{align*}
From \eqref{D1j}, we note that 
\begin{align*}
     \frac{\del D_\ell^j(r,h)}{\del h} &\leq e^{-n\bbQ\left[N_r(\bar\mcT) \right]} \left( n \bbQ\Bigg[ \bigcup_{J\in\mc K^h_{\ell,j}} \mcT_J^r\Bigg] + 2n\bbQ\Bigg[ \bigcup_{J\in\mc K^{2h}_{\ell,j}} \mcT_J^r  \Bigg]\right) \\
    &\leq 3 e^{-n\bbQ\left[N_r(\bar\mcT) \right]} n\bbQ\left[N_r(\bar\mcT) \right] \\
    &\leq 3e^{-1},
\end{align*}    
and
\begin{align*}     \frac{\del^2 D_\ell^j(r,h)}{\del h^2} &\leq 4e^{-n\bbQ\left[N_r(\bar\mcT) \right]} n\bbQ\left[N_r(\bar\mcT) \right]\left(n\bbQ\left[N_r(\bar\mcT) \right]+1\right)\leq 4,
\end{align*}
for all $h\geq 0$, $\ell\in\{1,2\}$, and $j\in \{1,\ldots, 8\}$. Similar bounds hold for the first two derivatives of $E_1^j(h)$ and $E_2^j(h)$. Therefore,
\begin{align*}
   K_\cI(r,\bar{\mcT}) \leq K_\cI  
\end{align*}
 almost surely, for some finite constant $K_\cI$, completing the proof. 
\end{proof}

Finally, we prove the corresponding bound for $\overline g_h(\bar{\mcT},\wh\mcP_{n(1+ T)})$, as follows. 
\begin{lemma}\label{lem:F2mom}
Let the setting and notation be as in \reflem{lem:PhiF}, and $\overline g_h$ as defined in \eqref{tilgdef}. Then
\[
\left|\ex\big[\overline g_h(\bar{\mcT},\wh\mcP_{n(1+ T)})\big]\right| \leq \frac{K_\cI}{(1+ T)^{|\cI|}}\,   \ex\big[\xi_{k,r}(\mcT_1) \xi_{k,r}(\mcT_2) \zeta_r(\mcT_1, \mcT_{2}) \big]\, h, 
\]
for some finite constant $K_\cI$.
\end{lemma}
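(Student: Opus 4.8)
The plan is to follow the proof of \reflem{lem:PhiF}, specialized to the two-set, single-increment setting of $\overline g_h$ from \eqref{tilgdef}, where the argument simplifies considerably. Writing, for $i\in\{1,2\}$ and $t\in\{0,h\}$,
\[
\sigma_t(\mcT_i) := \tau_t(\mcT_i)\,\tau^*_t\big(\wh\mcP_{n(1+ T)}\cap N_r(\mcT_i)\big),
\]
the definition of $\tau_{0,h}$ gives $\tau_{0,h}(\mcT_i,\wh\mcP_{n(1+ T)}) = \sigma_h(\mcT_i)-\sigma_0(\mcT_i)$, so that
\[
\tau_{0,h}(\mcT_1,\cdot)\,\tau_{0,h}(\mcT_2,\cdot) = \sigma_h(\mcT_1)\sigma_h(\mcT_2)-\sigma_h(\mcT_1)\sigma_0(\mcT_2)-\sigma_0(\mcT_1)\sigma_h(\mcT_2)+\sigma_0(\mcT_1)\sigma_0(\mcT_2).
\]
First I would compute the expectation of each of these four terms conditional on the spatial configuration $\bar\mcT$, exactly as in \reflem{lem:PhiF}: decompose the neighborhoods into the disjoint pieces $\mcT^r_J$, $J\subset\{1,2\}$, use independence of the birth--lifetime marks across distinct points, and apply the Poisson void-probability (moment generating function) formula on each piece.

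This reduces the conditional expectation to a combination of exponentials $\exp\{-n\bbQ[\,\cdot\,]\}$ whose exponents are affine in $h$-dependent quantities such as $e^{-h}$. Denoting the resulting factor by $\Psi_\cI(r,\bar\mcT;h)$, I would obtain
\[
\ex\big[\overline g_h(\bar{\mcT},\wh\mcP_{n(1+ T)})\,\big|\,\bar\mcT\big] = \frac{1}{(1+ T)^{|\cI|}}\,\xi_{k,r}(\mcT_1)\,\xi_{k,r}(\mcT_2)\,\Psi_\cI(r,\bar\mcT;h),
\]
where $\Psi_\cI$ is smooth in $h$ with derivatives bounded uniformly in $r$ and $\bar\mcT$; the prefactors $n\bbQ[\,\cdot\,]\,e^{-n\bbQ[\,\cdot\,]}$ arising upon differentiation are controlled by $\sup_{x\ge0}xe^{-x}=e^{-1}$, just as in \reflem{lem:PhiF}.

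The decisive and simplifying observation is that, by stationarity, each single increment is centered: $\ex[\tau_{0,h}(\mcT_i,\cdot)\,|\,\bar\mcT] = 0$, since $\ex[\sigma_t(\mcT_i)\,|\,\bar\mcT]$ does not depend on $t$. Hence $\Psi_\cI(r,\bar\mcT;0)=0$, and since $\Psi_\cI''$ is bounded, Taylor's theorem with Lagrange remainder yields $|\Psi_\cI(r,\bar\mcT;h)|\le K_\cI\,h$ uniformly. This is precisely where the two lemmas diverge: in \reflem{lem:PhiF} both the value and the first derivative at $h=0$ cancel, producing the $h^2$ bound, whereas here only the value vanishes, so the first-order term survives and the bound is linear in $h$. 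Finally, if $N_r(\mcT_1)\cap N_r(\mcT_2)=\varnothing$ then the two increments depend on disjoint collections of marks and are thus independent, so $\Psi_\cI(r,\bar\mcT;h)\equiv0$ (the same spatial-independence mechanism used in \reflem{tightF}); together with the locality of $\xi_{k,r}$ from \refass{localass}, which forces $\diam(\mcT_i)\le\rmax r$ on the support of $\xi_{k,r}(\mcT_i)$, this lets me insert the indicator $\zeta_r(\mcT_1,\mcT_2)$ and bound $\xi_{k,r}(\mcT_1)\xi_{k,r}(\mcT_2)\,|\Psi_\cI|\le K_\cI\,\xi_{k,r}(\mcT_1)\xi_{k,r}(\mcT_2)\,\zeta_r(\mcT_1,\mcT_2)\,h$. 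Taking the outer expectation over $\bar\mcT$ then gives the claim.

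The main obstacle is the bookkeeping of the vacancy constraints on the Poisson points lying in the overlap $N_r(\mcT_1)\cap N_r(\mcT_2)$, where the two $\tau^*$ factors interact and must be combined into a single void probability; this is handled by the same $\mcT^r_J$ decomposition used in \reflem{lem:PhiF}. The essential relief compared to that lemma is that here I need only verify $\Psi_\cI(r,\bar\mcT;0)=0$, which is immediate from stationarity, rather than the more delicate cancellation of the first derivative, so the present argument is genuinely lighter.
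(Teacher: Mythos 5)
Your proposal is correct and follows essentially the same route as the paper: condition on $\bar\mcT$, compute the mark and void probabilities via the $\mcT^r_J$ decomposition and the Poisson moment generating function, note that the resulting factor vanishes at $h=0$ but---unlike in \reflem{lem:PhiF}---its first derivative does not, conclude a linear-in-$h$ bound by Taylor's theorem, and insert $\zeta_r$ through the spatial-independence of the centered increments. One precision to fix: since only the value at $h=0$ vanishes, the linear bound $|\Psi_\cI|\le K_\cI h$ requires a uniform bound on the \emph{first} derivative of $\Psi_\cI$ (the paper obtains $I_{1,2}+e^{-1}$ via $\sup_{x\ge 0}x e^{-x}=e^{-1}$), not on $\Psi_\cI''$ as you wrote; boundedness of the second derivative alone does not give a bound linear in $h$ unless $\Psi_\cI'$ is also controlled.
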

\begin{proof}
As in the proofs of previous two lemmas, we can write
\begin{align*}
    \ex\big[\overline g_h(\bar{\mcT},\wh\mcP_{n(1+ T)})\big] = \ex\big[\xi_{k,r}(\mcT_1) \xi_{k,r}(\mcT_2)\zeta_r(\mcT_1, \mcT_{2})(\check{E}_1-\check{E}_2)\big],
\end{align*}
with 
\begin{align*}
    \check{E}_1\coloneqq &\tau_h(\mcT_1) \tau^*_h\big(\wh\mcP_{n(1+ T)}\cap N_r(\mcT_1)\big) \tau_h(\mcT_2) \tau^*_h\big(\wh\mcP_{n(1+ T)}\cap N_r(\mcT_2)\big)\\ & + \tau_0(\mcT_1) \tau^*_0\big(\wh\mcP_{n(1+ T)}\cap N_r(\mcT_1)\big) \tau_0(\mcT_2) \tau^*_0\big(\wh\mcP_{n(1+ T)}\cap N_r(\mcT_2)\big)\\
   \check{E}_2\coloneqq &\tau_0(\mcT_1)\tau^*_0\big( \wh\mcP_{n(1+ T)}\cap N_r(\mcT_1)\big)\tau_h(\mcT_2)\tau^*_h\big(\wh\mcP_{n(1+ T)}\cap N_r(\mcT_2)\big)\\ & + \tau_h(\mcT_1)\tau^*_h\big(\wh\mcP_{n(1+ T)}\cap N_r(\mcT_1)\big)\tau_0(\mcT_2)\tau^*_0\big(\wh\mcP_{n(1+ T)}\cap N_r(\mcT_2)\big).
\end{align*}

We repeat the same ideas in the proof of \reflem{lem:PhiF}. Note this time, due to symmetry, it is enough to calculate the first term of $\check{E}_2$,
\begin{align*}
    &\pr\left[\tau^*_0\big(\wh\mcP_{n(1+ T)}\cap N_r(\mcT_1)\big)=\tau^*_h\big( \wh\mcP_{n(1+ T)}\cap N_r(\mcT_2)\big)=1 \Big|\bar\mcT \right]\\
    &\qquad\qquad=\pr \left[\left(\hspace{-1em}\bigcap_{\hspace{1em}x\in \mcT^r_1 \cap\wh\mcP_{n(1+ T)}}\hspace{-2em} \{B_x > 0 \}\right) \cap \left(\hspace{-1em}\bigcap_{\hspace{1em}x\in \mcT^r_2\cap\wh\mcP_{n(1+ T)}}\hspace{-2em} \{B_x \le h < B_x+L_x\}^c\right)\right.\\
    &\phantom{\pr\left[ \left(\hspace{-1em}\bigcap_{\hspace{1em}x\in \mcT^r_1 \cap\wh\mcP_{n(1+ T)}}\hspace{-2em} \{B_x > 0 \}\right)\right. } \left.\cap\left(\hspace{-1em}\bigcap_{\hspace{1em}x \in \mcT^r_{1,2}\cap\wh\mcP_{n(1+ T)}}\hspace{-2em} \{B_x >0\} \cap \{B_x \le h < B_x+L_x\}^c\right)\Bigg|\bar\mcT\right]\\
    &\qquad\qquad= e^{-n\bbQ\left[ N_r(\mcT_1)\cup N_r(\mcT_2)\right] -n(1-e^{- h})\bbQ\left[ \mcT_{1,2}^r  \right]  }.
\end{align*}
Furthermore,
\begin{align*}
    \pr\left[\tau_0(\mcT_1) =  \tau_h(\mcT_2)=1\right] &= \pr \left[\left(\bigcap_{x\in \mcT_1 } \{B_x = 0 \}\right) \cap \left(\bigcap_{x\in \mcT_2} \{B_x \le h < B_x+L_x\}\right)\right.\\
    & \hspace{6em}\left.\cap\left(\bigcap_{x \in \mcT_{1,2}} \{B_x=0 \le h < B_x+L_x\}\right)\right]\\
    &= \left(\frac{1}{1+ T}\right)^{|\cI|} e^{- h I_{1,2} }.
\end{align*}
Therefore,
\begin{align*}
    &\ex\big[\overline g_h(\bar{\mcT},\wh\mcP_{n(1+ T)})\big] = (1+ T)^{-|\cI|} \nn \\ &\qquad\qquad\times\ex\left[\xi_{k,r}(\mcT_1) \xi_{k,r}(\mcT_2) \zeta_r(\mcT_1, \mcT_{2}) e^{-n\bbQ\left[N_r(\bar\mcT)\right]} \left( 1 - e^{- h I_{1,2} } e^{ -n (1-e^{- h})\bbQ\left[ \mcT_{1,2}^r  \right]  } \right)\right].
\end{align*}
Noting that, for $h=0$, the expression inside the expectation is zero,  
and 
\begin{align*}
    \frac{\del}{\del h} e^{-n\bbQ\left[N_r(\bar\mcT)\right]} \left(1 - e^{- h I_{1,2} }e^{ -n (1-e^{- h})\bbQ\left[ \mcT_{1,2}^r  \right]  } \right)\Bigg|_{h=h^*}&\leq  e^{-n\bbQ\left[N_r(\bar\mcT)\right]}\left(I_{1,2} + n \bbQ\left[ \mcT_{1,2}^r  \right]\right)\\
    &\leq I_{1,2} + e^{-n\bbQ\left[N_r(\bar\mcT)\right]} n\bbQ\left[N_r(\bar\mcT)\right] \\
    &\leq I_{1,2} + e^{-1}
\end{align*}
for all $h^*\geq 0$. Therefore, applying Taylor's theorem, we conclude that almost surely
\begin{align*}
    e^{-n\bbQ\left[N_r(\bar\mcT)\right]} \left(1 - e^{- h I_{1,2} }e^{ -n (1-e^{- h})\bbQ\left[ \mcT_{1,2}^r  \right]  } \right) \leq K_\cI' h,  
\end{align*}
for some constant $K_\cI'$. This completes the proof. 
\end{proof}

\section{}\label{alphaproof}
\sec
In this section we prove the asymptotic identities \eqref{alphaasymp} used in the proof of \reflem{covgen2}.
\begin{proof}[Proof of \eqref{alphaasymp}]
Note that 
\begin{align}
    \alpha_{k,0}^\Delta- \alpha_{k,0}^\infty =   \ex\big[\xi_{k,r}(\mb{X}) 
    \xi_{k,r}(\mb{X}')\nu_r(\mb{X},\mb{X}')  \big], \label{alphdiff}
\end{align}
where $\mb X$ and $\mb X'$ are each  a $k$-tuple of iid points, as defined before, and
\begin{align}
    \nu_r(\mb{x};\mb{x}') \coloneqq e^{-n\left(\bbQ\left[N_r(\mb{x})\right]+\bbQ\left[ N_r(\mb{x}')\right]\right) +ne^{-\Delta}\bbQ\left[N_r(\mb{x})\cap N_r(\mb{x}')\right]}
    - e^{-n\left(\bbQ\left[N_r(\mb{x})\right]+\bbQ\left[ N_r(\mb{x}')\right]\right)}. \label{tildezeta}
\end{align}
 Note that $\nu_r$ is not translation and scale invariant, and so \reflem{lemgeom} is not directly applicable for calculating the asymptotics of \eqref{alphdiff}. However, we can adapt the proof of \reflem{lemgeom} to get the desired result, as follows. By a change of variables, and using the translation and scale invariance of $\xi_{k,r}$ we can write,
 \begin{align}
 \begin{split}
     \frac{\alpha_{k,0}^\Delta- \alpha_{k,0}^\infty}{r^{d(2k-1)}} = &\int_{(\Rd)^{2k}}  \xi_{k,1}(0,\mb{y}) 
    \xi_{k,1}(\mb{y}')
    \nu_r\big(x+r(0,\mb{y});x+r \mb{y}'\big) q(x)\der x\\&\times\prod_{i=1}^{2k-1}q(x+ry_i)\der y_i.
    \end{split}\label{intrdk}
 \end{align}
 for $(0,\mb y) = (0,y_1,\ldots,y_{k-1})$ and $\mb y' = (y_{k},\ldots, y_{2k-1})$. Now note that, if 
 \[N_r\big(x+r(0,\mb{y})\big)\cap N_r\big(x+r\mb{y}'\big) = \varnothing,
 \]
 then
 \[\nu_r\big(x+r(0,\mb{y}); x+r\mb{y}'\big)=0.\] 
 However, due to affine invariance of $N_r$, the former is equivalent to $N_1(0,\mb{y})\cap N_1(\mb{y}') = \varnothing$. Using this observation, the localness of $\xi_{k,1}$, and the fact that $0\leq\nu_r\leq 1$ almost surely, the integrand in \eqref{intrdk} can be  bounded above by
 \begin{align*}
     \|\xi_k\|_{\infty}^2 \big(\|q\|_{\infty}\big)^{2k}  \ind \big\{\{y_1,\ldots,y_{2k-1}\} \subset B\big(0,2\beta_k\max(\rmax, 1)\big)\big\},
 \end{align*}
 which is integrable. Therefore, the  dominated convergence theorem is applicable to the integration in \eqref{intrdk}. Note that, due to the affine invariance of $N_r$,
 \begin{align*}
     \bbQ\big[N_r\big(x+r(0,\mb{y})\big)\big] &=  \int_{\Rd} q(u)\ind\big\{u\in x+rN_1(0, \mb{y})\big\}\der u\nn\\
     & =  r^d \int_{\Rd} q(x+rz)\ind\big\{z\in N_1(0, \mb{y})\big\}\der z,
 \end{align*}
 where we used the change of variable, $u=x+rz$, again.
 Using the dominated convergence theorem, and $\lim\limits_{n\to\infty}nr^d=\gamma$,
 \begin{align*}
     \lim_{n\to \infty} n \bbQ\big[N_r\big(x+r(0,\mb{y})\big)\big] = \gamma q(x) \vol[N_1(0, \mb{y})].
 \end{align*}
 Similarly,
  \[   \lim_{n\to \infty} n \bbQ\big[N_r\big(x+r\robert{\mb{y}'})\big)\big] = \gamma q(x) \vol[N_1(\mb{y}')],
     \]
     and
     \[ \lim_{n\to \infty} n \bbQ\big[N_r\big(x+r(0,\mb{y})\big)\cap N_r\big(x+r\robert{\mb{y}'}\big) \big] = \gamma q(x) \vol\left[N_1(0, \mb{y})\cap N_1(\mb{y}')\right].
 \]
 From \eqref{tildezeta},
 \begin{align*}
     \lim_{n\to \infty} \nu_r\big(x+r(0,\mb{y}); x+r\robert{\mb{y}'}\big)=&e^{-\gamma q(x) \left(\vol\left[N_1(0,\mb{y})\right] + \vol\left[N_1(\mb{y}')\right]\right)}\nn\\
     &\times\left(e^{\gamma q(x) e^{-\Delta} \vol\left[N_1(0,\mb{y})\cap N_1(\mb{y}')\right]}-1\right).
 \end{align*}
 Therefore, from \eqref{intrdk}, applying  dominated convergence and the almost everywhere continuity of $q$,
\begin{align*}
\begin{split}
    \lim_{n\to\infty}r^{-d(2k-1)}(\alpha_{k,0}^\Delta- \alpha_{k,0}^\infty)
    &\hspace{-2pt}=  \hspace{-6pt}\int_{(\Rd)^{2k}}\xi_{k,1}(0,\mb{y})\xi_{k,1}(\mb{y}') [q(x)]^{2k}
    e^{-\gamma q(x) \left(\vol\left[N_1(0,\mb{y})\right] + \vol\left[N_1(\mb{y}')\right]\right)}
    \\
    &  \times\left(e^{\gamma q(x) e^{-\Delta} \vol\left[N_1(0,\mb{y})\cap N_1(\mb{y}')\right]}-1\right)\der x \prod_{i=1}^{2k-1} \der y_i.
    \end{split}
\end{align*}
Replacing the second exponential term  with its power series expansion, interchanging the sum and the integral by Fubini, and setting 
\robert{
\begin{align*}
\begin{split}
    \kappa_{k,0,l} \coloneqq &\int_{(\Rd)^{2k}} \xi_{k,1}(0,\mb{y}) \xi_{k,1}(\mb{y}')  [q(x)]^{2k+l} \\
    &\times e^{-\gamma q(x)\left(\vol\left[N_1(0,\mb{y})\right] + \vol\left[N_1(\mb{y}')\right]\right)}
      \left(\vol\left[N_1(0,\mb{y})\cap N_1(\mb{y}')\right]\right)^l \der x\prod_{i=1}^{2k-1} \der y_i,
      \end{split}
\end{align*}
 we establish the first part of \eqref{alphaasymp}.
In particular, note that
\begin{align}
\begin{split}\label{kappak0l}
     \kappa_{k,0,l} &\geq e^{-2\gamma V_d [\beta_k\max(1,\rmax)]^d \|q\|_{\infty}} \int_{\Rd}[q(x)]^{2k+l}\der x \\
     &\times \int_{(\Rd)^{2k-1}} \xi_{k,1}(0, \mb y) \xi_{k,1}(\mb y') \left(\vol\left[N_1(0,\mb{y})\cap N_1(\mb{y}')\right]\right)^l \prod_{i=1}^{2k-1}\der y_i,
     \end{split}
\end{align}
\robert{where $V_d$ denotes the Lebesgue volume of the unit ball in $\Rd$.
The latter term is positive
due to Assumptions \ref{localass}, \ref{loc2} and \ref{jointfeas}. }
Proving the second part of \eqref{alphaasymp}, follows exactly the same steps, by taking
\begin{align}\label{kappakjl}
\begin{split}
    \kappa_{k,j,l} \coloneqq &\int_{(\Rd)^{2k-j}} \xi_{k,1}(0,\mb{y}) \xi_{k,1}(\mb{y}')  [q(x)]^{2k-j+l} \\
    &\times e^{-\gamma q(x)\left(\vol\left[N_1(0,\mb{y})\right] + \vol\left[N_1(\mb{y}')\right]\right)}
      \left(\vol\left[N_1(0,\mb{y})\cap N_1(\mb{y}')\right]\right)^l \der x\prod_{i=1}^{2k-1-j} \der y_i.
      \end{split}
\end{align}
We therefore omit the details here.
}
\end{proof}

\end{appendix}


\bibliographystyle{imsart-number}
\bibliography{references}       

\end{document}